\documentclass[pdflatex,sn-mathphys-ay]{sn-jnl}

\usepackage{graphicx}%
\usepackage{multirow}%
\usepackage{amsmath,amssymb,amsfonts}%
\usepackage{amsthm}%
\usepackage{mathrsfs}%
\usepackage[title]{appendix}%
\usepackage{xcolor}%
\usepackage{textcomp}%
\usepackage{manyfoot}%
\usepackage{booktabs}%
\usepackage{cleveref}
\usepackage{algorithm}%
\usepackage{algorithmicx}%
\usepackage{algpseudocode}%
\usepackage{listings}%

\numberwithin{equation}{subsection}

\newtheorem{theorem}{Theorem}[section]
\newtheorem{proposition}[theorem]{Proposition}
\newtheorem{lemma}[theorem]{Lemma}
\newtheorem{corollary}[theorem]{Corollary}
\newtheorem{assumption}[theorem]{Assumption}
\theoremstyle{definition}
\newtheorem{definition}[theorem]{Definition}
\theoremstyle{remark}
\newtheorem{remark}[theorem]{Remark}

\definecolor{waveblue}{RGB}{0,114,178}
\definecolor{scopeorange}{RGB}{213,94,0}

\newcommand{\R}{\mathbb R}
\newcommand{\T}{\mathbb T}
\newcommand{\Om}{\Omega}
\newcommand{\eps}{\varepsilon}
\newcommand{\pa}{\partial}
\newcommand{\dd}{\,\mathrm d}
\newcommand{\Div}{\operatorname{div}}
\newcommand{\cE}{\mathcal E}
\newcommand{\cR}{\mathcal R}
\newcommand{\cH}{\mathcal H}
\newcommand{\cX}{\mathcal X}
\newcommand{\cG}{\mathcal G}
\newcommand{\ip}[2]{\langle #1,#2\rangle}
\newcommand{\norm}[1]{\left\lVert #1\right\rVert}
\newcommand{\abs}[1]{\left\lvert #1\right\rvert}
\newcommand{\Nr}{N^{r}}
\newcommand{\Vr}{V^{r}}

\begin{document}

\title[Smooth expanding waves for Euler--Poisson]{Smooth expanding planar simple waves for Euler--Poisson--Boltzmann: uniform stability and quasineutral expansion}

\author*[1]{\fnm{Louis Shuo} \sur{Wang}}\email{wang.s41@northeastern.edu}
\equalcont{These authors contributed equally to this work as co-first authors.}

\author[2]{\fnm{Jiguang} \sur{Yu}}\email{jyu678@bu.edu}
\equalcont{These authors contributed equally to this work as co-first authors.}

\affil*[1]{\orgdiv{Department of Mathematics}, \orgname{Northeastern University}, \orgaddress{\city{Boston}, \state{MA}, \postcode{02115}, \country{USA}}}

\affil[2]{\orgdiv{College of Engineering}, \orgname{Boston University}, \orgaddress{\city{Boston}, \state{MA}, \postcode{02215}, \country{USA}}}

\abstract{
We study the warm-ion Euler--Poisson system with Maxwell--Boltzmann
electrons on the cylinder $\R\times\T$ in the quasineutral regime. 
Taking a smooth expanding planar simple wave of the effective
Euler system as the reference state, we construct an even Debye expansion 
through arbitrary finite order $M$ with a residual of $O(\eps^{2M+2})$. 
We rigorously establish nonlinear stability on every fixed interval $[t_0,T]$ 
($t_0\ge0$), achieving a lifespan and energy constants strictly independent 
of the Debye length $0<\eps\le\eps_0$. To overcome the singular scaling of 
the electric field, we develop a novel compensated energy topology that couples 
the warm-ion symmetrizer to the time-differentiated nonlinear Poisson constraint. 
By integrating the top-order electric work directly into the time derivative 
of a weighted energy functional, this mechanism controls the potential in 
$H^s$ and its gradient in $\eps H^s$, completely eliminating the $\eps^{-1}$ 
loss typically encountered in the momentum equation. This framework successfully 
governs distinct neutral end states, captures genuinely two-dimensional rotational 
perturbations, and yields an arbitrary-order quasineutral asymptotic expansion 
for prepared data, providing a critical analytical foundation for the geometric 
theory of multidimensional quasineutral rarefactions.}

\keywords{Euler--Poisson--Boltzmann system, smooth planar simple wave, expanding wave, quasineutral limit, compensated energy, singular perturbation}

\pacs[MSC Classification]{35Q35, 35L65, 35B35, 35B40, 35J60}

\maketitle

\section{Introduction}

We study the ion Euler--Poisson--Boltzmann system
\begin{align}
  \pa_t n+\Div(nu)&=0, \label{eq:EP-mass}\\
  \pa_tu+u\cdot\nabla u+\nabla h_i(n)&=-\nabla\phi,
  \qquad h_i'(n)=\frac{p_i'(n)}{n}, \label{eq:EP-momentum}\\
  \eps^2\Delta\phi&=e^\phi-n,
  \qquad 0<\eps\le1, \label{eq:EP-poisson}
\end{align}
on $\Om=\R_{x_1}\times\T_{x_2}$.  The unknowns are the ion density
$n>0$, velocity $u=(u^1,u^2)$, and electrostatic potential $\phi$.
Maxwell--Boltzmann electrons make distinct neutral end states admissible:
\[
  (n,u,\phi)(t,x)\longrightarrow
  (n_\pm,(u_\pm,0),\log n_\pm)
  \quad\text{as }x_1\to\pm\infty.
\]

Formally setting $\eps=0$ gives $\phi=\log n$ and the effective isentropic
Euler system
\begin{align}
  \pa_tN+\Div(NU)&=0, \label{eq:QE-mass}\\
  \pa_tU+U\cdot\nabla U+\nabla q(N)&=0,
  \qquad q'(N)=h_i'(N)+N^{-1}. \label{eq:QE-momentum}
\end{align}
Its sound speed is
\begin{equation}
  c_q(N)^2=Nq'(N)=p_i'(N)+1. \label{eq:cq}
\end{equation}
The additional $1$ is the electron pressure generated by the Boltzmann law.

This paper rigorously justifies the quasineutral approximation near a nonconstant expanding state, advancing the theory beyond classical frameworks restricted to constant equilibria. We utilize a globally smooth planar simple wave $(\Nr,\Vr,0)$ of
\eqref{eq:QE-mass}--\eqref{eq:QE-momentum}, obtained from monotone smooth
Burgers data, on an arbitrary fixed interval $[t_0,T]$ with $t_0\ge0$. We construct a high-order composite solution of
\eqref{eq:EP-mass}--\eqref{eq:EP-poisson},
\begin{equation}
  (n^a,u^a,\phi^a)
  =(\Nr,(\Vr,0),\log\Nr)
  +\sum_{j=1}^{M}\eps^{2j}(N_j,U_j,\Phi_j), \label{eq:composite-intro}
\end{equation}
and establish a uniform $\eps$-independent perturbation estimate to guarantee its stability.

\section{Literature Review}\label{sec:literature}

\subsection{Quasineutral limits and plasma asymptotics}

The classical strong-solution theory begins with
\cite{cordier2000quasineutral}, who established the foundation for the ion 
Euler--Poisson quasineutral limit in one space dimension using singular 
pseudodifferential energy estimates. Periodic multidimensional regimes, 
yielding incompressible Euler or Navier--Stokes limits, were advanced by \cite{wang2005quasineutral}; furthermore, rigorous non-isentropic expansions 
for prepared data were developed by \cite{peng2006quasi}. 
Compressible two-fluid and bipolar variants appear in 
\cite{ju2010quasi1,jiang2010quasi2}, while recent relative-energy arguments 
successfully govern simultaneous zero-electron-mass and quasineutral limits 
\citep{alves2024zero}. The cold-ion problem introduces fundamentally 
different structural challenges because the fluid part ceases to be Friedrichs 
symmetrizable; weighted approaches resolving that distinct setting were 
developed in \cite{pu2014quasineutral1,pu2016quasineutral2}. Strong magnetic-field and quasineutral 
scalings are treated in \cite{pu2016quasineutral3}.

The modulated- and relative-energy viewpoint possesses a rich parallel kinetic 
history. Brenier's quasineutral Vlasov--Poisson limit \citep{brenier2000convergence}, the
Maxwell--Boltzmann massless-electron limit of \cite{han2011quasineutral},
and the Wasserstein weak--strong estimates of
\cite{han2014quasineutral,han2017quasineutral} demonstrate the breadth of
stability mechanisms available at the kinetic level. These foundational works 
motivate the use of relative comparisons with a smooth limit flow. However, 
their phase-space metrics and limiting systems do not capture the singular 
top-order fluid compensation engineered in the present work.

\subsection{Boundaries and asymptotic-preserving formulations}

Bounded plasmas introduce intense physical stiffness at the sheath and 
Debye-layer scales. The center-to-wall analysis of \cite{slemrod2001quasi} and the multidimensional half-space theory of \cite{gerard2013quasineutral,gerard2014quasineutral} masterfully 
resolve boundary mechanisms that complement the unbounded cylindrical domain 
studied here. This physical stiffness also drives the development of 
asymptotic-preserving discretizations: representative breakthroughs include
\cite{crispel2007asymptotic,degond2008analysis,vignal2010boundary,
degond2012numerical,arun2025asymptotic}. This numerical literature
reinforces the critical mathematical need for uniform relative estimates whose 
stability domains do not collapse as the Debye length tends to zero.

\subsection{Rarefaction waves}

The construction and stability of expansion waves form a deeply developed 
line of research. Multidimensional local rarefaction waves for symmetrizable
hyperbolic systems were constructed by \cite{alinhac1989existence}. For
viscous compressible flows, the Burgers smoothing and expansion-weighted
energy methods were pioneered in
\cite{matsumura1986asymptotics,liu1988nonlinear,xin1993zero}. With Poisson coupling,
global rarefaction stability has been proven for Navier--Stokes--Poisson and
Vlasov--Poisson--Boltzmann models
\citep{duan2015stability1,duan2015stability2}; notably, viscosity or kinetic dissipation 
is an essential stabilizing mechanism in those arguments.

For homogeneous multidimensional Euler, Luo--Yu recently established the 
structural stability of the centered Riemann rarefaction, including its 
singular vertex and wave fronts, in \cite{luo2025stability1,luo2025stability2}. The 
current geometric frontier includes the weighted estimates of \cite{he2026extra} 
and the mixed rarefaction--shock--vortex-sheet configurations of
\cite{jia2026multi}. While these works brilliantly resolve the geometric singularities 
of homogeneous hyperbolic conservation laws, they do not contend with the 
singularly perturbed nonlinear Poisson constraint. The present paper bridges 
these domains by establishing stability for the expanding wave under a singular 
Debye limit.

The primary contribution of this work lies in the rigorous synthesis of singular 
perturbation theory with the stability of nonconstant hyperbolic waves. The following 
proposition formalizes the precise structural innovations introduced in this paper.

\begin{proposition}[Methodological scope and structural innovations]\label{prop:novelty-comparison}
\normalfont
Write $\mathrm{N1}$--$\mathrm{N6}$ for the defining features of \Cref{thm:main}:
\begin{enumerate}[label=\textup{(N\arabic*)}]
\item the warm-ion Euler--Poisson system with Maxwell--Boltzmann electrons,
  $p_i'(n)\ge p_*>0$;
\item the boundaryless cylinder $\R\times\T$ and a prescribed smooth
  expanding planar simple wave connecting two distinct neutral end states;
\item fully two-dimensional $H^s$ perturbations, completely removing any 
  irrotationality or zero-vorticity hypothesis;
\item stability under relative perturbations that are strictly independent of 
  $\eps$; $\eps$-preparedness is imposed only to capture the expansion rate;
\item for every finite $M$, the construction of an even Debye composite through 
  order $\eps^{2M}$ yielding a residual of $O(\eps^{2M+2})$;
\item a uniform lifespan on every fixed $[t_0,T]$, $t_0\ge0$, based on a novel 
  compensated topology that directly controls $\norm\psi_{H^s}+\eps\norm{\nabla\psi}_{H^s}$ 
  without $\eps^{-1}$ losses over the distinct end states.
\end{enumerate}
Contextualized against the existing literature, these contributions advance the 
state of the art in the following ways:
\begin{enumerate}[label=\textup{(\alph*)}]
\item \cite{cordier2000quasineutral} laid the rigorous 1D foundation 
  for the ion--Boltzmann quasineutral limit. The present work substantially advances 
  this theory by capturing fully two-dimensional rotational dynamics \textup{(N3)} 
  and simultaneously managing the distinct distinct-end-state geometry \textup{(N2)} 
  over an arbitrary finite-order hierarchy \textup{(N4)}--\textup{(N5)}.

\item \cite{wang2005quasineutral} provided multidimensional uniform-lifespan theories for 
  periodic problems yielding incompressible Euler and Navier--Stokes limits. Our 
  framework successfully addresses the distinct analytical challenges of a 
  compressible flow limit connecting heterogeneous far fields.

\item \cite{peng2006quasi} rigorously justified finite-order 
  expansions for periodic non-isentropic flows. We pioneer this hierarchical expansion 
  for the compressible ion--Boltzmann limit \textup{(N1)}, introducing the relative 
  compensated topology \textup{(N6)} strictly required to handle the dispersing background.

\item \cite{gerard2013quasineutral,gerard2014quasineutral} 
  masterfully resolved the multidimensional half-space problem, where boundary data 
  generate Debye layers. We complement this by solving the Cauchy stability problem 
  for a boundaryless wave \textup{(N2)}, where the analytical stiffness stems entirely 
  from the internal singular electric transition.

\item \cite{slemrod2001quasi} provided a comprehensive 
  center-to-wall asymptotic analysis for bounded plasmas. In contrast, our focus is 
  the Cauchy stability of the class \textup{(N2)}--\textup{(N4)} on an unbounded cylinder, 
  avoiding boundary layers to isolate the wave asymptotics.
\end{enumerate}
In summary, the present paper develops a novel compensated energy topology that 
rigorously isolates the stability of the smooth expanding wave from the singular 
Debye scaling. By establishing uniform nonlinear stability over a heterogeneous 
background, this framework provides a critical analytical stepping stone toward 
the full global geometric theory of multidimensional quasineutral rarefactions.
\end{proposition}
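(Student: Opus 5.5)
This proposition is a comparison statement rather than an analytic estimate. My plan is to verify each item separately. First, I will check that (N1)--(N6) are faithful consequences of \Cref{thm:main}. Then, for (a)--(e), I will check that each cited work lacks at least one of the listed features.

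\textbf{Step 1: the features (N1)--(N6).} I will read each feature off the hypotheses and conclusions of \Cref{thm:main} and the composite construction \eqref{eq:composite-intro}.
\begin{itemize}
\item (N1) comes from the standing structural assumption $p_i'(n)\ge p_*>0$ in \eqref{eq:EP-momentum}. This assumption is what makes the fluid part Friedrichs symmetrizable.
\item (N2) comes from the domain $\Om=\R\times\T$ and the far-field limits $(n_\pm,(u_\pm,0),\log n_\pm)$. These are admissible precisely because the Boltzmann law gives $\phi\to\log n_\pm$.
\item (N3) holds because the perturbation class in \Cref{thm:main} is all of $H^s(\Om)$. No condition is placed on $\pa_1u^2-\pa_2u^1$.
\item (N4) holds because the smallness threshold on the initial relative perturbation is fixed independently of $\eps\in(0,\eps_0]$. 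Preparedness enters only through the $\eps^{2M+2}$ convergence rate.
\item (N5) is the residual bound of the even Debye hierarchy. Only even powers $\eps^{2j}$ appear because $\eps$ enters \eqref{eq:EP-poisson} only through $\eps^2$. Each corrector $(N_j,U_j,\Phi_j)$ then solves a linearized effective Euler system with source, driven by $\Delta\Phi_{j-1}$.
\item (N6) is the statement of the energy estimate, with norm $\norm\psi_{H^s}+\eps\norm{\nabla\psi}_{H^s}$ and constants depending only on $t_0$, $T$, $s$, $M$ and the background wave.
\end{itemize}

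\textbf{Step 2: the comparisons (a)--(e).} For each comparison I will name the feature absent from the cited work.
\begin{itemize}
\item (a) \cite{cordier2000quasineutral} is one-dimensional, so it lacks (N3).
\item (b) \cite{wang2005quasineutral} is periodic with an incompressible limit. It therefore lacks the compressible, distinct-end-state limit in (N2).
\item (c) \cite{peng2006quasi} treats periodic data without a Boltzmann law, so it lacks the combination (N1)--(N2) together with the relative topology (N6).
\item (d) and (e) concern boundary and half-space problems, whereas (N2) is boundaryless.
\end{itemize}

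\textbf{Main obstacle.} The only nontrivial point is (N6), namely that no $\eps^{-1}$ loss occurs. The plan is to time-differentiate the Poisson constraint, which gives $\eps^2\Delta\pa_t\psi=e^{\phi}\pa_t\psi+\Div(nu)$ up to background terms. I will then pair $\pa^\alpha$ of the electric force term $-\nabla\pa^\alpha\psi$ against $n\,\pa^\alpha u$. Integrating by parts and using the mass equation converts this top-order electric work into
\[
-\tfrac12\frac{d}{dt}\int\bigl(e^{\phi}|\pa^\alpha\psi|^2+\eps^2|\nabla\pa^\alpha\psi|^2\bigr)
\]
plus commutators. Since $\nabla\Nr$ decays like $1/(1+t)$ on $[t_0,T]$, the commutators are bounded by the compensated energy. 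Gronwall's inequality then closes the estimate uniformly in $\eps$, and I expect this cancellation to be the crux of the argument.
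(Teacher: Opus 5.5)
Your Steps 1--2 are a fair reading of how the paper supports this proposition. There is no separate proof: the features are read off \Cref{ass:pressure}, \Cref{thm:main}, \Cref{prop:composite}, \Cref{cor:expansion} and \Cref{prop:vorticity}. Two small corrections. Warm ions are \eqref{eq:warm-ion}, not \eqref{eq:EP-momentum}. The hierarchy sources also contain products of lower correctors, and the three-derivative loss per Debye level is what forces the $s+3M+5$ regularity and yields $\cR_\phi^a\in H^{s+1}$, which you need below.

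\textbf{The gap is in your sketch of (N6).} After $-\int n\pa^\alpha v\cdot\nabla\pa^\alpha\psi=\int\Div(n\pa^\alpha v)\,\pa^\alpha\psi$ and the commuted mass equation, you get $\Div(n\pa^\alpha v)=-\pa_t\pa^\alpha r-u\cdot\nabla\pa^\alpha r+\mathcal M_\alpha$. The $\pa_t$ term does produce the compensated time derivative, as you say. The transport term $-\int(u\cdot\nabla\pa^\alpha r)\,\pa^\alpha\psi$, however, is not a commutator. For $\abs\alpha=s$ it carries $s+1$ derivatives of $r$. Moving them off $r$ puts an unweighted $\nabla\pa^\alpha\psi$ in $L^2$, which the energy controls only as $\eps^{-1}\cdot\eps\norm{\nabla\psi}_{H^s}$; this is exactly the loss \eqref{eq:naive-electric-loss}. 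The term appears in any splitting, since it comes from both $v\cdot\nabla r$ and $\Div(ru^a)$. So ``plus commutators bounded by the compensated energy'' is false as stated, and Gronwall does not close uniformly in $\eps$.

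\textbf{The missing idea} is the second cancellation in \Cref{lem:electric-compensation}. Integrate once to $\int\pa^\alpha r\,u\cdot\nabla q_\alpha$, with $q_\alpha=\pa^\alpha\psi$. Substitute the commuted Poisson relation \eqref{eq:commuted-poisson} for $\pa^\alpha r$, and integrate each piece by parts:
\begin{itemize}
\item $-\eps^2\int(u\cdot\nabla q_\alpha)\Delta q_\alpha$ becomes a $\nabla u$-weighted quadratic form in $\eps\nabla q_\alpha$;
\item $\int Aq_\alpha\,u\cdot\nabla q_\alpha=-\frac12\int\Div(Au)q_\alpha^2$ with $A=e^{\phi^a+\psi}$;
\item the commutator/residual piece is moved onto $q_\alpha$, which needs \eqref{eq:grad-C-alpha} and $\cR_\phi^a\in H^{s+1}$.
\end{itemize}
You could instead differentiate the constraint along $\pa_t+u\cdot\nabla$, but the transport must still be absorbed into the quadratic form rather than estimated. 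You also need the time-commutator bound \eqref{eq:time-C-alpha}, which rests on \eqref{eq:poisson-time}.

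Two further slips. First, the pairing must be with $n\pa^\alpha v$, not $n\pa^\alpha u$. The energy is relative, and the extra background pairing $-\int n\pa^\alpha u^a\cdot\nabla\pa^\alpha\psi$ is not part of the relative energy identity. Second, the $1/(1+t)$ decay of $\nabla\Nr$ plays no role on $[t_0,T]$. Only $K_a\in L^1(t_0,T)$ is used, as in \eqref{eq:coefficient-integral}; decay would matter only for the global-in-time extension the paper leaves open.
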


\section{Model}\label{sec:model}

\subsection{Governing equations and relative variables}

We work on the cylinder $\Om=\R\times\T$.  The full warm-ion system is \eqref{eq:EP-mass}--\eqref{eq:EP-poisson}; the electron density is Maxwell--Boltzmann and the far fields are neutral. Setting the Debye length formally to zero gives \eqref{eq:QE-mass}--\eqref{eq:QE-momentum}.  All estimates are written relative to the finite composite
\eqref{eq:composite-intro}.  This subtraction is essential because a wave joining different end states is not itself in a global Sobolev space.

\subsection{The singular electric structure}

Set
\[
  r=n-n^a,\qquad v=u-u^a,\qquad \psi=\phi-\phi^a.
\]
The difference of the two Poisson equations is
\begin{equation}
  -\eps^2\Delta\psi+A^a(\psi)\psi=r+\cR_\phi^a,
  \qquad
  A^a(\psi)=e^{\phi^a}\int_0^1e^{\theta\psi}\,\dd\theta. \label{eq:poisson-difference-intro}
\end{equation}
The operator on the left is uniformly screened.  It yields
\[
  \norm{\psi}_{H^s}+\eps\norm{\nabla\psi}_{H^s}
  \lesssim \norm r_{H^s}+\norm{\cR_\phi^a}_{H^s},
\]
but this estimate alone does not bound $\nabla\psi$ in $H^s$ uniformly.
Indeed, direct use of the displayed graph estimate gives only
\begin{equation}
 \norm{\nabla\psi}_{H^s}
 \lesssim\eps^{-1}\left(
 \norm r_{H^s}+\norm{\cR_\phi^a}_{H^s}\right),
 \label{eq:naive-electric-loss}
\end{equation}
and insertion of this bound into the $H^s$ momentum estimate destroys a
uniform lifespan.  This obstruction is present even though the reference
wave is smooth; it comes from the singular Debye scaling, not from the
geometry of a centered fan.

The top electric term must therefore not be estimated by
Cauchy--Schwarz.  We integrate it by parts, use the differentiated continuity
equation, retain the transport term instead of estimating
$\nabla\pa^\alpha\psi$, and substitute the commuted Poisson relation into
that transport term.  We then differentiate the nonlinear Poisson constraint
in time.  The result is the time derivative of
\[
  \frac12\int_\Om
  \bigl(A\abs{\pa^\alpha\psi}^2
  +\eps^2\abs{\nabla\pa^\alpha\psi}^2\bigr)\,\dd x,
  \qquad A=e^{\phi^a+\psi},
\]
up to tame commutators.  The residual contribution is closed with
$\cR_\phi^a\in H^{s+1}$, while the nonlinear screening commutator is closed
with only $\psi\in H^s$ and
$\eps\nabla\psi\in H^s$.  Thus no step divides by $\eps$.

This compensation, rather than smooth rarefaction stability by itself, is
the structural contribution of the paper.  It simultaneously handles a
nonconstant coefficient $e^{\phi^a+\psi}$, distinct neutral end states, the
transport of the top density derivative, fully two-dimensional perturbations,
and an arbitrary finite Debye hierarchy.  General smooth quasineutral
existence results supply important antecedents, but the estimate proved in
\Cref{lem:electric-compensation} and \Cref{prop:energy} is the new mechanism that makes
the conjunction \textup{(N1)}--\textup{(N6)} possible.

\subsection{Organization and analytical setting}

The remainder follows a theorem--proof format.  This section fixes the model,
state range, residuals, and relative energy.  The principal statements are
collected in \Cref{sec:main-results}.  Their profile, elliptic, energy, and
existence arguments are given in \Cref{sec:proofs};
\Cref{sec:conclusion} separates the result from the centered-fan problem, and
\Cref{sec:appendix} records the tame and cutoff estimates used in the proofs.

\subsection{Pressure and reference states}\label{sec:setting}

Fix integers $s\ge5$ and $M\ge0$.  We impose the following assumptions.

\begin{assumption}[Warm ions, compact state range, and genuine nonlinearity]\label{ass:pressure}
There are $0<n_*<n^*$ and a compact interval
$I^\sharp=[n_*/4,2n^*]\Subset(0,\infty)$ such that
$p_i\in C^{s+3M+6}(I^\sharp)$ and
\begin{equation}
  p_i'(z)\ge p_*>0\qquad(z\in I^\sharp). \label{eq:warm-ion}
\end{equation}
The two neutral end states belong to $(n_*,n^*)$, and the rarefaction curve
joining them stays in a compact subinterval of $[n_*,n^*]$.
The eigenvalues of the one-dimensional effective Euler system in primitive
variables are
\[
  \lambda_\pm(N,V)=V\pm c_q(N),
  \qquad c_q(N)=\sqrt{p_i'(N)+1}.
\]
The end states are joined by a single genuinely nonlinear family, and the
rarefaction strength
\[
  \delta=\abs{n_+-n_-}+\abs{u_+-u_-}
\]
is at most a fixed $\delta_0$.  Smallness of $\delta_0$ is used only to keep
the profile in $[n_*,n^*]$ and to make constants uniform over the chosen
family.
\end{assumption}

\begin{definition}[Profile seminorm]\label{def:profile-seminorm}
For fixed $0\le t_0<T$ and a planar coefficient $f=f(t,x_1)$, set
\[
  \abs f_{\cX_{k,T}}
  =\sum_{a+b\le k}\sup_{t_0\le t\le T}
  \norm{\pa_t^a\pa_1^bf(t)}_{L^\infty(\R)}
  +\sum_{a+b\le k}\norm{\pa_t^a\pa_1^{b+1}f}_{L^2([t_0,T]\times\R)}.
\]
Only derivatives are placed in $L^2$, since the profile has different end
states.
\end{definition}

\subsection{Residuals, energy, and solution class}

For a smooth triple $(n^a,u^a,\phi^a)$ define
\begin{align}
  \cR_n^a&=\pa_tn^a+\Div(n^au^a), \label{eq:Rn}\\
  \cR_u^a&=\pa_tu^a+u^a\cdot\nabla u^a+\nabla h_i(n^a)+\nabla\phi^a, \label{eq:Ru}\\
  \cR_\phi^a&=\eps^2\Delta\phi^a-e^{\phi^a}+n^a. \label{eq:Rphi}
\end{align}
We use the residual norm
\begin{equation}
  \mathfrak R_s^a(t)=
  \norm{\cR_n^a}_{H^s}+\norm{\cR_u^a}_{H^s}
  +\norm{\cR_\phi^a}_{H^{s+1}}+\norm{\pa_t\cR_\phi^a}_{H^s}.
  \label{eq:residual-norm}
\end{equation}
For $W=(r,v,\psi)$, with $n=n^a+r$, set
\begin{equation}
\begin{split}
  \cE_{s,\eps}[W](t)
  ={}&\frac12\sum_{\abs\alpha\le s}\int_\Om
  \bigl(n\abs{\pa^\alpha v}^2
  +h_i'(n)\abs{\pa^\alpha r}^2\bigr)\,\dd x\\
  &+\int_\Om\left\{\frac{\eps^2}{2}\abs{\nabla\psi}^2
  +\cH_e^*(\psi;\phi^a)\right\}\,\dd x\\
  &+\frac12\sum_{1\le\abs\alpha\le s}\int_\Om
  \left(A\abs{\pa^\alpha\psi}^2
  +\eps^2\abs{\nabla\pa^\alpha\psi}^2\right)\,\dd x,
  \qquad A=e^{\phi^a+\psi}, \label{eq:energy-definition}
\end{split}
\end{equation}
where
\begin{equation}
  \cH_e^*(\psi;\phi^a)
  =e^{\phi^a}\bigl(e^\psi(\psi-1)+1\bigr). \label{eq:dual-electron-setting}
\end{equation}
This separation of the zeroth electric entropy from the differentiated
quadratic forms is important: the coefficient of a top derivative is
$e^{\phi^a+\psi}$, whereas the exact undifferentiated entropy is
\eqref{eq:dual-electron-setting}.  Spatial derivatives are used in
\eqref{eq:energy-definition}.  Time derivatives follow from the equations
once the spatial estimate is known.

The natural perturbation space is the $\eps$-dependent graph space
\begin{equation}
  \cG^s_\eps
  =\{(r,v,\psi):r,v,\psi\in H^s(\Om),\ 
  \eps\nabla\psi\in H^s(\Om)\}, \label{eq:graph-space}
\end{equation}
equipped with
$\norm r_{H^s}+\norm v_{H^s}+\norm\psi_{H^s}
+\eps\norm{\nabla\psi}_{H^s}$.  Uniformity below always refers to this
norm.

By a classical perturbative solution on $[t_0,T]$ we mean a triple in
$C([t_0,T];\cG^s_\eps)$ whose fluid variables are in
$C^1([t_0,T];H^{s-1})$, whose potential is in
$C^1([t_0,T];H^{s-1})$ with $\eps\nabla\pa_t\psi\in H^{s-1}$, and which
satisfies the equations pointwise.  Since $s\ge5$ and the space dimension is
two, these regularities are more than sufficient for the products and
characteristic flow used below.

\section{Main Results}\label{sec:main-results}

The first theorem establishes uniform stability for perturbations whose size is
strictly independent of the Debye length. Preparedness is required only to extract
a quantitative asymptotic expansion.

\begin{theorem}[Uniform stability of a smooth expanding simple wave]\label{thm:main}
Let Assumption~\ref{ass:pressure} hold, let $t_0\ge0$ and $T>t_0$ be fixed, and let
$(\Nr,\Vr)$ be the smooth planar simple wave constructed in
\Cref{prop:smooth-simple-wave}.  For every $M\ge0$ there are
$\eps_0,\eta_0>0$ and a composite profile
$(n^a,u^a,\phi^a)$ of the form \eqref{eq:composite-intro} such that the
following holds for $0<\eps\le\eps_0$.

Suppose the initial density and velocity at $t=t_0$ satisfy the far-field
conditions, $n_0(x)\in[n_*/2,3n^*/2]$, and
\[
  \cE_{s,\eps}[r_0,v_0,\psi_0](t_0)\le\eta_0^2,
\]
where $\phi_0=\phi^a(t_0)+\psi_0$ is the unique solution of
\eqref{eq:EP-poisson}.  Then \eqref{eq:EP-mass}--\eqref{eq:EP-poisson}
has a unique classical solution on $[t_0,T]$ with
\[
  (r,v,\psi)\in C([t_0,T];H^s(\Om)),
  \qquad \eps\nabla\psi\in C([t_0,T];H^s(\Om)).
\]
Uniqueness holds among classical solutions with this regularity, positive
density in $[n_*/4,2n^*]$, the prescribed far fields, and values in the
Poisson chart \eqref{eq:poisson-map}; by continuity any classical solution
issued from the stated data lies in that class on a maximal common interval,
where \Cref{lem:difference} applies.
Moreover,
\begin{equation}
  \sup_{t_0\le t\le T}\cE_{s,\eps}(t)
  \le C_{t_0,T}\bigl(\cE_{s,\eps}(t_0)+\eps^{4M+4}\bigr), \label{eq:main-estimate}
\end{equation}
where $C_{t_0,T}$ and the smallness thresholds are independent of $\eps$.
The density stays in $[n_*/4,2n^*]$.
\end{theorem}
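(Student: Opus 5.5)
The proof proceeds in four stages: profile construction, elliptic solvability of the Poisson constraint, a uniform compensated energy estimate, and a continuation argument.

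\textbf{Step 1: the composite profile.} Starting from the smooth simple wave of \Cref{prop:smooth-simple-wave}, the correctors $(N_j,U_j,\Phi_j)$, $1\le j\le M$, are built by inserting \eqref{eq:composite-intro} into \eqref{eq:EP-mass}--\eqref{eq:EP-poisson} and matching powers of $\eps^2$. Matching at order $\eps^{2j}$ gives two pieces of information:
\[
  \Phi_j=N_j/\Nr+F_j(\Nr,N_{<j},\Delta\Phi_{j-1}),
\]
and a linearized effective Euler system for $(N_j,U_j)$ about $(\Nr,\Vr)$. The forcing of that system depends only on earlier correctors and their derivatives, and it has zero data at $t_0$.

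Because the forcing is planar and built from derivatives of the profile, it lies in $L^2$. Linear symmetric hyperbolic theory on $[t_0,T]$ then gives $N_j,U_j\in H^{k}$ with $k$ decreasing in $j$. This loss of derivatives is what the regularity $p_i\in C^{s+3M+6}$ pays for. The profile bounds are measured in the seminorm $\abs{\cdot}_{\cX_{k,T}}$, which puts only derivatives in $L^2$.

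By construction the residuals satisfy $\mathfrak R^a_s(t)\lesssim\eps^{2M+2}$ uniformly on $[t_0,T]$. This includes the bound on $\pa_t\cR_\phi^a$, which comes from differentiating the expansion in time.

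\textbf{Step 2: elliptic graph estimates.} For given $r$, solve \eqref{eq:poisson-difference-intro} for $\psi$. The map $\psi\mapsto -\eps^2\Delta\psi+e^{\phi^a}(e^\psi-1)$ is monotone, and its linearization is uniformly screened since $e^{\phi^a+\psi}\ge c>0$. This gives a unique $\psi$ in a small ball of $\cG^s_\eps$ (the Poisson chart), with
\[
  \norm\psi_{H^s}+\eps\norm{\nabla\psi}_{H^s}\lesssim\norm r_{H^s}+\norm{\cR^a_\phi}_{H^s}.
\]
Differentiating the constraint in time, and using the continuity equation for $\pa_t r$, gives the corresponding bound for $\pa_t\psi$ and $\eps\nabla\pa_t\psi$ in $H^{s-1}$. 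This step also yields local existence for data in $\cG^s_\eps$ at each fixed $\eps$: the Euler equations with $\nabla\psi$ treated as a Lipschitz (possibly $\eps$-dependent) forcing form a symmetric hyperbolic system, which can be solved by iteration.

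\textbf{Step 3: the uniform energy inequality (main obstacle).} The goal is
\[
  \frac{d}{dt}\cE_{s,\eps}\le C\bigl(1+\abs{(\Nr,\Vr)}_{\cX}\bigr)\bigl(\cE_{s,\eps}+\cE_{s,\eps}^{3/2}\bigr)+C\,\mathfrak R^a_s\,\cE_{s,\eps}^{1/2},
\]
with $C$ independent of $\eps$.

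Apply $\pa^\alpha$ to the perturbation system and test with the symmetrizer $(h_i'(n)\pa^\alpha r,\,n\pa^\alpha v)$. The transport and pressure terms are handled by the standard Friedrichs argument with tame commutators from \Cref{sec:appendix}; the constant in this part depends on $\norm{\nabla(n^a,u^a)}_{L^\infty}$.

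The dangerous term is $-\int n\,\pa^\alpha v\cdot\nabla\pa^\alpha\psi$. It must not be estimated by Cauchy--Schwarz, since that would bring in the loss \eqref{eq:naive-electric-loss}. Instead, proceed as in \Cref{lem:electric-compensation}:
\begin{enumerate}
\item integrate by parts to move the gradient onto $n\pa^\alpha v$;
\item replace $\Div(n\pa^\alpha v)$ using the differentiated mass equation, which produces $\pa_t\pa^\alpha r$ plus the transport term $u\cdot\nabla\pa^\alpha r$;
\item substitute the differentiated Poisson relation $\pa^\alpha r=-\eps^2\Delta\pa^\alpha\psi+A\pa^\alpha\psi+\text{commutators}-\pa^\alpha\cR^a_\phi$.
\end{enumerate}
The resulting $\pa_t$ terms assemble into
\[
  \frac{d}{dt}\,\frac12\int\bigl(A\abs{\pa^\alpha\psi}^2+\eps^2\abs{\nabla\pa^\alpha\psi}^2\bigr),
\]
up to terms involving $\pa_tA$, which are controlled by Step 2. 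The transport part $\int u\cdot\nabla\pa^\alpha r\,\pa^\alpha\psi$ is treated the same way: after the substitution, the terms $\eps^2\int u\cdot\nabla\Delta\pa^\alpha\psi\,\pa^\alpha\psi$ integrate by parts into $\eps^2\nabla u$ times $\abs{\nabla\pa^\alpha\psi}^2$, which is bounded by the energy.

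The screening commutator $[\pa^\alpha,A]\psi$ only requires $\psi\in H^s$. The residual terms require $\cR^a_\phi\in H^{s+1}$ and $\pa_t\cR^a_\phi\in H^s$, which are exactly the quantities in \eqref{eq:residual-norm}. The zeroth-order level is handled by the exact electron entropy $\cH_e^*$. Finally, the energy $\cE_{s,\eps}$ is equivalent to the $\cG^s_\eps$ norm squared, uniformly for small perturbations. I expect checking that every commutator closes without a factor $\eps^{-1}$ to be the delicate point of the whole proof.

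\textbf{Step 4: closure.} Gronwall on $[t_0,T]$ gives \eqref{eq:main-estimate}, using that the profile seminorm is finite on this interval and that $\mathfrak R^a_s=O(\eps^{2M+2})$.

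A bootstrap then finishes the argument. Choose $\eta_0$ and $\eps_0$ so that $C_{t_0,T}(\eta_0^2+\eps_0^{4M+4})$ is small. By Sobolev embedding ($s\ge5$, two dimensions), this keeps the density in $[n_*/4,2n^*]$ and $\psi$ inside the Poisson chart. The continuation criterion from Step 2 then extends the local solution to all of $[t_0,T]$.

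Uniqueness follows from an $L^2$-level version of the same compensated estimate applied to the difference of two solutions (\Cref{lem:difference}).
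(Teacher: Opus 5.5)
Your proposal is correct and follows essentially the same route as the paper: the triangular $\eps^2$-hierarchy solved by a symmetrizable linearized effective Euler system, the $\eps$-uniform screened resolver in $\cG^s_\eps$, and the compensated top-order electric identity. That identity integrates the work by parts, uses the commuted mass equation, keeps the transport term, substitutes the commuted Poisson relation and time-differentiates it; you also use the exact electron entropy at order zero and a Gronwall--bootstrap closure with a fixed-$\eps$ continuation criterion, as the paper does. The only deviations are minor: you use monotonicity instead of a contraction for the resolver, iteration instead of Friedrichs cutoffs for fixed-$\eps$ existence, and an $L^2$-level difference estimate, which does suffice for uniqueness, whereas \Cref{lem:difference} is stated at order $s-1$ and also gives Lipschitz dependence in $\cG^{s-1}_\eps$.
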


The main estimate successfully governs initial perturbations independent of $\eps$. If the
data are prepared to the order of the profile, the theorem yields a direct asymptotic
expansion.

\begin{corollary}[Arbitrary-order quasineutral expansion]\label{cor:expansion}
Under the hypotheses of \Cref{thm:main}, assume
\[
  \cE_{s,\eps}(t_0)\le C_0\eps^{4M+4}.
\]
Then
\begin{align}
  &\sup_{t_0\le t\le T}
  \left(
  \norm{n-n^a}_{H^s}+\norm{u-u^a}_{H^s}
  +\norm{\phi-\phi^a}_{H^s}
  +\eps\norm{\nabla(\phi-\phi^a)}_{H^s}
  \right)
  \le C\eps^{2M+2}. \label{eq:expansion-estimate}
\end{align}
In particular, for every $M\ge0$,
\begin{equation}
  \sup_{t_0\le t\le T}
  \left(
    \norm{n-\Nr}_{H^s}+\norm{u-(\Vr,0)}_{H^s}
    +\norm{\phi-\log\Nr}_{H^s}
  \right)\le C\eps^2. \label{eq:leading-rate}
\end{equation}
Here each norm is taken after subtracting the corresponding composite or
simple-wave profile, rendering it finite over the distinct end states.
\end{corollary}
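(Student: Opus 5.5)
The corollary follows directly from \eqref{eq:main-estimate} once we know that $\cE_{s,\eps}$ is equivalent, uniformly in $\eps$, to the squared graph norm of $\cG^s_\eps$. The argument has three steps.

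\textbf{Step 1: prepared data.} Insert $\cE_{s,\eps}(t_0)\le C_0\eps^{4M+4}$ into \eqref{eq:main-estimate}. This is legitimate because, for $\eps\le\eps_0$ small, the hypothesis $\cE_{s,\eps}(t_0)\le\eta_0^2$ of \Cref{thm:main} holds automatically. We obtain
\[
\sup_{t_0\le t\le T}\cE_{s,\eps}(t)\le C_{t_0,T}(C_0+1)\eps^{4M+4}.
\]

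\textbf{Step 2: coercivity of the energy.} We show that $\cE_{s,\eps}$ controls $\norm r_{H^s}^2+\norm v_{H^s}^2+\norm\psi_{H^s}^2+\eps^2\norm{\nabla\psi}_{H^s}^2$ with constants independent of $\eps$.
\begin{itemize}
\item The density lies in $[n_*/4,2n^*]$. Hence $n$ and $h_i'(n)=p_i'(n)/n\ge p_*/(2n^*)$ are bounded below, and the fluid part of the energy bounds $\norm r_{H^s}^2+\norm v_{H^s}^2$.
\item For the potential, $\phi^a=\log\Nr+O(\eps^2)$ is bounded. The energy is small, so by Sobolev embedding ($s\ge5$) $\psi$ is small in $L^\infty$. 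Consequently $A=e^{\phi^a+\psi}$ is bounded above and below, and the differentiated terms control $\sum_{1\le|\alpha|\le s}\bigl(\norm{\pa^\alpha\psi}_{L^2}^2+\eps^2\norm{\nabla\pa^\alpha\psi}_{L^2}^2\bigr)$.
\item The zeroth-order entropy satisfies $\cH_e^*(\psi;\phi^a)\ge c\,\psi^2$ for $|\psi|\le1$, since $e^\psi(\psi-1)+1=\psi^2\int_0^1\theta e^{\theta\psi}\dd\theta$. This supplies $\norm\psi_{L^2}^2$.
\end{itemize}
Taking square roots then gives \eqref{eq:expansion-estimate}.

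\textbf{Step 3: the leading rate.} Use the triangle inequality:
\[
n-\Nr=(n-n^a)+\sum_{j=1}^M\eps^{2j}N_j,
\]
and similarly for $u$ and for $\phi-\log\Nr$. It remains to know that the correctors $N_j,U_j,\Phi_j$ lie in $H^s(\Om)$ uniformly on $[t_0,T]$. This should come from the profile construction: the correctors are built from derivatives of the simple wave, which lie in $L^2$ by \Cref{def:profile-seminorm}, and they decay at the end states. With that, $\norm{\sum_j\eps^{2j}(N_j,U_j,\Phi_j)}_{H^s}\le C\eps^2$. For $M=0$ the composite is the profile itself plus nothing, so there is no gain beyond $\eps^{2}$ in this step, and the required $\eps^2$ bound comes from the $M=0$ instance of \eqref{eq:expansion-estimate}. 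For $M\ge1$ the error term contributes $\eps^{2M+2}\le\eps^2$.

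The only real subtlety is Step 2: the uniform coercivity, and in particular the lower bound on the zeroth electric entropy. Everything else is direct substitution.
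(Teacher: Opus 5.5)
Your proposal is correct and follows the paper's proof: insert the prepared-data bound into \eqref{eq:main-estimate}, pass to the $\cG^s_\eps$ norm by the uniform energy equivalence (your Step 2 is a re-derivation of \Cref{lem:energy-equivalence}), and obtain \eqref{eq:leading-rate} by the triangle inequality using the corrector bounds $Y_j,\Phi_j\in C([t_0,T];H^{\ell_j})$, $\ell_j\ge s+5$, from \Cref{prop:composite}. One point of care: in Step 2, take the $L^\infty$ smallness of $\psi$ from the bootstrap bound \eqref{eq:bootstrap-norm-improvement} in the proof of \Cref{thm:main} (or from \eqref{eq:poisson-estimate}), not from smallness of $\cE_{s,\eps}$, because the latter route already assumes the coercivity you are proving.
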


\begin{remark}[Smooth-wave scope]\label{rem:smooth-wave-scope}
Because the Burgers datum $w_0$ is smooth and monotone, the reference wave is
globally smooth, permitting $t_0=0$. This establishes rigorous stability for the regular 
characteristic geometry shown in \Cref{fig:characteristics}(a). This framework isolates 
the stability of the singular Debye limit from the distinct geometric challenges 
posed by the Riemann vertex and non-smooth fan edges characteristic of the 
centered fan shown in \Cref{fig:characteristics}(b).
\end{remark}

\begin{figure}[htbp]
\centering
\includegraphics[width=\textwidth]{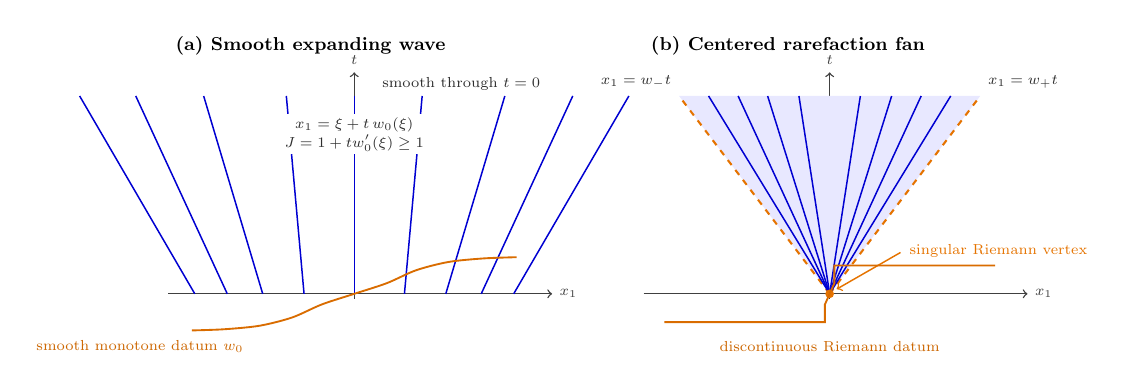}
\caption{Characteristic geometry and scope. The smooth monotone Burgers
datum in panel (a) generates a globally regular characteristic map with
$J=1+t w_0'\ge1$, forming the foundational reference wave for \Cref{thm:main}. 
In contrast, panel (b) illustrates the centered fan, which possesses the same limiting
states but introduces geometric singularities at the Riemann vertex and non-smooth 
edges. \Cref{prop:centered-comparison} quantifies the precise large-time convergence 
between these two regimes.}
\label{fig:characteristics}
\end{figure}

\section{Proofs}\label{sec:proofs}

\subsection{Smooth expanding simple wave and Debye composite}\label{sec:profile}

\subsubsection{A smooth exact simple wave of the effective Euler system}

Let $k\in\{-,+\}$ be the selected characteristic family.  Along its
rarefaction curve the other Riemann invariant is constant and
$\lambda_k(N,V)$ is a smooth coordinate.  Choose a monotone
$w_0\in C^\infty(\R)$ satisfying
\[
  w_0(\pm\infty)=\lambda_k(n_\pm,u_\pm),\qquad
  w_0'\ge0,\qquad
  w_0'\in H^{s+3M+5}(\R),
\]
and, without loss of generality, choose exponential tails
\begin{equation}
 \abs{\pa_1^j(w_0-w_\pm)}\le C_j\Delta w\,e^{-c\abs{x_1}}
 \quad\text{on }\{\pm x_1\ge0\},\qquad
 0\le j\le s+3M+6,\qquad
 \Delta w=w_+-w_->0, \label{eq:w0-tails}
\end{equation}
where $w_\pm=\lambda_k(n_\pm,u_\pm)$.  Such a profile is obtained, for
example, from an affine rescaling of $\tanh x_1$.  We solve
\begin{equation}
  \pa_tw+w\pa_1w=0,\qquad w(0,x_1)=w_0(x_1). \label{eq:burgers}
\end{equation}
The characteristic formula
$w(t,x_1)=w_0(\xi)$, $x_1=\xi+t w_0(\xi)$ gives a global smooth solution.
This Burgers-generated transition, rather than the piecewise self-similar
Riemann fan, is the reference class throughout the paper; we call it a smooth
expanding planar simple wave.  Its expansion is encoded by
$\pa_\xi(\xi+t w_0(\xi))=1+t w_0'(\xi)\ge1$.
Define $(\Nr,\Vr)$ on the rarefaction curve by
\begin{equation}
  \lambda_k(\Nr,\Vr)=w,
  \qquad R_{-k}(\Nr,\Vr)=R_{-k}(n_-,u_-). \label{eq:rare-invariants}
\end{equation}

\begin{proposition}[Smooth expanding planar simple wave]\label{prop:smooth-simple-wave}
The pair $(\Nr,\Vr)$ defined by \eqref{eq:burgers}--\eqref{eq:rare-invariants}
is an exact smooth planar solution of \eqref{eq:QE-mass}--\eqref{eq:QE-momentum}.
It has the prescribed end states, stays in $[n_*,n^*]$, and, for each fixed
$0\le t_0<T$,
\begin{equation}
  \abs{(\Nr,\Vr)}_{\cX_{s+3M+5,T}}\le C_{t_0,T}\delta. \label{eq:rare-bounds}
\end{equation}
The constant is uniform for end states in a fixed compact subset of the
genuinely nonlinear rarefaction regime.
\end{proposition}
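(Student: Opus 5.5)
The plan has three parts: the standard simple-wave reduction via Riemann invariants, end-state and range checks, and a bound on the profile seminorm through the Burgers characteristics.

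\emph{Exactness.} Writing the one-dimensional effective Euler system in Riemann invariants gives
\[
  \pa_t R_\pm+\lambda_\pm\pa_1R_\pm=0,
  \qquad R_\pm=V\pm\int^{N}\frac{c_q(z)}{z}\,\dd z .
\]
By \eqref{eq:rare-invariants} the invariant $R_{-k}$ is constant, so its equation holds trivially. Along the integral curve $\{R_{-k}=\text{const}\}$, genuine nonlinearity of family $k$ (Assumption~\ref{ass:pressure}) makes $N\mapsto\lambda_k$ strictly monotone with derivative bounded away from zero on the compact range. Hence $N=\Nr$ is a smooth function $F(w)$, and $\Vr$ is determined by it. The remaining invariant $R_k$ is then a smooth, strictly monotone function $G(w)$, and
\[
  \pa_tR_k+\lambda_k\pa_1R_k=G'(w)\,(\pa_tw+w\pa_1w)=0
\]
by \eqref{eq:burgers}. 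Since the Riemann-invariant form is equivalent to \eqref{eq:QE-mass}--\eqref{eq:QE-momentum} for smooth planar solutions with $U=(\Vr,0)$, the pair is an exact solution.

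\emph{Global existence, end states and range.} The characteristic map $\xi\mapsto\xi+tw_0(\xi)$ has derivative $1+tw_0'(\xi)\ge1$, so it is a global diffeomorphism of $\R$ for all $t\ge0$ and $w$ is smooth. Since $w$ takes values in $[w_-,w_+]$ with limits $w_\pm$ at $\pm\infty$, $(\Nr,\Vr)$ lies on the rarefaction arc between the end states, has the prescribed limits, and stays in $[n_*,n^*]$ by the assumption on the arc.

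\emph{The bound \eqref{eq:rare-bounds}.} By the chain rule it suffices to bound $w$ in $\cX_{s+3M+5,T}$, because the derivatives of $F$ and of $V(w)$ on the compact arc are bounded uniformly for end states in a fixed compact set. Faà di Bruno bounds each $\pa_t^a\pa_1^b(F(w))$ by sums of products of derivatives of $w$, at least one of which carries a derivative. Those factors contribute the factor $\delta$, since $\Delta w\lesssim\delta$ and $w_0$ is an affine rescaling of a fixed profile, so $w_0^{(j)}=O(\Delta w)$.

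For $w$ itself, time derivatives are converted to spatial ones through $\pa_tw=-w\pa_1w$. Spatial derivatives come from $\pa_1w=w_0'(\xi)/(1+tw_0'(\xi))$, iterated with $\pa_1=(1+tw_0')^{-1}\pa_\xi$. Since $J=1+tw_0'\ge1$, every denominator is at least $1$, and the result is a polynomial in $t\le T$ and in $w_0^{(j)}$, giving the $L^\infty$ bounds. For the $L^2([t_0,T]\times\R)$ part, change variables $\dd x_1=J\,\dd\xi$ with $J\le1+T\norm{w_0'}_{L^\infty}$. This reduces the integrand to $\xi$-space functions controlled by $w_0'\in H^{s+3M+5}$ (together with \eqref{eq:w0-tails}), each carrying at least one factor $w_0^{(j)}$, $j\ge1$. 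Integrating over $t\in[t_0,T]$ then gives a constant $C_{t_0,T}$.

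The main obstacle is bookkeeping the regularity count. One must check that $w_0'\in H^{s+3M+5}$ and the tails up to order $s+3M+6$ suffice for all mixed derivatives with $a+b\le s+3M+5$, plus one extra $\pa_1$ in the $L^2$ term. The first thing to try is to count the total order of $\xi$-derivatives produced, which is $a+b+1$. Everything else is routine because $J\ge1$ prevents any degeneration.
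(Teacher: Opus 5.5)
Your proposal is correct and follows essentially the paper's proof: the global characteristic diffeomorphism with $J=1+tw_0'\ge1$, iteration of $\pa_1=J^{-1}\pa_\xi$ together with $\pa_tw=-w\pa_1w$, the change of variables $\dd x_1=J\,\dd\xi$ for the $L^2([t_0,T]\times\R)$ part, and the chain rule through the smooth inverse on the compact rarefaction arc. Your Riemann-invariant check of exactness ($R_k=G(w)$ transported at speed $w=\lambda_k$) is only a cosmetic variant of the paper's observation that $\Gamma_k'(w)$ is parallel to the $k$th right eigenvector.

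You and the paper both skip two bookkeeping points. First, the undifferentiated ($a=b=0$) part of the seminorm is $O(1)$, not $O(\delta)$, unless the constant may depend on a positive lower bound for $\delta$ on the compact family. Second, the top $L^2$ term needs $s+3M+6$ derivatives of the inverse map $\Gamma_k$, but $p_i\in C^{s+3M+6}$ gives only $c_q=\sqrt{p_i'+1}\in C^{s+3M+5}$. So your regularity count must cover the pressure side, not only $w_0$.
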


\begin{proof}
Let $\Xi_t(\xi)=\xi+tw_0(\xi)$ and $J(t,\xi)=\pa_\xi\Xi_t
=1+tw_0'(\xi)$.  Monotonicity gives $J\ge1$.  Since
$\Xi_t(\xi)\to\pm\infty$ as $\xi\to\pm\infty$, $\Xi_t$ is a global
$C^\infty$ diffeomorphism and
\[
 w(t,x_1)=w_0(\Xi_t^{-1}(x_1)).
\]
Differentiating $x_1=\Xi_t(\xi)$ gives the two basic identities
\begin{equation}
 \pa_1w=\frac{w_0'(\xi)}{J(t,\xi)},
 \qquad
 \pa_tw=-\frac{w_0(\xi)w_0'(\xi)}{J(t,\xi)}=-w\pa_1w.
 \label{eq:burgers-first-derivatives}
\end{equation}
In particular $\pa_1w\ge0$.  An induction using
$\pa_1=J^{-1}\pa_\xi$ shows that, for $m\ge1$,
\begin{equation}
 \pa_1^mw(t,x_1)
 =\sum_{\nu=1}^m
 \frac{t^{\nu-1}P_{m,\nu}
 (w_0'(\xi),\ldots,w_0^{(m)}(\xi))}
 {J(t,\xi)^{m+\nu-1}},
 \label{eq:burgers-spatial-derivatives}
\end{equation}
where every monomial in $P_{m,\nu}$ contains at least one derivative of
$w_0$.  Mixed time-space derivatives are obtained by repeatedly replacing
$\pa_tw$ with $-w\pa_1w$; hence they have the same structure, with bounded
undifferentiated factors $w$.

Because $J\ge1$ and $\dd x_1=J\,\dd\xi$, the exponential localization in
\eqref{eq:w0-tails} and \eqref{eq:burgers-spatial-derivatives} imply, for
$a+b\le s+3M+5$,
\begin{equation}
\begin{aligned}
 \sup_{t_0\le t\le T}
 \norm{\pa_t^a\pa_1^bw(t)}_{L^\infty}
 &\le C_{a,b,T}\Delta w,\\
 \norm{\pa_t^a\pa_1^{b+1}w}_{L^2([t_0,T]\times\R)}
 &\le C_{a,b,t_0,T}\Delta w.
\end{aligned}
\label{eq:w-profile-bounds}
\end{equation}
No singular factor occurs when $t_0=0$; the constants are controlled by the
fixed smooth datum $w_0$.

Let $\Gamma_k(w)=(N(w),V(w))$ denote the inverse of
$(N,V)\mapsto(\lambda_k,R_{-k})$ on the compact rarefaction curve.  Its
derivative is a nonzero multiple of the $k$th right eigenvector.  Therefore
$U^r=\Gamma_k(w)$ satisfies
\[
 \pa_tU^r+A(U^r)\pa_1U^r
 =\Gamma_k'(w)(\pa_tw+\lambda_k(U^r)\pa_1w)=0,
\]
which is precisely the one-dimensional form of
\eqref{eq:QE-mass}--\eqref{eq:QE-momentum}.  The end states and state-range
claims follow from the definition of $\Gamma_k$.  Finally, the ordinary
chain rule, \eqref{eq:w-profile-bounds}, and boundedness of the
derivatives of $\Gamma_k$ give \eqref{eq:rare-bounds}.
\end{proof}

Let
\begin{equation}
 w^c(\xi)=
 \begin{cases}
  w_-,&\xi\le w_-,\\
  \xi,&w_-<\xi<w_+,\\
  w_+,&\xi\ge w_+,
 \end{cases} \label{eq:centered-burgers-fan}
\end{equation}
and let $(N^c,V^c)(\xi)$ be its image on the same rarefaction curve.

\begin{proposition}[Comparison with the centered fan]
\label{prop:centered-comparison}
For the exponentially localized choice \eqref{eq:w0-tails},
\begin{equation}
 \sup_{x_1\in\R}
 \abs{(\Nr,\Vr)(t,x_1)-(N^c,V^c)(x_1/t)}
 \le C\frac{\Delta w}{1+t\Delta w}
 \log(2+t\Delta w),\qquad t>0. \label{eq:centered-comparison}
\end{equation}
In particular, the smooth reference converges uniformly to the centered
Euler rarefaction as $t\to\infty$.
\end{proposition}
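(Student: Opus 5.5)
The plan is to reduce the comparison to the scalar Burgers level and then push it through the smooth map $\Gamma_k$. Both $(\Nr,\Vr)(t,x_1)=\Gamma_k(w(t,x_1))$ and $(N^c,V^c)(x_1/t)=\Gamma_k(w^c(x_1/t))$ lie on the same compact rarefaction curve. Also, $\Gamma_k$ is Lipschitz with a constant uniform over the admissible end states (Assumption~\ref{ass:pressure}). It therefore suffices to prove
\[
 \sup_{x_1\in\R}\abs{w(t,x_1)-w^c(x_1/t)}\le C\frac{\Delta w}{1+t\Delta w}\log(2+t\Delta w),\qquad t>0.
\]

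For small times, $t\Delta w\le 1$, the right-hand side is comparable to $\Delta w$. Both $w(t,\cdot)$ and $w^c(\cdot/t)$ take values in $[w_-,w_+]$, so the difference is at most $\Delta w$ and there is nothing to prove. The substance is the regime $t\Delta w\ge1$.

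For large times, we work in Lagrangian coordinates. Fix $x_1$ and let $\xi=\Xi_t^{-1}(x_1)$, so that $x_1=\xi+tw_0(\xi)$ and $w(t,x_1)=w_0(\xi)$. Since $w^c$ is $1$-Lipschitz and $w_0(\xi)\in[w_-,w_+]$ satisfies $w^c(w_0(\xi))=w_0(\xi)$, we get
\[
 \abs{w_0(\xi)-w^c(x_1/t)}=\abs{w^c(w_0(\xi))-w^c(w_0(\xi)+\xi/t)}\le\min\Bigl(\frac{\abs\xi}{t},\ \Delta w\Bigr).
\]
This bound alone is insufficient when $\abs\xi$ is large. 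We sharpen it using the tails \eqref{eq:w0-tails}: for $\xi\ge0$ we have $w_+-w_0(\xi)\le C\Delta w\,e^{-c\xi}$, and $w_0(\xi)+\xi/t\ge w_+$ as soon as $\xi/t\ge w_+-w_0(\xi)$. In that case $w^c(x_1/t)=w_+$, and the error is just $w_+-w_0(\xi)\le C\Delta w\,e^{-c\xi}$. Otherwise $\xi/t< C\Delta w\,e^{-c\xi}$, and the error is at most $\xi/t$. The case $\xi\le0$ is symmetric.

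We combine the two cases by a threshold choice. Set $\xi_*=c^{-1}\log(2+t\Delta w)$. For $\abs\xi\le\xi_*$ the error is at most $\xi_*/t\lesssim \log(2+t\Delta w)/t$. For $\abs\xi\ge\xi_*$ it is at most $\max\bigl(C\Delta w\,e^{-c\xi_*},\ C\Delta w\,e^{-c\xi}\bigr)\lesssim \Delta w/(2+t\Delta w)$; here we use that in the second alternative $\xi/t$ is itself bounded by $C\Delta w\,e^{-c\xi}$. Since $t\Delta w\ge1$, we have $1/t\lesssim\Delta w/(1+t\Delta w)$, which yields the claimed bound. Uniform convergence as $t\to\infty$ then follows because the right-hand side tends to zero.

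The main obstacle is the bookkeeping needed to make the constants depend on the profile only through $\Delta w$. The logarithm should scale with the tail rate $c$, and if $w_0$ is an affine rescaling of $\tanh$ then $c$ is fixed. I would handle this by taking the constants $C_j$ and $c$ in \eqref{eq:w0-tails} as fixed for the chosen family, and absorbing $c^{-1}$ into $C$.
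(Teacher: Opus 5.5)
Your proof is correct and follows essentially the same route as the paper. Both arguments reduce to the Burgers level through the Lipschitz map $\Gamma_k$, use the characteristic relation $x_1=\xi+tw_0(\xi)$, and use the exponential tails \eqref{eq:w0-tails} to produce the $t^{-1}\log(2+t\Delta w)$ scale. The difference is only organizational: you split by the Lagrangian label at $\xi_*=c^{-1}\log(2+t\Delta w)$ after the global $1$-Lipschitz bound for $w^c$, whereas the paper splits according to whether $x_1/t$ lies inside the fan and bounds $|\xi|$ there via $y\le ae^{-cy}\Rightarrow y\le C\log(2+a)$. Your threshold version also spells out the outer-region maximization that the paper states only briefly.
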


\begin{proof}
Fix $t>0$ and write $x_1=\xi+t w_0(\xi)$.  We first assume that
$x_1/t\in(w_-,w_+)$.  Since $w^c(x_1/t)=x_1/t$, the characteristic identity
gives
\begin{equation}
  w^c(x_1/t)-w(t,x_1)=\xi/t. \label{eq:fan-interior-difference}
\end{equation}
The inequalities $w_-<w_0(\xi)+\xi/t<w_+$ imply
\[
 -t\bigl(w_0(\xi)-w_-\bigr)<\xi
 <t\bigl(w_+-w_0(\xi)\bigr).
\]
Since both differences in parentheses lie in $[0,\Delta w]$, the same
inequality gives the small-time bound $\abs\xi\le t\Delta w$.
For $\xi\ge0$, the right tail in \eqref{eq:w0-tails} gives
$\xi\le Ct\Delta w e^{-c\xi}$; for $\xi\le0$, the left tail gives the
corresponding inequality for $-\xi$.  The elementary implication
$y\le ae^{-cy}\Rightarrow y\le C_c\log(2+a)$ yields
\begin{equation}
 \abs\xi\le C\log(2+t\Delta w) 
 \quad\text{whenever }x_1/t\in(w_-,w_+). \label{eq:xi-log-bound}
\end{equation}
Combining these two bounds with \eqref{eq:fan-interior-difference} gives
\[
 \abs{w^c(x_1/t)-w(t,x_1)}
 \le C\min\left\{\Delta w,
       \frac{\log(2+t\Delta w)}{t}\right\},
\]
which proves the desired estimate in the fan interior in both regimes
$t\Delta w\le1$ and $t\Delta w\ge1$.

If $x_1/t\ge w_+$, then $w^c(x_1/t)=w_+$ and
$x_1/t=w_0(\xi)+\xi/t\ge w_+$.  Since $w_0\le w_+$, this inequality forces
$\xi\ge0$, and the right tail gives
\[
 0\le w_+-w(t,x_1)=w_+-w_0(\xi)\le C\Delta w e^{-c\xi}.
\]
Its maximum subject to
$\xi/t\ge w_+-w_0(\xi)$ is bounded by
$C\min\{\Delta w,t^{-1}\log(2+t\Delta w)\}$.  The region
$x_1/t\le w_-$ is treated identically.  We have therefore shown
\[
 \norm{w(t,\cdot)-w^c(\cdot/t)}_\infty
 \le C\min\left\{\Delta w,\frac{\log(2+t\Delta w)}{t}\right\}
 \le C\frac{\Delta w}{1+t\Delta w}\log(2+t\Delta w).
\]
The map $\Gamma_k$ used in the previous proof is Lipschitz on the compact
rarefaction curve.  Applying it to the last scalar estimate proves
\eqref{eq:centered-comparison}.
\end{proof}

\begin{remark}
The theorem establishes stability for the smooth exact simple wave, not for
the nonsmooth edges of \eqref{eq:centered-burgers-fan}.
\Cref{prop:centered-comparison} quantifies its precise large-time relation to the
centered fan.  Uniform control of the fan edges or the Riemann vertex would
require the geometric theory discussed in the introduction.
\end{remark}

\subsubsection{Corrector hierarchy}

Write $U_0=(\Nr,(\Vr,0))$ and $\Phi_0=\log\Nr$.  We seek
\begin{equation}
  U^a=\sum_{j=0}^M\eps^{2j}U_j,
  \qquad
  \phi^a=\sum_{j=0}^M\eps^{2j}\Phi_j. \label{eq:corrector-ansatz}
\end{equation}
For a formal series $G(z)$, denote its $z^j$ coefficient by $[z^j]G$.
Writing
\[
 \widehat N=\sum_{j\ge0}z^jN_j,\qquad
 \widehat V=\sum_{j\ge0}z^jV_j,\qquad
 \widehat\Phi=\sum_{j\ge0}z^j\Phi_j,
\]
the hierarchy is defined without ambiguity by
\begin{align}
 [z^j]\{\pa_t\widehat N+\pa_1(\widehat N\widehat V)\}&=0,
 \label{eq:coefficient-mass}\\
 [z^j]\{\pa_t\widehat V+\widehat V\pa_1\widehat V
 +h_i'(\widehat N)\pa_1\widehat N+\pa_1\widehat\Phi\}&=0,
 \label{eq:coefficient-momentum}\\
 [z^j]\{z\pa_1^2\widehat\Phi-e^{\widehat\Phi}+\widehat N\}&=0,
 \label{eq:coefficient-poisson}
\end{align}
for $0\le j\le M$.  At each fixed order only finitely many Taylor
coefficients occur, so these identities are algebraic definitions even
though the displayed series are formal.

At order $\eps^2$, set $U_1=(N_1,(V_1,0))$.  The Poisson equation gives
\begin{equation}
  \Phi_1=\frac{N_1+\pa_1^2\log\Nr}{\Nr}. \label{eq:first-phi}
\end{equation}
The fluid equations give the forced linearized effective Euler system
\begin{align}
  \pa_tN_1+\pa_1(\Vr N_1+\Nr V_1)&=0, \label{eq:first-corrector-mass}\\
  \pa_tV_1+\Vr\pa_1V_1+V_1\pa_1\Vr
  +\pa_1\bigl(q'(\Nr)N_1\bigr)
  &=-\pa_1\left(\frac{\pa_1^2\log\Nr}{\Nr}\right). \label{eq:first-corrector-momentum}
\end{align}
We take $(N_1,V_1)(t_0)=0$.  Higher orders have the same principal part.

To make the induction explicit, put $Y_j=(N_j,V_j)^{\mathsf T}$ and define
\begin{equation}
 \mathcal L_rY_j=
 \begin{pmatrix}
  \pa_tN_j+\Vr\pa_1N_j+\Nr\pa_1V_j
    +(\pa_1\Vr)N_j+(\pa_1\Nr)V_j\\
  \pa_tV_j+\Vr\pa_1V_j+q'(\Nr)\pa_1N_j
    +(\pa_1\Vr)V_j+q''(\Nr)(\pa_1\Nr)N_j
 \end{pmatrix}. \label{eq:linearized-rarefaction}
\end{equation}
At order $\eps^{2j}$, the hierarchy has the triangular form
\begin{equation}
  \Nr\Phi_j-N_j=S_j,\qquad
  \mathcal L_rY_j=F_j,\qquad Y_j(t_0)=0, \label{eq:hierarchy-abstract}
\end{equation}
where $S_j$ and $F_j$ depend only on $U_0,\ldots,U_{j-1}$ and
$\Phi_0,\ldots,\Phi_{j-1}$.  In particular, substitution of
$\Phi_j=(N_j+S_j)/\Nr$ moves every occurrence of the unknown $N_j$ into the
effective pressure coefficient $q'(\Nr)$.  This is the algebraic reason that
the fluid equation at every order has the same strictly hyperbolic principal
part.  The associated derivative budget is summarized in
\Cref{fig:hierarchy}.

\begin{figure}[htbp]
\centering
\includegraphics[width=\linewidth]{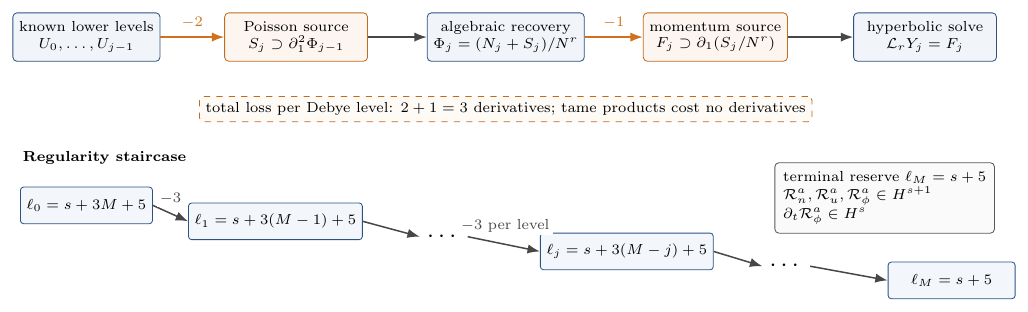}
\caption{Derivative bookkeeping in the finite Debye hierarchy.  At level
$j$, the Poisson source $S_j$ costs two derivatives and its re-entry into the
momentum forcing costs one more; the remaining tame products and the
symmetrizable hyperbolic solve cause no additional loss.  Consequently
$\ell_j=s+3(M-j)+5$, leaving the terminal reserve needed for
$\mathcal R_\phi^a\in H^{s+1}$ and
$\partial_t\mathcal R_\phi^a\in H^s$.}
\label{fig:hierarchy}
\end{figure}

\begin{lemma}[Uniform remainder of a finite Debye hierarchy]
\label{lem:profile-remainder}
For $z\in[0,z_0]$, define
\[
 N_M(z)=\sum_{j=0}^Mz^jN_j,\qquad
 V_M(z)=\sum_{j=0}^Mz^jV_j,\qquad
 \Phi_M(z)=\sum_{j=0}^Mz^j\Phi_j,
\]
and put $\ell_j=s+3(M-j)+5$ for $0\le j\le M$.
Assume that the coefficient identities
\eqref{eq:coefficient-mass}--\eqref{eq:coefficient-poisson} hold through
order $M$, that the base profile has the bounds of
\Cref{prop:smooth-simple-wave}, and that
\begin{equation}
\begin{split}
 \mathcal B_M={}&
 \sum_{j=1}^M\sup_{t_0\le t\le T}\bigl(
 \norm{(N_j,V_j,\Phi_j)(t)}_{H^{\ell_j}}
 +\norm{\pa_t(N_j,V_j,\Phi_j)(t)}_{H^{\ell_j-1}}\bigr)<\infty.
 \label{eq:remainder-coefficient-bound}
\end{split}
\end{equation}
Suppose also that the positive-order coefficients and their derivatives tend
to zero at spatial infinity.  Then, uniformly for $0\le z\le z_0$,
\begin{equation}
\begin{split}
 &\norm{\mathcal F_n(z)}_{H^{s+1}}
 +\norm{\mathcal F_u(z)}_{H^{s+1}}
 +\norm{\mathcal F_\phi(z)}_{H^{s+1}}
 +\norm{\pa_t\mathcal F_\phi(z)}_{H^s}
 \le C_{\mathcal B_M,t_0,T}z^{M+1},
 \label{eq:finite-hierarchy-remainder}
\end{split}
\end{equation}
where
\begin{align}
 \mathcal F_n(z)&=\pa_tN_M(z)+\pa_1(N_M(z)V_M(z)),
 \label{eq:remainder-mass-functional}\\
 \mathcal F_u(z)&=\pa_tV_M(z)+V_M(z)\pa_1V_M(z)
 +\pa_1h_i(N_M(z))+\pa_1\Phi_M(z),
 \label{eq:remainder-momentum-functional}\\
 \mathcal F_\phi(z)&=z\pa_1^2\Phi_M(z)
 -e^{\Phi_M(z)}+N_M(z).
 \label{eq:remainder-poisson-functional}
\end{align}
\end{lemma}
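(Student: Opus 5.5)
The proof is a finite Taylor-remainder argument in the formal parameter $z$. Each functional $\mathcal F_\bullet(z)$ is a polynomial in $z$ composed with smooth nonlinearities ($h_i$, $\exp$). For such a map, the coefficient identities \eqref{eq:coefficient-mass}--\eqref{eq:coefficient-poisson} say exactly that the Taylor coefficients of orders $0,\dots,M$ at $z=0$ vanish. We therefore write, for each functional,
\[
 \mathcal F_\bullet(z)=\frac{z^{M+1}}{M!}\int_0^1(1-\theta)^M\,\pa_z^{M+1}\mathcal F_\bullet(\theta z)\,\dd\theta ,
\]
and reduce \eqref{eq:finite-hierarchy-remainder} to a uniform bound on $\pa_z^{M+1}\mathcal F_\bullet(\zeta)$ in $H^{s+1}$ (and $\pa_t\pa_z^{M+1}\mathcal F_\phi$ in $H^s$) for $\zeta\in[0,z_0]$.

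\emph{Vanishing of the Taylor coefficients.} $z\mapsto N_M(z)$ etc.\ are polynomials, so $\pa_z^j\mathcal F_\bullet(0)/j!$ is the $z^j$ coefficient of the composite. By Fa\`a di Bruno this involves only $N_0,\dots,N_j$ (similarly $V$, $\Phi$), and therefore coincides with $[z^j]$ of the formal series in \eqref{eq:coefficient-mass}--\eqref{eq:coefficient-poisson}. Hence it is zero for $j\le M$.

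\emph{Handling the profile at infinity.} The zeroth-order terms are not in $L^2$, so I split $N_M(z)=\Nr+\widetilde N(z)$ with $\widetilde N(z)=\sum_{j\ge1}z^jN_j$, and likewise for $V$ and $\Phi$. Since $\pa_z^{M+1}$ annihilates $z$-independent pieces, every term of $\pa_z^{M+1}\mathcal F_\bullet(\zeta)$ contains at least one factor $N_j$, $V_j$ or $\Phi_j$ with $j\ge1$, possibly differentiated. For example, $\pa_z^{M+1}\bigl(e^{\Phi_M(z)}\bigr)=e^{\Phi_M}\,P(\Phi_1,\dots,\Phi_M,\zeta)$ with $P$ a polynomial whose monomials each contain some $\Phi_j$, $j\ge1$. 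Similarly, $\pa_1 h_i(N_M)=h_i'(N_M)\pa_1N_M$ is handled with $h_i^{(k)}$ evaluated along $N_M(\zeta)$.

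Each such term is then estimated by a tame Moser product estimate in $H^{s+1}$: the $j\ge1$ factor is placed in $H^{\ell_j-2}\subset H^{s+1}$, and all remaining factors in $W^{s+1,\infty}$. The remaining factors are:
\begin{itemize}
\item base-profile derivatives, bounded via \eqref{eq:rare-bounds};
\item corrector factors, bounded via Sobolev embedding and $\mathcal B_M$;
\item composite functions $e^{\Phi_M}$, $h_i^{(k)}(N_M)$, bounded by the chain rule.
\end{itemize}
Two hypotheses are needed here. First, $N_M(\zeta)$ must stay in $I^\sharp$; this holds for $z_0$ small, since $\norm{\widetilde N}_{L^\infty}\le Cz_0\mathcal B_M$. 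Second, $p_i\in C^{s+3M+6}$ must supply enough derivatives. The $z$-derivative also produces lower powers of $z$ from $\mathcal F_\phi=z\pa_1^2\Phi_M-\dots$, which are harmless for $z\le z_0$. The decay-at-infinity hypothesis guarantees that these $j\ge1$ terms genuinely lie in $H^k$, not merely in $\dot H^k$.

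\emph{Derivative budget, the main obstacle.} The expected sticking point is the bookkeeping. $\mathcal F_\phi$ contains $\pa_1^2\Phi_j$ and must be bounded in $H^{s+1}$, and $\pa_t\mathcal F_\phi$ in $H^s$; this requires $\Phi_j\in H^{s+3}$ and $\pa_t\Phi_j\in H^{s+2}$. Since $\ell_j\ge\ell_M=s+5$, we have $\Phi_j\in H^{s+5}$ and $\pa_t\Phi_j\in H^{s+4}$, which suffices. The fluid functionals need only one spatial derivative of $N_j$, $V_j$, $h_i(N_M)$ and $\Phi_j$ in $H^{s+1}$, which also fits. For $\pa_t\mathcal F_\phi$, differentiating the nonlinearity produces $\pa_t\Phi_M$, which contains $\pa_t\log\Nr$; this is bounded in $W^{s,\infty}$ by the $\cX$ seminorm, while its $j\ge1$ parts are controlled by the $\pa_t$ component of $\mathcal B_M$. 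I would check each of the three functionals separately against the minimal level $\ell_M=s+5$, so the constant depends only on $\mathcal B_M$, $t_0$, $T$, and $z_0$. Integrating the Taylor formula then yields the factor $z^{M+1}$.
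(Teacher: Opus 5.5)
Your proposal is correct and follows the paper's proof essentially step for step. Both use the vanishing of $\pa_z^j\mathcal F(0)$ for $j\le M$ from the coefficient identities, the integral Taylor remainder in $z$, the observation that every term of $\pa_z^{M+1}\mathcal F_\bullet$ (and of $\pa_t\pa_z^{M+1}\mathcal F_\phi$) contains a positive-order and hence localized coefficient, and a high--low tame product estimate checked against the lowest reserve $\ell_M=s+5$. Your remark that $z_0$ must be small enough to keep $N_M(z)$ in $I^\sharp$ makes explicit what the paper only summarizes as ``the state range is compact.''
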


\begin{proof}
For $\mathcal F\in\{\mathcal F_n,\mathcal F_u,\mathcal F_\phi\}$, the
coefficient identities say
\begin{equation}
 \pa_z^j\mathcal F(0)=0,\qquad 0\le j\le M.
 \label{eq:remainder-vanishing-coefficients}
\end{equation}
Taylor's formula therefore has no finite part:
\begin{equation}
 \mathcal F(z)=\frac{z^{M+1}}{M!}
 \int_0^1(1-\theta)^M
 \pa_z^{M+1}\mathcal F(\theta z)\,\dd\theta.
 \label{eq:profile-Taylor-remainder}
\end{equation}
It remains to bound the integral in the four norms in
\eqref{eq:finite-hierarchy-remainder}.

The mass functional is polynomial.  After $M+1$ derivatives in $z$, every
surviving term has the form
\begin{equation}
 c_{jk}\,\pa_1(N_jV_k),\qquad
 0\le j,k\le M,\qquad j+k\ge M+1.
 \label{eq:mass-remainder-monomial}
\end{equation}
Thus at least one index is positive, so the term is spatially localized.
The $H^{s+1}$ product estimate costs one derivative and is bounded by
\eqref{eq:remainder-coefficient-bound}, because
$\ell_M=s+5$.
The quadratic velocity term in $\mathcal F_u$ is identical.

For the pressure and exponential compositions, the parameter derivative has
the explicit Fa\`a di Bruno form
\begin{equation}
 \pa_z^kG(X_M(z))
 =\sum_{\ell=1}^kG^{(\ell)}(X_M(z))
 \sum_{\substack{r_1+\cdots+r_\ell=k\\r_i\ge1}}
 c_{\mathbf r}\prod_{i=1}^{\ell}\pa_z^{r_i}X_M(z),
 \qquad
 \pa_z^rX_M(z)=
 \sum_{j=r}^M\frac{j!}{(j-r)!}z^{j-r}X_j,
 \label{eq:parameter-Faa-di-Bruno}
\end{equation}
with $G=h_i$ and $X_M=N_M$, or with $G=e^{(\cdot)}$ and
$X_M=\Phi_M$.  If $k=M+1$, each nonzero product in
\eqref{eq:parameter-Faa-di-Bruno} contains a positive-order coefficient.
The state range is compact, so all derivatives of $G$ that occur are
bounded.  The tame product estimate in \Cref{lem:appendix-tame} places the
factor with the largest spatial derivative in $L^2$ and all remaining
factors in $L^\infty$.  Consequently,
\begin{align}
 \sup_{0\le z\le z_0}
 \norm{\pa_z^{M+1}\pa_1h_i(N_M(z))}_{H^{s+1}}
 &\le C_{\mathcal B_M},\label{eq:pressure-remainder-bound}\\
 \sup_{0\le z\le z_0}
 \norm{\pa_z^{M+1}e^{\Phi_M(z)}}_{H^{s+1}}
 &\le C_{\mathcal B_M}.
 \label{eq:exponential-remainder-bound}
\end{align}
The first estimate uses at most $s+2$ spatial derivatives of $N_j$.
The explicit term $z\pa_1^2\Phi_M$ uses at most $s+3$ derivatives of
$\Phi_j$.  Both are below the lowest reserve
$\ell_M=s+5$.  Equations
\eqref{eq:mass-remainder-monomial}--\eqref{eq:exponential-remainder-bound}
prove the three spatial bounds in
\eqref{eq:finite-hierarchy-remainder}.

For the last bound, differentiate
\eqref{eq:remainder-poisson-functional} in time before applying Taylor's
formula:
\begin{equation}
 \pa_t\mathcal F_\phi(z)
 =z\pa_1^2\pa_t\Phi_M(z)
 -e^{\Phi_M(z)}\pa_t\Phi_M(z)+\pa_tN_M(z).
 \label{eq:time-remainder-functional}
\end{equation}
After $M+1$ parameter derivatives, Leibniz' rule and
\eqref{eq:parameter-Faa-di-Bruno} produce products with exactly one
time-differentiated coefficient and only spatially differentiated
coefficients otherwise.  Estimate
\eqref{eq:remainder-coefficient-bound} gives
$\pa_t\Phi_j\in H^{\ell_j-1}$ and
$\pa_tN_j\in H^{\ell_j-1}$.  The worst explicit term in
\eqref{eq:time-remainder-functional} requires
$s+2$ spatial derivatives of $\pa_t\Phi_j$, while
$\ell_M-1=s+4$.  The same high--low allocation therefore gives
\begin{equation}
 \sup_{0\le z\le z_0}
 \norm{\pa_z^{M+1}\pa_t\mathcal F_\phi(z)}_{H^s}
 \le C_{\mathcal B_M,t_0,T}.
 \label{eq:time-remainder-bound}
\end{equation}
Every remainder monomial contains a positive-order coefficient or a spatial
derivative of the base wave; hence it belongs to $L^2$ despite the distinct
end states.  Substitution of
\eqref{eq:pressure-remainder-bound}--\eqref{eq:time-remainder-bound} into
\eqref{eq:profile-Taylor-remainder} proves
\eqref{eq:finite-hierarchy-remainder}.
\end{proof}

\begin{proposition}[Composite profile]\label{prop:composite}
For each $M\ge0$ there are unique planar correctors
$(U_j,\Phi_j)_{1\le j\le M}$, with zero fluid data at $t=t_0$ and zero
far-field limits, such that the profile \eqref{eq:corrector-ansatz} obeys
\begin{equation}
  \sup_{0<\eps\le\eps_0}
  \bigl(\abs{n^a}_{\cX_{s+3,T}}+\abs{u^a}_{\cX_{s+3,T}}
  +\abs{\phi^a}_{\cX_{s+3,T}}\bigr)\le C_{M,t_0,T}, \label{eq:profile-bound}
\end{equation}
and, for $\eps_0$ small,
$n^a\in[3n_*/4,5n^*/4]$.  Its residual satisfies
\begin{equation}
\begin{split}
  &\sup_{t_0\le t\le T}
  \bigl(\norm{\cR_n^a}_{H^{s+1}}+\norm{\cR_u^a}_{H^{s+1}}
  +\norm{\cR_\phi^a}_{H^{s+1}}
  +\norm{\pa_t\cR_\phi^a}_{H^s}\bigr)
  \le C_{M,t_0,T}\eps^{2M+2}. \label{eq:residual-bound}
\end{split}
\end{equation}
All residuals and all corrector derivatives vanish at spatial infinity.
\end{proposition}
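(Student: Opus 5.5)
The proof will be an induction on $j$ that constructs the correctors through the triangular system \eqref{eq:hierarchy-abstract}, followed by an appeal to \Cref{lem:profile-remainder} with $z=\eps^2$.

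\textbf{Inductive construction.} Set $\ell_j=s+3(M-j)+5$. The inductive hypothesis is that, for every $i<j$, the correctors $(N_i,V_i,\Phi_i)$ are planar and satisfy
\[
(N_i,V_i,\Phi_i)\in C([t_0,T];H^{\ell_i}),\qquad \pa_t(N_i,V_i,\Phi_i)\in C([t_0,T];H^{\ell_i-1}).
\]
For $i=0$ the corresponding statement concerns only the derivatives of the base wave, and it comes from \eqref{eq:rare-bounds}.

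\textbf{Structure of the sources.} Extracting $[z^j]$ from \eqref{eq:coefficient-poisson} gives
\[
\Nr\Phi_j-N_j=S_j,
\]
where $S_j$ collects three kinds of terms: $\pa_1^2\Phi_{j-1}$; the Fa\`a di Bruno terms of $e^{\widehat\Phi}$ that involve only $\Phi_1,\dots,\Phi_{j-1}$; and the corrections coming from $e^{\Phi_0}=\Nr$. Every monomial of $S_j$ contains either a positive-order coefficient or (for $j=1$) $\pa_1^2\log\Nr$. Hence $S_j$ is spatially localized and lies in $H^{\ell_{j-1}-2}$.

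Substituting $\Phi_j=(N_j+S_j)/\Nr$ into \eqref{eq:coefficient-momentum} combines $h_i'(\Nr)\pa_1N_j$ with $\pa_1(N_j/\Nr)$ to give $\pa_1(q'(\Nr)N_j)$. This yields $\mathcal L_rY_j=F_j$, where $F_j$ contains $-\pa_1(S_j/\Nr)$ together with polynomial and Fa\`a di Bruno products of lower-order coefficients. Consequently $F_j\in H^{\ell_{j-1}-3}=H^{\ell_j}$, and its time derivative lies in $H^{\ell_j-1}$. The budget therefore loses exactly three derivatives per level, as recorded in \Cref{fig:hierarchy}.

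\textbf{Linear solve.} The operator $\mathcal L_r$ is a one-dimensional strictly hyperbolic system with principal matrix $\begin{pmatrix}\Vr&\Nr\\ q'(\Nr)&\Vr\end{pmatrix}$. It is symmetrized by $\mathrm{diag}(q'(\Nr),\Nr)$, which is positive definite because $q'>0$ on the compact state range by \Cref{ass:pressure}. Standard $H^k$ energy estimates for symmetric hyperbolic systems will be used. The coefficients have bounded derivatives up to order $s+3M+5$ by \eqref{eq:rare-bounds}, and commutators are handled with \Cref{lem:appendix-tame}. Gronwall's inequality on $[t_0,T]$ then gives existence and uniqueness of $Y_j\in C([t_0,T];H^{\ell_j})$ with $Y_j(t_0)=0$, and the bound $\norm{Y_j}_{H^{\ell_j}}\le C_{t_0,T}\int_{t_0}^T\norm{F_j}_{H^{\ell_j}}$. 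The time derivative $\pa_tY_j\in H^{\ell_j-1}$ follows from the equation.

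\textbf{Recovering $\Phi_j$.} Next set $\Phi_j=(N_j+S_j)/\Nr$. This function lies in $H^{\ell_j}$, since $S_j\in H^{\ell_{j-1}-2}\subset H^{\ell_j}$, and its time derivative is in $H^{\ell_j-1}$. Uniqueness of the correctors follows from uniqueness for the linear Cauchy problem together with the algebraic Poisson relation. Vanishing at infinity follows from membership in $H^1(\R)$ after one derivative, via Sobolev embedding.

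\textbf{Main obstacle: the $H^{\ell_j}$ regularity of $F_j$.} The point I expect to need the most care is verifying that $F_j$ genuinely lies in $H^{\ell_j}$ and not merely in $L^\infty$. Terms such as $h_i^{(m)}(\Nr)\,\pa_1\Nr\,N_{i_1}\cdots$ always carry either a corrector factor or $\pa_1\Nr\in L^2$. The $L^2$ factor is placed at the highest derivative, and \eqref{eq:w0-tails} controls $\sup_t\norm{\pa_1^{b+1}\Nr}_{L^2}$. This estimate is a pointwise-in-time version of \eqref{eq:w-profile-bounds}, obtained via $\dd x_1=J\dd\xi$ and $J\le 1+T\sup w_0'$. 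The same bookkeeping will be carried out for $\pa_tF_j$.

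\textbf{Profile bound and density range.} The bound \eqref{eq:profile-bound} is immediate from \eqref{eq:rare-bounds}, from the embedding $H^{\ell_j}\subset W^{\ell_j-1,\infty}$, and from $\ell_j\ge s+5$. Here the $L^2_{t,x}$ part of the $\cX$ seminorm is bounded by $(T-t_0)^{1/2}\sup_t\norm{\cdot}_{H}$, and time derivatives of order up to $s+3$ are recovered from the equations. The density range follows from $\Nr\in[n_*,n^*]$ and $\abs{n^a-\Nr}\le C\eps^2$, after choosing $\eps_0$ small.

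\textbf{Residual bound.} With $z=\eps^2$, the coefficient identities hold through order $M$, and $\mathcal B_M<\infty$ by the construction above. \Cref{lem:profile-remainder} then gives \eqref{eq:residual-bound} with the factor $z^{M+1}=\eps^{2M+2}$, since the residuals $\cR^a$ coincide with $\mathcal F_n,\mathcal F_u,\mathcal F_\phi$ on planar profiles. The residuals vanish at infinity because they lie in $H^{s+1}$.
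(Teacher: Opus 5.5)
Your proposal is correct and follows essentially the same route as the paper. Both arguments use the triangular reduction $\Nr\Phi_j-N_j=S_j$, absorb the $N_j$-terms into $\pa_1(q'(\Nr)N_j)$, keep the budget $\ell_j=s+3(M-j)+5$ with three derivatives lost per level, perform a symmetrized linear solve (your symmetrizer is $\Nr$ times the paper's), and finish with \Cref{lem:profile-remainder} at $z=\eps^2$.

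The differences are minor. You invoke standard linear symmetric-hyperbolic existence theory, whereas the paper writes out a Fourier-cutoff construction. You also make explicit two points the paper leaves implicit: the $\sup_t L^2_{x_1}$ control of base-wave derivatives needed for $\mathcal B_M$, and the recovery of time derivatives from the equations behind \eqref{eq:profile-bound}.
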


\begin{proof}
Put $z=\eps^2$.  We first verify the triangular structure and the derivative
count.  Since $e^{\Phi_0}=\Nr$, the coefficient of $z^j$ in the Poisson
equation is
\begin{equation}
  \Nr\Phi_j-N_j=S_j,
  \qquad
  S_j=\pa_1^2\Phi_{j-1}
  -\Nr\sum_{\ell=2}^j
  \sum_{\substack{j_1+\cdots+j_\ell=j\\j_1,\ldots,j_\ell\ge1}}
  \frac{\Phi_{j_1}\cdots\Phi_{j_\ell}}{\ell!}, \label{eq:triangular-poisson}
\end{equation}
with the convention $\Phi_{-1}=0$.  Every index in the double sum is at
most $j-1$, so $S_j$ is already known at the $j$th step and
$\Phi_j=(N_j+S_j)/\Nr$.

For clarity, the mass component of the fluid forcing is
\begin{equation}
 (F_j)_1=-\sum_{\ell=1}^{j-1}
 \pa_1(N_\ell V_{j-\ell}). \label{eq:Fj-mass}
\end{equation}
After substituting $\Phi_j=(N_j+S_j)/\Nr$, the momentum component can be
written
\begin{equation}
 (F_j)_2=-\sum_{\ell=1}^{j-1}V_\ell\pa_1V_{j-\ell}
 -\pa_1\left(\frac{S_j}{\Nr}\right)-\mathcal Q_j,
 \label{eq:Fj-momentum}
\end{equation}
where $\mathcal Q_j$ is the coefficient of $z^j$ in
\[
 h_i'(\widehat N)\pa_1\widehat N
 -h_i'(\Nr)\pa_1\Nr
 -z^j\left(h_i'(\Nr)\pa_1N_j
 +h_i''(\Nr)N_j\pa_1\Nr\right)
\]
after the lower coefficients have been fixed.  Equivalently,
$\mathcal Q_j$ is a finite sum of products of lower $N_\ell$ and their first
derivatives.  The two terms linear in $N_j$, together with
$\pa_1(N_j/\Nr)$, are exactly
\[
 q'(\Nr)\pa_1N_j+q''(\Nr)(\pa_1\Nr)N_j.
\]
This proves \eqref{eq:hierarchy-abstract} and also shows directly that no
unknown of order $j$ occurs in $F_j$.

Set
\[
 \ell_j=s+3(M-j)+5.
\]
The term $\pa_1^2\Phi_{j-1}$ loses two derivatives in $S_j$, and
$-\pa_1(S_j/\Nr)$ loses one more when the Poisson correction re-enters the
momentum equation.  Thus the hierarchy loses three spatial derivatives per
Debye level.  All other operations in
\eqref{eq:triangular-poisson}--\eqref{eq:Fj-momentum} are lower-order tame
products in one space dimension.  Hence the
inductive bounds
\begin{equation}
 \norm{S_j}_{H^{\ell_j+1}}
 +\norm{F_j}_{H^{\ell_j}}
 \le C_{j,t_0,T}
 \left(1+\sum_{m<j}
 \bigl(\norm{Y_m}_{H^{\ell_m}}
   +\norm{\Phi_m}_{H^{\ell_m}}\bigr)^{P_j}\right)
 \label{eq:corrector-source-bound}
\end{equation}
hold for a finite integer $P_j$.  The coefficients in this estimate are
bounded by \eqref{eq:rare-bounds}.

We next solve the linear hyperbolic problem.  With the symmetrizer
\begin{equation}
  S_r=\begin{pmatrix}q'(\Nr)/\Nr&0\\0&1\end{pmatrix}, \label{eq:profile-symmetrizer}
\end{equation}
$S_rA_r$ is symmetric for
\[
 A_r=\begin{pmatrix}\Vr&\Nr\\q'(\Nr)&\Vr\end{pmatrix}.
\]
Both $S_r$ and $S_r^{-1}$ are uniformly bounded because
$q'(N)=(p_i'(N)+1)/N>0$ on the state interval.  For
$0\le m\le\ell_j$, apply $\pa_1^m$ to
$\mathcal L_rY_j=F_j$, multiply by $S_r\pa_1^mY_j$, and integrate.  The
principal term satisfies
\begin{align}
 \int \pa_1^mY_j^{\mathsf T}S_rA_r\pa_1^{m+1}Y_j
 =-\frac12\int \pa_1^mY_j^{\mathsf T}
 \pa_1(S_rA_r)\pa_1^mY_j. \label{eq:corrector-principal-ibp}
\end{align}
The remaining differentiated terms are commutators.  In one dimension,
\begin{equation}
 \norm{[\pa_1^m,a]\pa_1Y_j}_2
 \le C_m\norm{\pa_1a}_{W^{m-1,\infty}}
 \norm{Y_j}_{H^m},
 \qquad m\ge1, \label{eq:corrector-commutator}
\end{equation}
where the right-hand side follows directly by expanding the commutator and
placing the fixed coefficient in $L^\infty$.  Thus no derivative of
$Y_j$ above order $m$ occurs.  Summing in $m$ and using Young's inequality gives
\begin{equation}
  \frac{\dd}{\dd t}\norm{Y_j}_{H^\ell}^2
  \le C_{t_0,T}\norm{Y_j}_{H^\ell}^2
  +C\norm{F_j}_{H^\ell}^2,
  \qquad 0\le\ell\le s+3(M-j)+5. \label{eq:corrector-energy}
\end{equation}

For completeness, existence in \eqref{eq:corrector-energy} can be obtained
without invoking a black-box smooth theory.  Let $J_\nu$ be an even Fourier
cutoff in $x_1$ and solve
\[
 \pa_tY_{j,\nu}+J_\nu(A_r\pa_1J_\nu Y_{j,\nu}
 +B_rJ_\nu Y_{j,\nu})=J_\nu F_j,
 \qquad Y_{j,\nu}(t_0)=0.
\]
Here $B_r$ is the lower-order coefficient matrix in $\mathcal L_r$.
This is an ordinary differential equation on the band-limited subspace of
$H^{\ell_j}$.  Estimate \eqref{eq:corrector-energy} is unchanged up to a
constant independent of $\nu$, because $J_\nu$ is self-adjoint and commutes
with $\pa_1$; moving it through a variable coefficient produces only the
uniform commutator \eqref{eq:cutoff-commutator}.  The differentiated
transport right-hand side is uniformly
bounded only in $H^{\ell_j-1}$.  Accordingly, the difference of two
regularizations is estimated in $H^{\ell_j-2}$, where \Cref{lem:appendix-cutoff} makes the cutoff forcing tend to zero.  Thus
$Y_{j,\nu}$ is Cauchy in $C([t_0,T];H^{\ell_j-2})$, bounded weak-star in
$L^\infty H^{\ell_j}$, and, by interpolation, converges strongly in
$C H^{\ell'}$ for every $\ell'<\ell_j$.  Passing to the linear equation and
then applying its symmetrized energy identity to the limit gives
$Y_j\in C([t_0,T];H^{\ell_j})$.

Gronwall, \eqref{eq:corrector-source-bound}, and the zero initial data close
the induction through $j=M$.  The equation gives
$\pa_tY_j\in C H^{\ell_j-1}$.  From
$\Phi_j=(N_j+S_j)/\Nr$, the already constructed lower levels, and the two
derivatives displayed in \eqref{eq:triangular-poisson}, one obtains
\begin{equation}
 \Phi_j\in C H^{\ell_j},\qquad
 \pa_t\Phi_j\in C H^{\ell_j-1}.
 \label{eq:corrector-time-regularity}
\end{equation}

All terms in \eqref{eq:Fj-mass}--\eqref{eq:Fj-momentum} contain a derivative
of the simple wave or a lower corrector.  They therefore belong to $L^2(\R)$
and vanish at spatial infinity.  Since $Y_j,\Phi_j\in H^{\ell_j}(\R)$,
their continuous representatives and all
derivatives used in the construction tend to zero as $x_1\to\pm\infty$.
This proves the far-field assertion.

The bounds just proved are precisely
\eqref{eq:remainder-coefficient-bound}.  Apply
\Cref{lem:profile-remainder} with $z=\eps^2$.
Because $z^{M+1}=\eps^{2M+2}$, it gives all four estimates in
\eqref{eq:residual-bound}, including the $H^{s+1}$ Poisson residual and its
time derivative in $H^s$.  Finally,
$\norm{n^a-\Nr}_\infty\le C\eps^2$; choosing $\eps_0$ small gives the
stated density interval and completes the proof.
\end{proof}

\begin{remark}
The integer $M$ is not tied to the nonlinear argument.  Taking $M=0$ yields a
second-order residual and a first-order quasineutral theorem; increasing $M$
improves the expansion without changing the compensated energy.
\end{remark}

\subsection{The nonlinear screened Poisson resolver}\label{sec:poisson}

Let $(n^a,\phi^a)$ be the profile of \Cref{prop:composite}.  For a density
perturbation $r$, the potential perturbation is determined by
\begin{equation}
  -\eps^2\Delta\psi+e^{\phi^a}(e^\psi-1)=r+\cR_\phi^a,
  \qquad \psi(x_1,x_2)\to0\quad(x_1\to\pm\infty). \label{eq:nonlinear-poisson}
\end{equation}

For $m\ge0$ define
\begin{equation}
  \norm\psi_{\cG^m_\eps}
  =\norm\psi_{H^m}+\eps\norm{\nabla\psi}_{H^m}. \label{eq:potential-graph-norm}
\end{equation}

\begin{lemma}[Uniform screened inverse]\label{lem:linear-screened}
Let $a\in W^{m,\infty}(\Om)$ satisfy $0<a_*\le a\le a^*$.  For every
$f\in H^m(\Om)$ and $0<\eps\le1$, the equation
\begin{equation}
  -\eps^2\Delta z+az=f \label{eq:linear-screened}
\end{equation}
has a unique solution with
\begin{equation}
  \norm z_{\cG^m_\eps}\le
  C(a_*,\norm a_{W^{m,\infty}})\norm f_{H^m}. \label{eq:linear-screened-estimate}
\end{equation}
The constant does not depend on $\eps$.
\end{lemma}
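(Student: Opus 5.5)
We would prove the lemma by Lax--Milgram for existence and uniqueness, followed by an induction on $m$ using difference quotients or mollified derivatives to obtain the $\eps$-uniform graph estimate \eqref{eq:linear-screened-estimate}.

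\emph{Existence and the $m=0$ bound.} Work in $H^1(\Om)$ with the bilinear form
\[
B_\eps(z,w)=\int_\Om\bigl(\eps^2\nabla z\cdot\nabla w+azw\bigr)\dd x .
\]
Since $a\ge a_*>0$, the form is bounded and coercive on $H^1$ for each fixed $\eps>0$. Lax--Milgram then gives a unique weak solution $z\in H^1$. Elliptic regularity on $\R\times\T$, which follows from Fourier analysis after moving $az$ to the right-hand side, upgrades $z$ to $H^{m+2}$ when $f\in H^m$; this qualitative regularity need not be uniform in $\eps$. Testing with $z$ gives
\[
\eps^2\norm{\nabla z}_2^2+a_*\norm z_2^2\le\norm f_2\norm z_2 ,
\]
hence $\norm z_2+\eps\norm{\nabla z}_2\le C(a_*)\norm f_2$. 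Uniqueness follows from the same identity applied to the difference of two solutions.

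\emph{Higher derivatives.} For $1\le\abs\alpha\le m$, apply $\pa^\alpha$ to \eqref{eq:linear-screened}:
\[
-\eps^2\Delta\pa^\alpha z+a\pa^\alpha z=\pa^\alpha f-\sum_{0<\beta\le\alpha}\binom\alpha\beta\pa^\beta a\,\pa^{\alpha-\beta}z .
\]
Test this against $\pa^\alpha z$. The commutator terms contain at most $\abs\alpha-1$ derivatives of $z$ multiplied by $L^\infty$ coefficients bounded by $\norm a_{W^{m,\infty}}$. Their $L^2$ norm is therefore at most $C\norm z_{H^{\abs\alpha-1}}$, and no $\eps^{-1}$ appears because no gradient of $z$ is involved. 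This yields
\[
\norm{\pa^\alpha z}_2+\eps\norm{\nabla\pa^\alpha z}_2\le C\bigl(\norm{\pa^\alpha f}_2+\norm z_{H^{\abs\alpha-1}}\bigr).
\]
Inducting on $\abs\alpha$ and summing over multi-indices gives \eqref{eq:linear-screened-estimate}.

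\emph{Main obstacle.} The delicate point is keeping the argument free of $\eps$-dependent regularity. The qualitative step ensures $\pa^\alpha z\in H^1$, so the test function is legitimate. The quantitative step must use only the coercive zeroth-order term $a\pa^\alpha z$; the $\eps^2$-weighted gradient may only ever be retained on the left-hand side, never used to absorb anything. If $a$ were less regular, one would instead test with $\pa^{-h}\pa^h$ difference quotients, but $W^{m,\infty}$ suffices for the direct computation.
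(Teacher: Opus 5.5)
Your proposal is correct and follows the paper's proof. Both use Lax--Milgram with the coercive order-zero bound, then commute $\pa^\alpha$ through the equation, bound $[\pa^\alpha,a]z$ in $L^2$ by $C\norm a_{W^{m,\infty}}\norm z_{H^{\abs\alpha-1}}$, pair with $\pa^\alpha z$ using only the screening term for absorption, and induct on $\abs\alpha$. The only difference is how the differentiated pairing is justified: you get $z\in H^{m+2}$ at fixed $\eps$ by bootstrapping $\Delta z=\eps^{-2}(az-f)$ on the cylinder, whereas the paper computes with smooth $a,f$ and passes to the limit via approximation, difference quotients, and weak lower semicontinuity; your route is slightly more direct because it never approximates the coefficient $a$.
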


\begin{proof}
On $H^1(\Om)$ consider
\[
 \mathfrak b_\eps(z,\chi)
 =\eps^2\ip{\nabla z}{\nabla\chi}+\ip{az}{\chi}.
\]
It is continuous and satisfies
$\mathfrak b_\eps(z,z)\ge
\eps^2\norm{\nabla z}_2^2+a_*\norm z_2^2$.
Lax--Milgram therefore gives a unique weak solution and, after pairing with
$z$ and applying Young's inequality,
\begin{equation}
 \norm z_2+\eps\norm{\nabla z}_2\le C(a_*)\norm f_2.
 \label{eq:linear-screened-order-zero}
\end{equation}

We justify the differentiated estimate first for smooth $a,f,z$.  For
$1\le\abs\alpha\le m$,
\begin{equation}
 -\eps^2\Delta\pa^\alpha z+a\pa^\alpha z
 =\pa^\alpha f-[\pa^\alpha,a]z. 
 \label{eq:linear-screened-commuted}
\end{equation}
Pairing with $\pa^\alpha z$ and using
\begin{equation}
 \norm{[\pa^\alpha,a]z}_2
 \le C\norm a_{W^{m,\infty}}\norm z_{H^{\abs\alpha-1}} \label{eq:linear-commutator}
\end{equation}
gives
\begin{align}
 a_*\norm{\pa^\alpha z}_2^2
 +\eps^2\norm{\nabla\pa^\alpha z}_2^2
 \le{}&\norm{\pa^\alpha f}_2\norm{\pa^\alpha z}_2
 +C\norm a_{W^{m,\infty}}
 \norm z_{H^{\abs\alpha-1}}\norm{\pa^\alpha z}_2.
 \label{eq:linear-screened-level}
\end{align}
Choose the Young parameter smaller than $a_*/4$, sum over the multi-indices
of a fixed order, and use \eqref{eq:linear-screened-order-zero}.  Induction
on the order yields
\begin{equation}
 \norm z_{H^k}^2+\eps^2\norm{\nabla z}_{H^k}^2
 \le C_k\norm f_{H^k}^2,
 \qquad 0\le k\le m, \label{eq:linear-screened-induction}
\end{equation}
where $C_k$ is independent of $\eps$.

For nonsmooth data, take $f_\nu\in C_c^\infty(\R\times\T)$ converging to
$f$ in $H^m$ and smooth coefficients $a_\nu$ with the same positive lower
bound and uniformly bounded $W^{m,\infty}$ norm.  The corresponding weak
solutions are Cauchy in $H^1$ by
\eqref{eq:linear-screened-order-zero} and uniformly bounded by
\eqref{eq:linear-screened-induction}.  Difference quotients in $x_1$ and
$x_2$, followed by weak lower semicontinuity, pass the $H^m$ estimate to the
limit on the cylinder.  The same order-zero pairing applied to the
difference of two solutions proves uniqueness.
\end{proof}

\begin{proposition}[Uniform nonlinear resolver]\label{prop:poisson-resolver}
Let $s\ge3$.  There are $\eta_P>0$ and $C>0$, independent of
$0<\eps\le\eps_0$, with the following property.  If
$r\in H^s(\Om)$, $\norm r_{H^s}\le\eta_P$, and $n^a+r$ stays positive,
then \eqref{eq:nonlinear-poisson} has a unique solution
$\psi\in H^s(\Om)$ with $\eps\nabla\psi\in H^s(\Om)$ and
\begin{equation}
  \norm\psi_{H^s}+\eps\norm{\nabla\psi}_{H^s}
  \le C\bigl(\norm r_{H^s}+\norm{\cR_\phi^a}_{H^s}\bigr). \label{eq:poisson-estimate}
\end{equation}
The map $r\mapsto\psi$ is locally Lipschitz in the same norms.  Moreover, if
$r\in H^{s}$ and the fluid equations give $\pa_tr\in H^{s-1}$, then the
time-differentiated equation gives
\begin{equation}
  \norm{\pa_t\psi}_{H^{s-1}}
  +\eps\norm{\nabla\pa_t\psi}_{H^{s-1}}
  \le C\bigl(\norm{\pa_tr}_{H^{s-1}}+\norm r_{H^s}
  +\norm{\pa_t\cR_\phi^a}_{H^{s-1}}\bigr). \label{eq:poisson-time}
\end{equation}
\end{proposition}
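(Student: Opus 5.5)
We will solve \eqref{eq:nonlinear-poisson} by a contraction argument built on \Cref{lem:linear-screened}, with the coefficient frozen at the profile value $a=e^{\phi^a}$. This coefficient is bounded above and below, since $n^a\in[3n_*/4,5n^*/4]$ and $\phi^a=\log\Nr+O(\eps^2)$. Its $W^{s,\infty}$ norm is bounded uniformly in $\eps$ by \eqref{eq:profile-bound}.

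Rewrite the equation as
\[
-\eps^2\Delta\psi+e^{\phi^a}\psi=r+\cR_\phi^a-e^{\phi^a}\bigl(e^\psi-1-\psi\bigr)=:f(\psi).
\]
Define $\mathcal T(\chi)$ as the solution of $-\eps^2\Delta z+e^{\phi^a}z=f(\chi)$. Work on the ball $B_\rho=\{\chi:\norm\chi_{\cG^s_\eps}\le\rho\}$ with $\rho=2C_L(\norm r_{H^s}+\norm{\cR_\phi^a}_{H^s})$, where $C_L$ is the constant of \eqref{eq:linear-screened-estimate}. Two facts make this work. First, $H^s(\Om)$ is an algebra for $s\ge2$ in two dimensions. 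Second, the Moser estimate for $g(\chi)=e^\chi-1-\chi$ gives
\[
\norm{g(\chi)}_{H^s}\le C(\norm\chi_{L^\infty})\norm\chi_{L^\infty}\norm\chi_{H^s}.
\]
The correction term $e^{\phi^a}g(\chi)$ is therefore quadratic, with $H^s$ norm at most $C\rho^2$. Likewise
\[
\norm{g(\chi_1)-g(\chi_2)}_{H^s}\le C\rho\norm{\chi_1-\chi_2}_{H^s}.
\]
Only the $H^s$ part of the graph norm enters these nonlinear bounds; the $\eps\nabla$ part is produced for free by \Cref{lem:linear-screened}. Since \eqref{eq:residual-bound} makes $\norm{\cR_\phi^a}_{H^s}$ small, choosing $\eta_P$ and $\eps_0$ small makes $\rho$ small. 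Then $\mathcal T$ maps $B_\rho$ into itself and is a $\tfrac12$-contraction. The fixed point gives existence and \eqref{eq:poisson-estimate}.

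Uniqueness is needed among all $H^s$ solutions, not only those in $B_\rho$. For this, subtract two solutions and write the difference equation with the screened coefficient $A^a(\psi_1,\psi_2)=e^{\phi^a}\int_0^1e^{\psi_2+\theta(\psi_1-\psi_2)}\dd\theta$, which is positive. The order-zero pairing of \Cref{lem:linear-screened} then forces the difference to vanish. Local Lipschitz dependence on $r$ follows from the same contraction, or from subtracting the equations for $r_1,r_2$ and applying \Cref{lem:linear-screened} with coefficient $A^a$. That coefficient lies in $W^{s-2,\infty}$ with bounded norm by Sobolev embedding. This loses derivatives, so at full order we instead keep the frozen coefficient $e^{\phi^a}$ and treat the difference of the nonlinearities as a small source.

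For \eqref{eq:poisson-time}, differentiate the equation in time:
\[
-\eps^2\Delta\pa_t\psi+e^{\phi^a+\psi}\pa_t\psi=\pa_tr+\pa_t\cR_\phi^a-\pa_t\phi^a\,e^{\phi^a}(e^\psi-1).
\]
Apply \Cref{lem:linear-screened} with $m=s-1$ and coefficient $a=e^{\phi^a+\psi}$. The main obstacle is this step: we need $\norm a_{W^{s-1,\infty}}$, which is not controlled by $\psi\in H^s$ in two dimensions.

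We would try first to avoid that norm. Rewrite the lower-order term as $e^{\phi^a}(e^\psi-1)\pa_t\psi$, move it to the right-hand side, and solve with the frozen coefficient $e^{\phi^a}$. This gives the bound
\[
\norm{(e^\psi-1)\pa_t\psi}_{H^{s-1}}\le C\norm\psi_{H^s}\norm{\pa_t\psi}_{H^{s-1}},
\]
valid since $H^{s-1}$ is an algebra for $s-1\ge2$. This term is absorbed by smallness of $\eta_P$. The last forcing term is bounded by $C\norm\psi_{H^s}$, which \eqref{eq:poisson-estimate} controls by $\norm r_{H^s}+\norm{\cR_\phi^a}_{H^s}$. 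Finally, the graph-norm bound $\eps\norm{\nabla\pa_t\psi}_{H^{s-1}}$ is supplied directly by \eqref{eq:linear-screened-estimate}.

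Rigorously, this requires knowing a priori that $\pa_t\psi$ exists in $H^{s-1}$. We obtain it from the Lipschitz dependence on $r$ applied to difference quotients in $t$, combined with the absorbed estimate, and then pass to the limit.
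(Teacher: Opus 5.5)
Your proposal is correct and follows essentially the same route as the paper. Both solve $\psi=L_\eps^{-1}\bigl[r+\cR_\phi^a-e^{\phi^a}(e^\psi-1-\psi)\bigr]$ by contraction with the coefficient frozen at $e^{\phi^a}$, obtain Lipschitz dependence by subtracting the fixed-point identities rather than using the mean-value coefficient, and prove the time estimate by moving $e^{\phi^a}(e^\psi-1)\pa_t\psi$ to the right-hand side and absorbing it by smallness.

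Two of your remarks refine points the paper leaves implicit. Pointwise positivity of the mean-value coefficient gives uniqueness among all $H^s$ solutions, whereas the paper argues only for small ones. You also note that existence of $\pa_t\psi$ should first be justified by difference quotients. That step must be run at the $H^{s-1}$ level, since $r(t+h)-r(t)$ is only $O(h)$ there and $\phi^a$ also moves with $t$.
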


\begin{proof}
Let $L_\eps=-\eps^2\Delta+e^{\phi^a}$ and $f=r+\cR_\phi^a$.  The equation is
equivalent to the fixed-point problem
\begin{equation}
  \psi=L_\eps^{-1}\left[f-e^{\phi^a}
  (e^\psi-1-\psi)\right]=:\mathcal T_f(\psi). \label{eq:poisson-fixed-point}
\end{equation}
The profile bounds imply
$e^{\phi^a}\ge a_*>0$ and
$\norm{e^{\phi^a}}_{W^{s,\infty}}\le C$, uniformly in $\eps$.
For $s\ge3$, $H^s(\Om)$ is a Banach algebra and embeds into
$W^{1,\infty}(\Om)$.  Taylor's formula
\begin{equation}
 e^z-1-z=z^2\int_0^1(1-\theta)e^{\theta z}\,\dd\theta
 \label{eq:exp-quadratic-remainder}
\end{equation}
and the Moser composition estimate give, whenever
$\norm{\psi_i}_{H^s}\le1$,
\begin{align}
 \norm{e^\psi-1-\psi}_{H^s}
 &\le C\norm\psi_{H^s}^2, \label{eq:exp-quadratic-Moser}\\
 \norm{(e^{\psi_1}-1-\psi_1)
 -(e^{\psi_2}-1-\psi_2)}_{H^s}
 &\le C(\norm{\psi_1}_{H^s}+\norm{\psi_2}_{H^s})
 \norm{\psi_1-\psi_2}_{H^s}. 
 \label{eq:exp-Lipschitz-Moser}
\end{align}
Consequently, \Cref{lem:linear-screened} yields
\begin{align}
 \norm{\mathcal T_f(\psi)}_{\cG^s_\eps}
 &\le C\bigl(\norm f_{H^s}+\norm\psi_{H^s}^2\bigr),
 \label{eq:map-ball}\\
 \norm{\mathcal T_f(\psi_1)-\mathcal T_f(\psi_2)}_{\cG^s_\eps}
 &\le C(\norm{\psi_1}_{H^s}+\norm{\psi_2}_{H^s})
 \norm{\psi_1-\psi_2}_{\cG^s_\eps}. \label{eq:map-contraction}
\end{align}
Let $C_L$ be the larger constant in
\eqref{eq:map-ball}--\eqref{eq:map-contraction} and set
$R_f=2C_L\norm f_{H^s}$.  If $R_f\le1$ and $2C_LR_f\le1/2$, then
$\mathcal T_f$ maps the closed graph-norm ball of radius $R_f$ into itself
and has Lipschitz constant at most $1/2$.  Banach's theorem gives a unique
fixed point in this ball and proves \eqref{eq:poisson-estimate}.  Notice that
the smallness threshold depends on the profile bounds but not on $\eps$.

Uniqueness does not depend on the fixed-point representation.  If
$\psi_1,\psi_2$ are two small solutions and $z=\psi_1-\psi_2$, then
\begin{equation}
 -\eps^2\Delta z+e^{\phi^a}
 \left(\int_0^1e^{\theta\psi_1+(1-\theta)\psi_2}\,\dd\theta\right)z=0.
 \label{eq:poisson-uniqueness}
\end{equation}
The coefficient is positive and bounded away from zero.  Pairing with $z$
gives $z=0$.  For two right-hand sides, subtract the two fixed-point
identities and use \eqref{eq:map-contraction}; the contraction term is
absorbed to obtain
\begin{equation}
 \norm{\psi_1-\psi_2}_{\cG^s_\eps}
 \le 2C_L\norm{f_1-f_2}_{H^s}. 
 \label{eq:poisson-Lipschitz}
\end{equation}
This avoids requiring $W^{s,\infty}$ regularity of the nonlinear screening
coefficient in \eqref{eq:poisson-uniqueness}.

Finally differentiate \eqref{eq:nonlinear-poisson}:
\begin{equation}
 \left(-\eps^2\Delta+e^{\phi^a+\psi}\right)\pa_t\psi
 =\pa_tr+\pa_t\cR_\phi^a
 -(\pa_t\phi^a)e^{\phi^a}(e^\psi-1). \label{eq:poisson-time-equation}
\end{equation}
To retain the smooth profile coefficient in the linear inverse, rewrite this
equation as
\begin{equation}
 L_\eps\pa_t\psi
 =\pa_tr+\pa_t\cR_\phi^a
 -(\pa_t\phi^a)e^{\phi^a}(e^\psi-1)
 -e^{\phi^a}(e^\psi-1)\pa_t\psi. 
 \label{eq:poisson-time-fixed-coefficient}
\end{equation}
Because $s-1\ge2$, the $H^{s-1}$ product estimate and
\eqref{eq:poisson-estimate} imply
\[
 \norm{e^{\phi^a}(e^\psi-1)\pa_t\psi}_{H^{s-1}}
 \le C\norm\psi_{H^s}\norm{\pa_t\psi}_{H^{s-1}}.
\]
Apply \eqref{eq:linear-screened-estimate} to
\eqref{eq:poisson-time-fixed-coefficient}.  After reducing $\eta_P$ if
necessary, the last display is absorbed on the left.  The remaining profile
term is bounded by $C\norm\psi_{H^s}$, and
\eqref{eq:poisson-estimate} gives \eqref{eq:poisson-time}.  At no point is an
unweighted derivative of order $s+1$ estimated, so no division by $\eps$
occurs.
\end{proof}

\begin{corollary}[Compatible initial potential]\label{cor:initial-potential}
Let $n_0=n^a(t_0)+r_0$ with $r_0\in H^s$, $\norm{r_0}_{H^s}$ sufficiently
small, and $n_0>0$.  Then there is a unique
$\phi_0=\phi^a(t_0)+\psi_0$ satisfying the Poisson equation and the prescribed
far fields.  Moreover,
\begin{equation}
  \norm{\psi_0}_{\cG^s_\eps}
  \le C\left(\norm{r_0}_{H^s}
  +\norm{\cR_\phi^a(t_0)}_{H^s}\right). \label{eq:initial-potential-bound}
\end{equation}
Thus $r_0$ and $v_0$ are the freely chosen perturbative data; $\psi_0$ is a
constraint variable.
\end{corollary}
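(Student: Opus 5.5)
This corollary is a direct specialization of \Cref{prop:poisson-resolver} to the initial time $t=t_0$. Set $r=r_0$ and $f=r_0+\cR_\phi^a(t_0)$. Writing $\phi_0=\phi^a(t_0)+\psi_0$, the Poisson equation \eqref{eq:EP-poisson} at $t_0$ is equivalent to \eqref{eq:nonlinear-poisson} at $t_0$. To see this, subtract the identity $\eps^2\Delta\phi^a-e^{\phi^a}+n^a=\cR_\phi^a$, which is the definition \eqref{eq:Rphi}, and use $e^{\phi^a+\psi}-e^{\phi^a}=e^{\phi^a}(e^\psi-1)$. The prescribed far fields $\phi_0\to\log n_\pm$ correspond to $\psi_0\to0$ as $x_1\to\pm\infty$. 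This uses that $\phi^a(t_0)$ tends to $\log n_\pm$, which holds because $\Phi_0=\log\Nr$ and all correctors vanish at infinity by \Cref{prop:composite}.

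For existence, the smallness hypothesis of \Cref{prop:poisson-resolver} has to hold with a threshold independent of $\eps$. We require $\norm{r_0}_{H^s}\le\eta_P$ and also that $\norm f_{H^s}$ is below the contraction threshold $1/(4C_L)$. By \eqref{eq:residual-bound}, $\norm{\cR_\phi^a(t_0)}_{H^s}\le C\eps^{2M+2}\le C\eps_0^{2}$. After shrinking $\eps_0$ if necessary, the residual part of $f$ is harmless, and smallness of $r_0$ alone suffices. Positivity of $n_0$ is assumed. The resolver then gives a solution $\psi_0\in H^s$ with $\eps\nabla\psi_0\in H^s$. Estimate \eqref{eq:poisson-estimate} is exactly \eqref{eq:initial-potential-bound}. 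Since $s\ge3$, the Sobolev embedding on $\R\times\T$ shows that $\psi_0$ is continuous and tends to zero as $\abs{x_1}\to\infty$, so the far-field condition holds.

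For uniqueness, suppose $\tilde\phi_0=\phi^a(t_0)+\tilde\psi_0$ is a second solution with the same far fields. In the class of $\psi$ with $\psi\in H^1$, or even just $\psi\in L^\infty$ decaying at infinity, the difference $z=\psi_0-\tilde\psi_0$ satisfies the linear equation \eqref{eq:poisson-uniqueness}. Its coefficient is $e^{\phi^a}\int_0^1e^{\theta\psi_0+(1-\theta)\tilde\psi_0}\,\dd\theta$. This coefficient is strictly positive, though a lower bound needs $\tilde\psi_0$ bounded below. Pairing with $z$ gives $\eps^2\norm{\nabla z}_2^2+\int(\text{coef})z^2=0$, hence $z=0$. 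This is the step I expect to be the only subtle point: the statement claims uniqueness without restricting to the small ball. I would handle it by this monotonicity argument. The map $\psi\mapsto e^{\phi^a}(e^\psi-1)$ is strictly increasing, so the pairing $\ip{e^{\phi^a}(e^{\psi_0}-e^{\tilde\psi_0})}{\psi_0-\tilde\psi_0}\ge0$ with equality only if $\psi_0=\tilde\psi_0$. Therefore uniqueness holds among all $H^1$ solutions, with no smallness and no lower bound needed. The remark that $\psi_0$ is a constraint variable is then just a restatement: $\psi_0$ is determined by $r_0$, and $r_0$, $v_0$ are free.
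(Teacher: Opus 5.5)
Your proposal is correct and follows the paper's approach: the paper gives this corollary no separate proof and treats it as \Cref{prop:poisson-resolver} evaluated at $t=t_0$, with the residual part of $f=r_0+\cR_\phi^a(t_0)$ kept small by the choice of $\eps_0$, exactly as you argue. Your uniqueness step uses the same pairing as \eqref{eq:poisson-uniqueness}, but you rightly note that pointwise positivity of $(e^{a}-e^{b})(a-b)$ is enough. That gives uniqueness among all $H^1$ solutions, not only the small ones the paper considers; for your merely bounded, decaying class you would instead need a maximum-principle argument, since the $L^2$ pairing requires $z\in H^1$.
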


\begin{corollary}[Nonempty prepared-data class]\label{cor:prepared-data}
Take $n_0=n^a(t_0)$ and $u_0=u^a(t_0)$, and let $\phi_0$ be the compatible
potential.  Then
\begin{equation}
  \norm{\phi_0-\phi^a(t_0)}_{\cG^s_\eps}
  \le C\eps^{2M+2},\qquad
  \cE_{s,\eps}(t_0)\le C\eps^{4M+4}. \label{eq:canonical-prepared-data}
\end{equation}
The same conclusion holds after adding $H^s$ density and velocity
perturbations of size $O(\eps^{2M+2})$.
\end{corollary}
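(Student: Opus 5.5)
With $r_0=0$ and $v_0=0$ (i.e. $n_0=n^a(t_0)$, $u_0=u^a(t_0)$), we apply \Cref{cor:initial-potential} and read off the bounds from the residual estimate of \Cref{prop:composite}. The smallness hypothesis of \Cref{cor:initial-potential} holds trivially, so there is a unique compatible $\psi_0$. Then \eqref{eq:initial-potential-bound} gives
\[
\norm{\psi_0}_{\cG^s_\eps}\le C\norm{\cR_\phi^a(t_0)}_{H^s}\le C\eps^{2M+2},
\]
using \eqref{eq:residual-bound}. This is the first claim in \eqref{eq:canonical-prepared-data}. For the perturbed version, with $\norm{r_0}_{H^s}+\norm{v_0}_{H^s}\le C\eps^{2M+2}$, the same estimate gives $\norm{\psi_0}_{\cG^s_\eps}\le C\eps^{2M+2}$. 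For $\eps_0$ small we also have $n_0=n^a(t_0)+r_0\in[n_*/2,3n^*/2]$, since $n^a\in[3n_*/4,5n^*/4]$ and $H^s\hookrightarrow L^\infty$.

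Next we bound each piece of $\cE_{s,\eps}$ in \eqref{eq:energy-definition} quadratically by the graph norm of $(r_0,v_0,\psi_0)$.

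\emph{Fluid part.} The coefficients $n$ and $h_i'(n)$ are bounded on the compact state range. Hence the fluid part is at most $C(\norm{r_0}_{H^s}^2+\norm{v_0}_{H^s}^2)$.

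\emph{Differentiated electric forms.} We have $A=e^{\phi^a+\psi_0}\le C$, because $\phi^a$ is bounded by \eqref{eq:profile-bound} and $\norm{\psi_0}_{L^\infty}\le C\norm{\psi_0}_{H^s}\le1$. So these terms are at most $C(\norm{\psi_0}_{H^s}^2+\eps^2\norm{\nabla\psi_0}_{H^s}^2)$.

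\emph{Zeroth-order electron entropy.} This is the one term needing a pointwise argument. We have $\cH_e^*(\psi;\phi^a)=e^{\phi^a}g(\psi)$ with $g(\psi)=e^\psi(\psi-1)+1$. Since $g(0)=g'(0)=0$ and $g''(\psi)=e^\psi(\psi+1)$, we get $0\le g(\psi)\le C\psi^2$ for $\abs\psi\le1$. Together with $\norm{\psi_0}_{L^\infty}\lesssim\eps^{2M+2}$, this gives $\int\cH_e^*\le C\norm{\psi_0}_{L^2}^2$. The term $\frac{\eps^2}{2}\norm{\nabla\psi_0}_2^2$ is controlled directly by the graph norm.

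Summing the three pieces,
\[
\cE_{s,\eps}(t_0)\le C\bigl(\norm{r_0}_{H^s}+\norm{v_0}_{H^s}+\norm{\psi_0}_{\cG^s_\eps}\bigr)^2\le C\eps^{4M+4}.
\]
This proves \eqref{eq:canonical-prepared-data}. There is no real obstacle here. The only points needing care are the uniform-in-$\eps$ $L^\infty$ smallness of $\psi_0$, which keeps $A$ and $\cH_e^*$ in their quadratic regime, and the $\eps$-independence of the constant in \Cref{cor:initial-potential}, which is inherited from \Cref{prop:poisson-resolver}.
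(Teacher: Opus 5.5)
Your proof is correct and follows the paper's route: set $r_0=v_0=0$ in \eqref{eq:initial-potential-bound}, use \eqref{eq:residual-bound}, and bound the energy by the graph norm. The differences are cosmetic. You verify the upper half of \Cref{lem:energy-equivalence} term by term instead of citing it. For the perturbed data you put $r_0$ directly into \eqref{eq:initial-potential-bound}, where the paper invokes the Lipschitz estimate \eqref{eq:poisson-Lipschitz}; both give the same bound.
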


\begin{proof}
Set $r_0=v_0=0$ in \eqref{eq:initial-potential-bound} and use
\eqref{eq:residual-bound}.  Energy equivalence gives the second estimate.
The Lipschitz part of \Cref{prop:poisson-resolver} handles the stated
perturbations.
\end{proof}

\begin{lemma}[Commuted elliptic relation]\label{lem:commuted-poisson}
For $1\le\abs\alpha\le s$, set $A=e^{\phi^a+\psi}$.  Then
\begin{equation}
  \pa^\alpha r=-\eps^2\Delta\pa^\alpha\psi
  +A\pa^\alpha\psi+\mathcal C_\alpha-\pa^\alpha\cR_\phi^a, \label{eq:commuted-poisson}
\end{equation}
where
\begin{equation}
  \norm{\mathcal C_\alpha}_{2}
  \le C K_a\norm\psi_{H^{\abs\alpha-1}}
  +C\norm\psi_{H^{s-1}}\norm\psi_{H^s}. \label{eq:C-alpha}
\end{equation}
In addition,
\begin{align}
 \norm{\nabla\mathcal C_\alpha}_2
 &\le CK_a\norm\psi_{H^{\abs\alpha}}
 +C\norm\psi_{H^s}^2, \label{eq:grad-C-alpha}\\
 \sum_{1\le\abs\alpha\le s}\norm{\pa_t\mathcal C_\alpha}_2
 &\le C(K_a+\norm\psi_{H^s})
 \left(\norm\psi_{H^s}+\norm{\pa_t\psi}_{H^{s-1}}\right).
 \label{eq:time-C-alpha}
\end{align}
Thus the time-differentiated relation contains no unweighted top derivative
of $\nabla\psi$.
\end{lemma}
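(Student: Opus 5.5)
The plan is to apply $\pa^\alpha$ to the nonlinear difference equation \eqref{eq:nonlinear-poisson}, written as $r=-\eps^2\Delta\psi+e^{\phi^a}(e^\psi-1)-\cR_\phi^a$, and to read off $\mathcal C_\alpha$ as an explicit commutator. Writing $g(\psi)=e^{\phi^a}(e^\psi-1)=e^{\phi^a+\psi}-e^{\phi^a}$, we have
\[
 \pa^\alpha g=A\pa^\alpha\psi+\Bigl(\pa^\alpha e^{\phi^a+\psi}-e^{\phi^a+\psi}\pa^\alpha\psi-\pa^\alpha e^{\phi^a}\Bigr),
\]
so we define $\mathcal C_\alpha$ to be the bracket. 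Then \eqref{eq:commuted-poisson} holds identically, since $\pa^\alpha$ commutes with $\Delta$. By Fa\`a di Bruno, $\mathcal C_\alpha$ is a sum of terms $e^{\phi^a+\psi}\prod_i\pa^{\beta_i}(\phi^a+\psi)$ with $\sum\beta_i=\alpha$, minus the pure-profile terms of $\pa^\alpha e^{\phi^a}$, and the single term $\pa^\alpha\psi$ is excluded. We split the terms into three groups. The first contains terms with no $\psi$ factor; these cancel exactly against $\pa^\alpha e^{\phi^a}$ once the prefactor is expanded as $e^{\phi^a}+e^{\phi^a}(e^\psi-1)$. The second contains terms linear in $\psi$, of the form $e^{\phi^a}\,(\text{profile derivatives})\,\pa^\beta\psi$ with $|\beta|\le|\alpha|-1$, together with the term $e^{\phi^a}(e^\psi-1)\cdot(\text{profile})$. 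The third contains terms at least quadratic in $\psi$.

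For \eqref{eq:C-alpha}, the linear group is bounded by $K_a\norm\psi_{H^{|\alpha|-1}}$, where $K_a$ denotes the $W^{s+1,\infty}$-type profile bound from \eqref{eq:profile-bound}; here $e^\psi-1$ is controlled pointwise by $|\psi|$. For the quadratic group, each product carries at least two $\psi$-derivative factors of order at most $|\alpha|-1\le s-1$, or else a factor $e^\psi-1$. The tame estimate of \Cref{lem:appendix-tame}, together with $H^{s-1}\hookrightarrow L^\infty$ for $s\ge5$ in two dimensions, puts the highest factor in $L^2$ with at most $s$ derivatives and the rest in $L^\infty$ controlled by $\norm\psi_{H^{s-1}}$. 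This gives $C\norm\psi_{H^{s-1}}\norm\psi_{H^s}$. The key structural point is that $\pa^\alpha\psi$ never multiplies another $\psi$-factor in $\mathcal C_\alpha$, since it was extracted into $A\pa^\alpha\psi$.

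For \eqref{eq:grad-C-alpha}, we apply $\nabla$ to each term. This raises the maximal order on $\psi$ to $|\alpha|$, which is still at most $s$, so the linear part gives $K_a\norm\psi_{H^{|\alpha|}}$ and the quadratic part gives $\norm\psi_{H^s}^2$ by the same high--low allocation. The one point needing care is a term such as $\pa^{\beta}\psi\,\nabla\pa^{\gamma}\psi$ with $|\beta|+|\gamma|=|\alpha|$ and both orders positive. It is fine because the total order is $|\alpha|+1\le s+1$ split over two factors, each of order at most $s$.

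For \eqref{eq:time-C-alpha}, we differentiate the explicit expression in $t$. A time derivative either falls on the profile, which costs only $K_a$ since $\pa_t\phi^a$ is bounded by \eqref{eq:profile-bound}, or falls on some $\pa^\beta\psi$ with $|\beta|\le|\alpha|-1\le s-1$, producing $\pa^\beta\pa_t\psi$. It may also fall on $e^\psi$, producing $\pa_t\psi$. In every case the derivative count on $\pa_t\psi$ is at most $s-1$, so the term is bounded by $\norm{\pa_t\psi}_{H^{s-1}}$ times $K_a+\norm\psi_{H^s}$, with lower factors placed in $L^\infty$.

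The main obstacle is exactly this last bookkeeping: we must check that no term in $\pa_t\mathcal C_\alpha$ needs $\pa_t\psi$ in $H^s$ or $\nabla\psi$ in $H^s$ without the weight $\eps$. This holds because $\mathcal C_\alpha$ contains only derivatives of order $\le|\alpha|-1$ of $\psi$, which is also the content of the closing sentence of the lemma. When $|\alpha|=s$, a product of $\pa^\beta\pa_t\psi$ (with $|\beta|=s-1$) and another $\psi$-factor must have that factor in $L^\infty$, which holds since its order is at most $1$. This yields a uniform $\eps$-independent constant.
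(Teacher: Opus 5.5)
Your proposal is correct and follows essentially the same route as the paper. You extract the single top term $A\pa^\alpha\psi$ from the chain-rule (Fa\`a di Bruno) expansion of $e^{\phi^a}(e^\psi-1)$, note that every remaining $\psi$-factor in $\mathcal C_\alpha$ carries at most $\abs\alpha-1$ derivatives, and close the bounds for $\mathcal C_\alpha$, $\nabla\mathcal C_\alpha$ and $\pa_t\mathcal C_\alpha$ by putting the highest-order factor in $L^2$ and the rest in $L^\infty$ via $H^{s-2}\hookrightarrow L^\infty$. (Two cosmetic points: $K_a$ is the quantity in \eqref{eq:Ka}, not the bound \eqref{eq:profile-bound}; and in the gradient estimate the lower factor lies in $L^\infty$ because its order is at most $\lfloor (s+1)/2\rfloor\le s-2$, not merely because each order is at most $s$.)
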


\begin{proof}
Set $G(\phi^a,\psi)=e^{\phi^a}(e^\psi-1)$.  The multivariable chain rule
gives
\begin{equation}
 \pa^\alpha G=A\pa^\alpha\psi+\mathcal C_\alpha,
 \qquad
 \mathcal C_\alpha=
 \sum c_{\mathbf\beta,\mathbf\gamma}
 e^{\phi^a+\vartheta\psi}
 \prod_{j=1}^J\pa^{\beta_j}\phi^a
 \prod_{k=1}^K\pa^{\gamma_k}\psi,
 \label{eq:C-alpha-expansion}
\end{equation}
where $0\le\vartheta\le1$ is represented by an integral remainder,
$\sum_j\abs{\beta_j}+\sum_k\abs{\gamma_k}=\abs\alpha$, and the sole term
with $K=1$, $\gamma_1=\alpha$, and no differentiated profile factor has been
removed.  Thus each remaining product either contains a derivative of
$\phi^a$ or at least two factors involving $\psi$, and no single
$\psi$-factor carries more than $\abs\alpha-1$ derivatives.

Place the factor with the largest number of derivatives in $L^2$ and all
others in $L^\infty$.  Since $s\ge5$ in two dimensions,
$H^{s-2}\hookrightarrow L^\infty$, and the two largest derivative counts in
a product cannot both exceed $s-2$.  This gives \eqref{eq:C-alpha}.  Applying
one additional spatial derivative to \eqref{eq:C-alpha-expansion} either
puts at most $\abs\alpha$ derivatives on one $\psi$-factor or differentiates
a profile coefficient; this proves \eqref{eq:grad-C-alpha}.

Finally, differentiate \eqref{eq:C-alpha-expansion} in time.  If $\pa_t$
falls on a $\psi$-factor, that factor originally carries at most
$\abs\alpha-1\le s-1$ spatial derivatives; if it falls on the exponential,
the new factor is $\pa_t\phi^a+\pa_t\psi$.  The same $L^2$--$L^\infty$
allocation and the algebra property of $H^{s-1}$ give
\eqref{eq:time-C-alpha}.  Estimate \eqref{eq:poisson-time} supplies the
required time norm and completes the proof.
\end{proof}

\subsection{Perturbation equations and the basic energy}\label{sec:energy}

Subtracting the profile equations gives
\begin{align}
  \pa_tr+u\cdot\nabla r+n\Div v
  &=-v\cdot\nabla n^a-r\Div u^a-\cR_n^a,
  \label{eq:pert-mass}\\
  \pa_tv+u\cdot\nabla v+h_i'(n)\nabla r+\nabla\psi
  &=-v\cdot\nabla u^a
  -[h_i'(n)-h_i'(n^a)]\nabla n^a-\cR_u^a.
  \label{eq:pert-momentum}
\end{align}
This form retains the full transport velocity and the full acoustic
coefficient on the left.  In particular, no term of the form
$r\nabla r$ is treated as an $H^s$ source.  The only nonlinear pressure term
on the right contains the fixed profile gradient and obeys
\begin{equation}
 \norm{[h_i'(n)-h_i'(n^a)]\nabla n^a}_{H^s}
 \le CK_a(t)\norm r_{H^s}. \label{eq:profile-pressure-source}
\end{equation}
We shall also use the exactly equivalent conservative identity
\begin{equation}
 \pa_tr+\Div(nv+r u^a)=-\cR_n^a. 
 \label{eq:pert-mass-conservative}
\end{equation}
It follows directly by subtracting
$\pa_tn^a+\Div(n^au^a)=\cR_n^a$ from the full continuity equation.

At zeroth order the natural electrostatic term is the Legendre counterpart
$\cH_e^*$ defined in \eqref{eq:dual-electron-setting}, rather than
$e^{\phi^a}(e^\psi-1-\psi)$.
It is nonnegative and equivalent to $\psi^2$ for small $\psi$.  Indeed,
\[
  \frac{\dd}{\dd\psi}\bigl[e^\psi(\psi-1)+1\bigr]=e^\psi\psi.
\]

\begin{lemma}[Electric cancellation at zeroth order]\label{lem:zero-electric}
Assume the bootstrap bounds of \Cref{sec:high-order}.  The electric work in
the fluid perturbation energy satisfies
\begin{equation}
\begin{split}
  -\int_\Om n v\cdot\nabla\psi\,\dd x
  =-&\frac{\dd}{\dd t}\int_\Om
  \left\{\frac{\eps^2}{2}\abs{\nabla\psi}^2
  +\cH_e^*(\psi;\phi^a)\right\}\,\dd x
  +\mathfrak e_0, \label{eq:zero-electric}
\end{split}
\end{equation}
where
\begin{equation}
  \abs{\mathfrak e_0}
  \le C\bigl(K_a(t)+\norm W_{H^s}\bigr)\cE_{s,\eps}
  +C\bigl(\mathfrak R_s^a\bigr)^2. \label{eq:e0}
\end{equation}
Here $K_a$ is a bounded combination of profile derivatives.
\end{lemma}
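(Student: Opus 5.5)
We integrate by parts and use the conservative mass identity \eqref{eq:pert-mass-conservative} to convert the electric work into a time derivative of the potential energy. We split $nv=(nv+ru^a)-ru^a$ and write
\[
 -\int_\Om nv\cdot\nabla\psi\,\dd x
 =\int_\Om \Div(nv+ru^a)\,\psi\,\dd x+\int_\Om ru^a\cdot\nabla\psi\,\dd x
 =-\int_\Om(\pa_tr+\cR_n^a)\psi\,\dd x+\int_\Om ru^a\cdot\nabla\psi\,\dd x .
\]
The boundary terms vanish because $v,\psi\in H^s$ and $u^a$ is bounded.

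Into $\pa_tr$ we substitute the time derivative of the nonlinear Poisson relation \eqref{eq:nonlinear-poisson}:
\[
\pa_tr=-\eps^2\Delta\pa_t\psi+e^{\phi^a+\psi}\pa_t\psi+(\pa_t\phi^a)e^{\phi^a}(e^\psi-1)-\pa_t\cR_\phi^a .
\]
Pairing with $\psi$ gives $\frac{\dd}{\dd t}\frac{\eps^2}{2}\norm{\nabla\psi}_2^2$. For the second term, the identity $\frac{\dd}{\dd\psi}[e^\psi(\psi-1)+1]=e^\psi\psi$ shows
\[
\pa_t\cH_e^*=e^{\phi^a+\psi}\psi\,\pa_t\psi+(\pa_t\phi^a)\cH_e^*,
\]
so $\int e^{\phi^a+\psi}\pa_t\psi\,\psi=\frac{\dd}{\dd t}\int\cH_e^*-\int(\pa_t\phi^a)\cH_e^*$. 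This produces exactly the time derivative in \eqref{eq:zero-electric}, and the sign works out as stated.

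Everything left over is collected into $\mathfrak e_0$:
\[
\mathfrak e_0=\int(\pa_t\phi^a)\cH_e^*-\int(\pa_t\phi^a)e^{\phi^a}(e^\psi-1)\psi+\int(\pa_t\cR_\phi^a-\cR_n^a)\psi+\int ru^a\cdot\nabla\psi .
\]
The first two terms are bounded by $CK_a\norm\psi_2^2$, since both $\cH_e^*$ and $(e^\psi-1)\psi$ are $O(\psi^2)$ in the bootstrap range $\norm\psi_\infty\le1$. The residual term is bounded by $\mathfrak R_s^a\norm\psi_2\le C(\mathfrak R_s^a)^2+C\cE_{s,\eps}$ via Young. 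We also use that $\cE_{s,\eps}$ controls $\norm r_{H^s}^2,\norm v_{H^s}^2$ and, through $\cH_e^*\simeq\psi^2$ together with the differentiated forms with $A\ge a_*$, controls $\norm\psi_{H^s}^2$.

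\textbf{The main obstacle} is the transport term $\int ru^a\cdot\nabla\psi$. Estimating it naively by $\norm r_2\norm{\nabla\psi}_2$ is fine, since $\nabla\psi$ is controlled in $L^2$ by $\norm\psi_{H^1}\lesssim\cE_{s,\eps}^{1/2}$ with no $\eps^{-1}$ at this low order. However, $u^a$ tends to the constants $u_\pm$, not to zero, so the coefficient is $\norm{u^a}_\infty$ rather than a profile derivative. This is harmless because $\norm{u^a}_\infty$ is bounded by \eqref{eq:profile-bound} and can be absorbed into $C$, provided $K_a$ is understood to include the bounded constant part. If a sharper $K_a$ is wanted, we would instead substitute $r$ from \eqref{eq:commuted-poisson} at $\alpha=0$. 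This rewrites $\int ru^a\cdot\nabla\psi$ as
\[
\int\bigl(-\eps^2\Delta\psi+e^{\phi^a}(e^\psi-1)-\cR_\phi^a\bigr)u^a\cdot\nabla\psi .
\]
Integrating by parts then shows that each piece is a total divergence modulo terms carrying $\nabla u^a$ or $\nabla\phi^a$: for example, $e^{\phi^a}(e^\psi-1)\nabla\psi=\nabla(e^{\phi^a}(e^\psi-1-\psi))-(\nabla\phi^a)e^{\phi^a}(e^\psi-1-\psi)$, and similarly for $\eps^2\Delta\psi\,u^a\cdot\nabla\psi$. This yields bounds by $CK_a\cE_{s,\eps}+C(\mathfrak R_s^a)^2$, with the $\norm W_{H^s}\cE_{s,\eps}$ contribution coming from the cubic remainders.
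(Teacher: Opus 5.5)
Your proof is correct, and its skeleton is the same as the paper's. Both arguments integrate by parts, use the conservative identity \eqref{eq:pert-mass-conservative}, pair the time-differentiated Poisson equation with $\psi$, and invoke the Legendre identity for $\cH_e^*$. Your formula for $\mathfrak e_0$ and your bounds on its profile-coefficient and residual pieces match the paper's.

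The one genuine difference is the transport term $\int_\Om r u^a\cdot\nabla\psi\,\dd x$. You bound it directly by $\norm{u^a}_\infty\norm r_2\norm{\nabla\psi}_2$, and this is legitimate:
\begin{itemize}
\item the $\abs\alpha=1$ screened forms $\frac12\int A\abs{\pa^\alpha\psi}^2$ in \eqref{eq:energy-definition}, with $A\ge a_0>0$, control $\norm{\nabla\psi}_2^2$ uniformly in $\eps$;
\item your proviso on $K_a$ holds automatically, because \eqref{eq:Ka} contains the summand $1$.
\end{itemize}

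The paper instead takes what you list as the sharper alternative. It substitutes the undifferentiated equation \eqref{eq:nonlinear-poisson} for $r$ and integrates by parts into \eqref{eq:zero-transport-gradient}--\eqref{eq:zero-transport-residual}. Your citation of \eqref{eq:commuted-poisson} ``at $\alpha=0$'' should really point to \eqref{eq:nonlinear-poisson}, since that lemma is stated only for $\abs\alpha\ge1$.

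The paper's route buys two things. First, the zeroth-order identity closes with only $\frac{\eps^2}{2}\abs{\nabla\psi}^2+\cH_e^*$ and $\norm r_2$, never an unweighted $\norm{\nabla\psi}_2$. Second, every coefficient carries a space or time derivative of the profile, or a residual, rather than the non-decaying $\norm{u^a}_\infty$. This is the same mechanism that is indispensable at $\abs\alpha=s$ in \Cref{lem:electric-compensation}, and it is what an $L^2$-level argument would need.

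For the lemma as stated, with the full $\cE_{s,\eps}$ on the right of \eqref{eq:e0} on a finite interval, your shorter route suffices. Neither route actually needs the $\norm W_{H^s}$ factor.
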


\begin{proof}
Integration by parts gives
\[
  -\int nv\cdot\nabla\psi=\int\Div(nv)\psi.
\]
Use \eqref{eq:pert-mass-conservative} to write
\begin{equation}
 \Div(nv)=-\pa_tr-u^a\cdot\nabla r-r\Div u^a-\cR_n^a.
 \label{eq:div-nv}
\end{equation}
The two profile-transport terms combine without estimating
$\nabla\psi$:
\begin{equation}
 \int(-u^a\cdot\nabla r-r\Div u^a)\psi
 =\int r u^a\cdot\nabla\psi. 
 \label{eq:zero-transport-first}
\end{equation}
Substitute the undifferentiated Poisson equation
\[
 r=-\eps^2\Delta\psi+e^{\phi^a}(e^\psi-1)-\cR_\phi^a.
\]
For the gradient part, integration by parts gives
\begin{align}
 -\eps^2\int (u^a\cdot\nabla\psi)\Delta\psi
 ={}&\eps^2\int \pa_k u_j^a\,\pa_j\psi\,\pa_k\psi
 -\frac{\eps^2}{2}\int(\Div u^a)\abs{\nabla\psi}^2. 
 \label{eq:zero-transport-gradient}
\end{align}
With $F(\psi)=e^\psi-1-\psi$, the screened part is
\begin{align}
 \int e^{\phi^a}(e^\psi-1)u^a\cdot\nabla\psi
 =-\int\Div(e^{\phi^a}u^a)F(\psi), 
 \label{eq:zero-transport-screened}
\end{align}
and the residual part is
\begin{align}
 -\int \cR_\phi^a u^a\cdot\nabla\psi
 =\int\Div(\cR_\phi^a u^a)\psi. 
 \label{eq:zero-transport-residual}
\end{align}
Equations \eqref{eq:zero-transport-gradient}--
\eqref{eq:zero-transport-residual} are bounded by
$CK_a\cE_{s,\eps}+C(\mathfrak R_s^a)^2$ because
$\abs{F(\psi)}\le C\psi^2$ on the bootstrap range.  This is the step that
avoids an unweighted $L^2$ norm of $\nabla\psi$.

It remains to treat $-\int\pa_tr\,\psi$.  Differentiate
\eqref{eq:nonlinear-poisson} in time and multiply by $\psi$:
\begin{align*}
  \int \pa_tr\,\psi
  &=\eps^2\int\nabla\pa_t\psi\cdot\nabla\psi
  +\int e^{\phi^a+\psi}\pa_t\psi\,\psi
  +\int (\pa_te^{\phi^a})(e^\psi-1)\psi
  -\int\pa_t\cR_\phi^a\psi.
\end{align*}
If $h(\psi)=e^\psi(\psi-1)+1$, then $h'(\psi)=e^\psi\psi$ and
\begin{align}
 \int e^{\phi^a+\psi}\pa_t\psi\,\psi
 =\frac{\dd}{\dd t}\int e^{\phi^a}h(\psi)
 -\int(\pa_t\phi^a)e^{\phi^a}h(\psi). 
 \label{eq:zero-entropy-time}
\end{align}
The first gradient term and \eqref{eq:zero-entropy-time} give the derivative
in \eqref{eq:zero-electric}.  The difference between the two remaining
profile-coefficient terms is quadratic in $\psi$, while the residual terms
are bounded by Young's inequality using
\eqref{eq:poisson-estimate} and \eqref{eq:residual-norm}.  Together with
\eqref{eq:zero-transport-gradient}--\eqref{eq:zero-transport-residual}, this proves
\eqref{eq:e0} and the lemma.
\end{proof}

\subsection{High-order compensated estimate}\label{sec:high-order}

Set
\begin{equation}
  \mathfrak N_s(t)=\norm r_{H^s}+\norm v_{H^s}
  +\norm\psi_{H^s}+\eps\norm{\nabla\psi}_{H^s}. \label{eq:Ns}
\end{equation}
We also write
\begin{equation}
  K_a(t)=1+\norm{\pa_t(n^a,u^a,\phi^a)}_{W^{s,\infty}}
  +\norm{\nabla(n^a,u^a,\phi^a)}_{W^{s,\infty}}. \label{eq:Ka}
\end{equation}
On an interval $[t_0,\tau]$ assume
\begin{equation}
  \sup_{t_0\le t\le\tau}\mathfrak N_s(t)\le2\eta,
  \qquad n^a+r\in[n_*/4,2n^*]. \label{eq:bootstrap}
\end{equation}

\begin{lemma}[Energy equivalence]\label{lem:energy-equivalence}
For $\eta$ and $\eps_0$ sufficiently small,
\begin{equation}
  c_0\mathfrak N_s(t)^2
  \le\cE_{s,\eps}(t)
  \le C_0\mathfrak N_s(t)^2, \label{eq:energy-equivalence}
\end{equation}
with $c_0,C_0$ independent of $\eps$.
\end{lemma}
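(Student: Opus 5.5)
The plan is to compare $\cE_{s,\eps}$ term by term with the squares of the four norms in $\mathfrak N_s$, using only pointwise bounds on the coefficients. These bounds come from the bootstrap \eqref{eq:bootstrap} and the profile bounds of \Cref{prop:composite}.

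\emph{Fluid part.} The density $n=n^a+r$ lies in $[n_*/4,2n^*]\subset I^\sharp$. Hence $n$ and $h_i'(n)=p_i'(n)/n$ are bounded above and below by positive constants depending only on $n_*,n^*,p_*$ and $\sup_{I^\sharp}p_i'$; here we use \eqref{eq:warm-ion}. So the first line of \eqref{eq:energy-definition} is comparable to $\norm r_{H^s}^2+\norm v_{H^s}^2$ with $\eps$-independent constants.

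\emph{Electric part.} We need $\psi$ bounded in $L^\infty$ uniformly. Since $s\ge5$ in two dimensions, $\norm\psi_{L^\infty}\le C\norm\psi_{H^s}\le 2C\eta$, and this is at most $1$ for $\eta$ small. The profile bound \eqref{eq:profile-bound} gives $\abs{\phi^a}\le C$ uniformly in $\eps\le\eps_0$; alternatively, $\phi^a=\log\Nr+O(\eps^2)$ with $\Nr\in[n_*,n^*]$. Therefore $A=e^{\phi^a+\psi}\in[a_*,a^*]$ with $a_*,a^*$ independent of $\eps$.

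\emph{Zeroth-order entropy.} Write $h(\psi)=e^\psi(\psi-1)+1$. Then $h(0)=h'(0)=0$ and $h''(\psi)=e^\psi(1+\psi)$. For $\abs\psi\le1$ this gives $h''\in[0,2e]$, which is not good enough for the lower bound. Restricting to $\abs\psi\le1/2$ gives $h''\in[e^{-1/2}/2,\,3e^{1/2}/2]$, and Taylor's formula then yields $c\psi^2\le h(\psi)\le C\psi^2$ pointwise. Multiplying by $e^{\phi^a}\in[a_*,a^*]$ gives $\int\cH_e^*\simeq\norm\psi_2^2$.

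\emph{Assembly.} The gradient terms $\frac{\eps^2}{2}\abs{\nabla\psi}^2$ and $\eps^2\abs{\nabla\pa^\alpha\psi}^2$ for $1\le\abs\alpha\le s$ together reproduce exactly $\frac12\eps^2\norm{\nabla\psi}_{H^s}^2$. The terms $A\abs{\pa^\alpha\psi}^2$ for $1\le\abs\alpha\le s$, together with the zeroth-order term $\cH_e^*$, are comparable to $\norm\psi_{H^s}^2$. Summing all pieces gives \eqref{eq:energy-equivalence}, using $(a+b+c+d)^2\simeq a^2+b^2+c^2+d^2$. The smallness of $\eta$ enters only through the $L^\infty$ bound on $\psi$. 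The smallness of $\eps_0$ enters only through the density range of $n^a$ and the uniform bound on $\phi^a$.

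The only point needing care is the lower bound for $\cH_e^*$, since its convexity degenerates for $\psi$ near $-1$. It is handled by shrinking $\eta$ so that $\abs\psi\le1/2$ via Sobolev embedding. Everything else is pointwise coefficient bounds, and no step involves $\eps^{-1}$.
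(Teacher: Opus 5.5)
Your proposal is correct and follows the paper's proof essentially step for step: bootstrap-range bounds on $n$ and $h_i'(n)$, Sobolev embedding plus the profile bound to pin $A=e^{\phi^a+\psi}$ between $\eps$-independent constants, and the Taylor remainder for $h(z)=e^z(z-1)+1$ on $\abs{\psi}\le 1/2$, which matches the paper's choice $2C_S\eta\le 1/2$. Your worry about $\abs{\psi}\le 1$ is unnecessary, since $h(z)/z^2=\int_0^1(1-\theta)h''(\theta z)\,\dd\theta$ is continuous and positive on any bounded interval (indeed $h>0$ away from $0$), but the extra restriction is harmless.
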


\begin{proof}
The profile construction gives
$3n_*/4\le n^a\le5n^*/4$ after reducing $\eps_0$.  Under
\eqref{eq:bootstrap}, $n=n^a+r$ lies in the fixed compact interval
$I^\sharp$.  Hence
\begin{equation}
 \frac{n_*}{4}\le n\le2n^*,\qquad
 \frac{p_*}{2n^*}\le h_i'(n)\le C_{I^\sharp},
 \label{eq:fluid-weight-bounds}
\end{equation}
where the upper bound uses the prescribed $C^1$ norm of $p_i$.
Sobolev embedding and $\mathfrak N_s\le2\eta$ give
$\norm\psi_\infty\le2C_S\eta$.  The profile potential remains in a
compact interval because $\phi^a=\log\Nr+O(\eps^2)$ uniformly.  Therefore
\begin{equation}
 0<a_0\le e^{\phi^a+\psi}\le a_1
 \label{eq:screening-weight-bounds}
\end{equation}
with constants independent of $\eps$.

For the undifferentiated electron energy set
$h(z)=e^z(z-1)+1$.  Since $h(0)=h'(0)=0$ and
$h''(z)=e^z(1+z)$, Taylor's formula with integral remainder yields
\begin{equation}
 h(z)=z^2\int_0^1(1-\theta)e^{\theta z}(1+\theta z)\,\dd\theta.
 \label{eq:electron-entropy-Taylor}
\end{equation}
Choose $\eta$ so that $2C_S\eta\le1/2$.  Then the integral in
\eqref{eq:electron-entropy-Taylor} is bounded above and below by positive
absolute constants.  Combining it with the compact bounds for $e^{\phi^a}$
gives
\begin{equation}
 c_e\psi^2\le\cH_e^*(\psi;\phi^a)\le C_e\psi^2.
 \label{eq:electron-entropy-equivalence}
\end{equation}
Summing \eqref{eq:fluid-weight-bounds},
\eqref{eq:screening-weight-bounds}, and
\eqref{eq:electron-entropy-equivalence} over $\abs\alpha\le s$ proves both
inequalities in \eqref{eq:energy-equivalence}.  Notice that every derivative
of order $s+1$ on $\psi$ is multiplied by $\eps$; the equivalence constants
therefore contain no hidden factor $\eps^{-1}$.
\end{proof}

The derivative count behind the compensated estimate is summarized below.
The entry ``available control'' records the norm used before Young's
inequality.
\begin{center}
\begin{tabular}{@{}lll@{}}
\toprule
Term & Reduction & Available control\\
\midrule
$\pa^\alpha\nabla\psi$, $\abs\alpha=s$
 & integrate the fluid work by parts
 & $\pa^\alpha\psi$ in $L^2$\\
$\pa_t\pa^\alpha r$
 & time-differentiated Poisson equation
 & $\pa^\alpha\psi$, $\eps\nabla\pa^\alpha\psi$\\
$\pa_t\mathcal C_\alpha$
 & $\abs\beta\le s-1$ on $\pa_t\pa^\beta\psi$
 & \eqref{eq:poisson-time}\\
profile and transport commutators
 & Moser estimates
 & $K_a\mathfrak N_s$\\
\bottomrule
\end{tabular}
\end{center}

\begin{lemma}[Time derivatives below top order]\label{lem:time-derivatives}
Under \eqref{eq:bootstrap},
\begin{equation}
 \norm{\pa_tr}_{H^{s-1}}+\norm{\pa_tv}_{H^{s-1}}
 +\norm{\pa_t\psi}_{H^{s-1}}
 +\eps\norm{\nabla\pa_t\psi}_{H^{s-1}}
 \le C(K_a+\mathfrak N_s)\mathfrak N_s
 +C\mathfrak R_s^a. \label{eq:time-derivative-estimate}
\end{equation}
The constant is independent of $\eps$.
\end{lemma}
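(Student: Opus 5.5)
The estimate follows by reading $\pa_tr$ and $\pa_tv$ directly off the perturbation equations \eqref{eq:pert-mass}--\eqref{eq:pert-momentum} in $H^{s-1}$. The potential time derivative is then recovered from \eqref{eq:poisson-time} of \Cref{prop:poisson-resolver}. The only delicate point is the electric force $\nabla\psi$ in the momentum equation. It must be placed in $H^{s-1}$ without passing through $\eps\nabla\psi$, and $\norm{\nabla\psi}_{H^{s-1}}\le\norm\psi_{H^s}\le\mathfrak N_s$ does exactly this. This is the reason the bound is stated only below top order.

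\emph{Density.} From \eqref{eq:pert-mass},
\[
\pa_tr=-u\cdot\nabla r-n\Div v-v\cdot\nabla n^a-r\Div u^a-\cR_n^a .
\]
We write $u=u^a+v$ and $n=n^a+r$. Since $s-1\ge4>1$, $H^{s-1}(\Om)$ is an algebra, so
\[
\norm{v\cdot\nabla r}_{H^{s-1}}+\norm{r\Div v}_{H^{s-1}}\le C\mathfrak N_s^2 .
\]
The profile-coefficient terms $u^a\cdot\nabla r$, $n^a\Div v$, $v\cdot\nabla n^a$ and $r\Div u^a$ are products of an $H^{s-1}$ function (or of $\nabla r$, $\nabla v\in H^{s-1}$) with a coefficient in $W^{s-1,\infty}$. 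The coefficients $u^a$ and $n^a$ are not in $L^2$ because of the distinct end states, but they are bounded in $W^{s,\infty}$ by \eqref{eq:profile-bound}. Hence these terms are bounded by $CK_a\mathfrak N_s$, where we use the undifferentiated sup bound of $n^a,u^a$ from \eqref{eq:profile-bound}, absorbed into the constant. The residual contributes $\norm{\cR_n^a}_{H^{s-1}}\le\mathfrak R_s^a$.

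\emph{Velocity.} From \eqref{eq:pert-momentum}, the terms $u\cdot\nabla v$ and $v\cdot\nabla u^a$ are treated exactly as in the density step. For the pressure term, $h_i'(n)\nabla r$ is bounded using the Moser composition estimate for $h_i'(n^a+r)$. Here $h_i'\in C^{s+3M+5}$ on $I^\sharp$, which suffices, and the bootstrap range \eqref{eq:bootstrap} keeps $n$ in $I^\sharp$; this gives $C(1+\mathfrak N_s)\mathfrak N_s$. The source \eqref{eq:profile-pressure-source} is bounded by $CK_a\mathfrak N_s$. The electric term satisfies $\norm{\nabla\psi}_{H^{s-1}}\le\mathfrak N_s$, and the residual is bounded by $\mathfrak R_s^a$.

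\emph{Potential.} With the fluid bounds in hand, we apply \eqref{eq:poisson-time}, which is legitimate because $\norm r_{H^s}\le2\eta\le\eta_P$. This gives
\[
\norm{\pa_t\psi}_{H^{s-1}}+\eps\norm{\nabla\pa_t\psi}_{H^{s-1}}\le C\bigl(\norm{\pa_tr}_{H^{s-1}}+\mathfrak N_s+\norm{\pa_t\cR_\phi^a}_{H^{s-1}}\bigr).
\]
We insert the density bound and use $\norm{\pa_t\cR_\phi^a}_{H^{s-1}}\le\mathfrak R_s^a$. The stray term $C\norm r_{H^s}$ in \eqref{eq:poisson-time} comes from $(\pa_t\phi^a)e^{\phi^a}(e^\psi-1)$ and from the estimate \eqref{eq:poisson-estimate}. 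It is bounded by $C\mathfrak N_s\le CK_a\mathfrak N_s$ since $K_a\ge1$. Summing the three bounds gives \eqref{eq:time-derivative-estimate}. No constant depends on $\eps$: the resolver constants are $\eps$-independent, and no step uses $\eps\nabla\psi$ beyond the graph norm already contained in $\mathfrak N_s$.

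The step requiring most care is the potential estimate. One must check that the absorption in the proof of \eqref{eq:poisson-time} is valid on the bootstrap range. This requires the smallness $\norm\psi_{H^s}\le C\eta$, which follows from \eqref{eq:poisson-estimate}. Apart from this check, the argument is routine tame product bookkeeping.
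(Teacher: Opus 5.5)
Your proposal is correct and follows the paper's proof essentially step by step. You read $\pa_tr$ and $\pa_tv$ off \eqref{eq:pert-mass}--\eqref{eq:pert-momentum} in $H^{s-1}$, with undifferentiated profile coefficients in $L^\infty$ and only their derivatives in the $K_a$ norms. You control the electric force by $\norm{\nabla\psi}_{H^{s-1}}\le\norm\psi_{H^s}\le\mathfrak N_s$, and you insert the density bound into \eqref{eq:poisson-time} to recover $\norm{\pa_t\psi}_{H^{s-1}}+\eps\norm{\nabla\pa_t\psi}_{H^{s-1}}$ with $\eps$-independent constants.
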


\begin{proof}
Because $s-1\ge4$, $H^{s-1}(\Om)$ is an algebra and
$H^{s-1}\hookrightarrow W^{1,\infty}$.  From
\eqref{eq:pert-mass},
\begin{align}
 \norm{\pa_tr}_{H^{s-1}}
 \le{}&C(1+K_a+\norm v_{H^s})\norm r_{H^s}
 +C(1+K_a+\norm r_{H^s})\norm v_{H^s}\notag\\
 &\quad
 +CK_a(\norm r_{H^s}+\norm v_{H^s})
 +\norm{\cR_n^a}_{H^{s-1}}. 
 \label{eq:rt-expanded}
\end{align}
Here and below an undifferentiated profile coefficient is estimated in
$L^\infty$ and only its derivatives are placed in $H^s$; thus the distinct
far fields cause no infinite norm.

For the momentum equation, the composition estimate on the compact state
interval gives
\begin{align}
 \norm{h_i'(n)\nabla r}_{H^{s-1}}
 &\le C(1+K_a+\norm r_{H^s})\norm r_{H^s},\\
 \norm{[h_i'(n)-h_i'(n^a)]\nabla n^a}_{H^{s-1}}
 &\le CK_a\norm r_{H^{s-1}}.
 \label{eq:pressure-time-bound}
\end{align}
The electric term is harmless at this level because
$\norm{\nabla\psi}_{H^{s-1}}\le\norm\psi_{H^s}$; the unavailable quantity
would be $\norm{\nabla\psi}_{H^s}$, which is not used.  Consequently,
\begin{align}
 \norm{\pa_tv}_{H^{s-1}}
 \le C(K_a+\mathfrak N_s)\mathfrak N_s
 +C\norm{\cR_u^a}_{H^{s-1}}. 
 \label{eq:vt-expanded}
\end{align}
Equations \eqref{eq:rt-expanded}--\eqref{eq:vt-expanded} give the first two
terms in \eqref{eq:time-derivative-estimate}.  Insert
\eqref{eq:rt-expanded} into \eqref{eq:poisson-time}; after using
\eqref{eq:poisson-estimate} and the residual definition, the two potential
terms follow.  All constants depend only on the bootstrap state interval and
profile seminorms, not on $\eps$.
\end{proof}

The central calculation is the following lemma.

\begin{lemma}[Top-order electric compensation]\label{lem:electric-compensation}
For every spatial multi-index $1\le\abs\alpha\le s$, the leading electric work
in the $\pa^\alpha$ momentum energy satisfies
\begin{equation}
\begin{split}
  -\int_\Om n\pa^\alpha v\cdot\nabla\pa^\alpha\psi\,\dd x
  =-&\frac12\frac{\dd}{\dd t}\int_\Om
  \left(A\abs{\pa^\alpha\psi}^2
  +\eps^2\abs{\nabla\pa^\alpha\psi}^2\right)\,\dd x
  +\mathfrak e_\alpha, \label{eq:top-electric}
\end{split}
\end{equation}
where
\begin{equation}
  \sum_{1\le\abs\alpha\le s}\abs{\mathfrak e_\alpha}
  \le C\bigl(K_a(t)+\mathfrak N_s(t)\bigr)\cE_{s,\eps}(t)
  +C\bigl(\mathfrak R_s^a(t)\bigr)^2. \label{eq:ealpha}
\end{equation}
No term on the right contains $\eps^{-1}$ or an unweighted
$H^s$ norm of $\nabla\psi$.
\end{lemma}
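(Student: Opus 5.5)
Mirroring the zeroth-order computation of \Cref{lem:zero-electric} at the differentiated level, I would start from integration by parts:
\[
 -\int n\pa^\alpha v\cdot\nabla\pa^\alpha\psi
 =\int \Div(n\pa^\alpha v)\,\pa^\alpha\psi .
\]
To express $\Div(n\pa^\alpha v)$, apply $\pa^\alpha$ to the conservative identity \eqref{eq:pert-mass-conservative}:
\[
 \pa_t\pa^\alpha r+\Div(n\pa^\alpha v)+\Div(u^a\pa^\alpha r)
 =-\pa^\alpha\cR_n^a-\Div\bigl([\pa^\alpha,n]v+[\pa^\alpha,u^a]r\bigr).
\]
The commutator $[\pa^\alpha,n]v$ contains $\pa^\alpha r\,v$, which has $s$ derivatives on $r$. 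After taking the divergence it would carry $s+1$ derivatives. I therefore would not estimate $\Div$ of the commutators directly. Instead I would move the divergence onto $\pa^\alpha\psi$ by parts, giving terms like $\int([\pa^\alpha,n]v)\cdot\nabla\pa^\alpha\psi$.

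This is the first delicate point, since $\nabla\pa^\alpha\psi$ is controlled only with a factor $\eps$. For the pieces with top derivatives I plan to handle them as the paper does for the profile transport. The term $\int \pa^\alpha r\,v\cdot\nabla\pa^\alpha\psi$ is treated by substituting \eqref{eq:commuted-poisson} for $\pa^\alpha r$. That substitution produces three pieces:
\begin{itemize}
\item a screened part $\int A\pa^\alpha\psi\,v\cdot\nabla\pa^\alpha\psi=-\frac12\int\Div(Av)|\pa^\alpha\psi|^2$;
\item a gradient part $-\eps^2\int\Delta\pa^\alpha\psi\,v\cdot\nabla\pa^\alpha\psi$, which by the identity \eqref{eq:zero-transport-gradient} becomes $\eps^2\int\nabla v\,|\nabla\pa^\alpha\psi|^2$;
\item a part involving $\mathcal C_\alpha$ and the residual, which I move back by parts using \eqref{eq:grad-C-alpha} and $\cR_\phi^a\in H^{s+1}$.
\end{itemize}
The lower commutator terms, with at most $s$ derivatives after moving the divergence, are bounded by $(K_a+\mathfrak N_s)\mathfrak N_s\norm{\pa^\alpha\psi}_2$. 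Here I keep one derivative on the profile or use $\norm{v}_{W^{1,\infty}}$.

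The transport term $\int\Div(u^a\pa^\alpha r)\pa^\alpha\psi=-\int \pa^\alpha r\,u^a\cdot\nabla\pa^\alpha\psi$ is treated exactly like the $v$-term above, by substituting \eqref{eq:commuted-poisson}. This yields only terms bounded by $K_a\cE_{s,\eps}+(\mathfrak R^a_s)^2$.

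The main term is $-\int\pa_t\pa^\alpha r\,\pa^\alpha\psi$. I differentiate \eqref{eq:commuted-poisson} in time:
\[
 \pa_t\pa^\alpha r=-\eps^2\Delta\pa_t\pa^\alpha\psi+A\pa_t\pa^\alpha\psi+(\pa_tA)\pa^\alpha\psi+\pa_t\mathcal C_\alpha-\pa^\alpha\pa_t\cR^a_\phi .
\]
Pairing this with $\pa^\alpha\psi$, the first two terms produce $\frac12\frac{\dd}{\dd t}\int(A|\pa^\alpha\psi|^2+\eps^2|\nabla\pa^\alpha\psi|^2)$, minus $\frac12\int\pa_tA|\pa^\alpha\psi|^2$. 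The terms involving $\pa_tA=A(\pa_t\phi^a+\pa_t\psi)$ are bounded using $\norm{\pa_t\psi}_{L^\infty}\lesssim\norm{\pa_t\psi}_{H^{s-1}}$ together with \Cref{lem:time-derivatives}. The $\pa_t\mathcal C_\alpha$ term is bounded via \eqref{eq:time-C-alpha} and \Cref{lem:time-derivatives}. The residual term is bounded by $\norm{\pa_t\cR_\phi^a}_{H^s}$ and Young's inequality. This gives exactly the sign in \eqref{eq:top-electric}.

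The remaining residual term $\int\pa^\alpha\cR_n^a\,\pa^\alpha\psi$ is absorbed by Young's inequality.

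The main obstacle is the commutator $[\pa^\alpha,n]v$ together with $\pa^\alpha r\,v$ paired against $\nabla\pa^\alpha\psi$. The plan handles these only by substituting the commuted Poisson relation a second time, and this substitution must not create terms of order $s+1$ in $r$. If the commutator structure proves too rigid, I would first try writing $n\pa^\alpha v=\pa^\alpha(nv)-[\pa^\alpha,n]v$ and splitting off the single top term $\pa^\alpha n\,v$. Only that top term would receive the Poisson substitution, while all others keep at most $s-1$ derivatives on $r$ and are bounded crudely.
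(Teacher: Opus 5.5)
Your proposal is correct and follows the paper's proof. You integrate the electric work by parts, keep the top transport term by moving its derivative onto $\pa^\alpha\psi$, substitute the commuted Poisson relation \eqref{eq:commuted-poisson} into it, and obtain the energy derivative from the time-differentiated commuted relation. The only difference is cosmetic: you commute the conservative identity \eqref{eq:pert-mass-conservative}, so the top transport appears as $\Div(u\,\pa^\alpha r)$ (split into its $u^a$ and $v$ parts) rather than the paper's $u\cdot\nabla\pa^\alpha r$ with the cancellations recorded in $\mathcal M_\alpha$. One point of care, consistent with your closing remark: only the top piece should be paired with $\nabla\pa^\alpha\psi$, while the lower commutator pieces keep their divergence (at most $s$ derivatives) and are paired with $\pa^\alpha\psi$, since pairing them with $\nabla\pa^\alpha\psi$ would reintroduce the $\eps^{-1}$ loss.
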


\begin{proof}
Put $q_\alpha=\pa^\alpha\psi$ and
$\rho_\alpha=\pa^\alpha r$.  Integrating the electric work by parts gives
\begin{equation}
  -\int n\pa^\alpha v\cdot\nabla\pa^\alpha\psi
  =\int\Div(n\pa^\alpha v)q_\alpha. \label{eq:electric-ibp}
\end{equation}
Commuting the nonconservative identity \eqref{eq:pert-mass} and then adding
$\nabla n\cdot\pa^\alpha v$ gives
\begin{equation}
  \Div(n\pa^\alpha v)
  =-\pa_t\rho_\alpha-u\cdot\nabla\rho_\alpha
  +\mathcal M_\alpha, \label{eq:mass-top}
\end{equation}
where
\begin{equation}
\begin{split}
 \mathcal M_\alpha={}&-[\pa^\alpha,u]\cdot\nabla r
+\nabla n\cdot\pa^\alpha v-[\pa^\alpha,n]\Div v\\
 &- \pa^\alpha(v\cdot\nabla n^a)
-\pa^\alpha(r\Div u^a)-\pa^\alpha\cR_n^a.
\end{split}
\label{eq:Malpha-definition}
\end{equation}
The apparent top products cancel: the term
$-(\pa^\alpha v)\cdot\nabla r$ in the first commutator cancels the
$\nabla r\cdot\pa^\alpha v$ part of the second term, and the term
$-(\pa^\alpha v)\cdot\nabla n^a$ cancels its profile counterpart.  After
these cancellations, the standard commutator estimate gives
\begin{equation}
  \sum_{1\le\abs\alpha\le s}\norm{\mathcal M_\alpha}_2
  \le C(K_a+\mathfrak N_s)(\norm r_{H^s}+\norm v_{H^s})
  +C\norm{\cR_n^a}_{H^s}. \label{eq:Malpha}
\end{equation}

The transport term in \eqref{eq:mass-top} cannot be estimated by
$\norm{\nabla q_\alpha}_2$ without losing $\eps^{-1}$.  Instead, integrate
it once and retain $\rho_\alpha$:
\begin{equation}
 -\int (u\cdot\nabla\rho_\alpha)q_\alpha
 =\int \rho_\alpha u\cdot\nabla q_\alpha
 +\int(\Div u)\rho_\alpha q_\alpha. 
 \label{eq:top-transport-rho}
\end{equation}
Use \eqref{eq:commuted-poisson} in the first term.  Its three principal
pieces satisfy
\begin{align}
 -\eps^2\int (u\cdot\nabla q_\alpha)\Delta q_\alpha
 ={}&\eps^2\int\pa_ku_j\,\pa_jq_\alpha\,\pa_kq_\alpha
 -\frac{\eps^2}{2}\int(\Div u)\abs{\nabla q_\alpha}^2,
 \label{eq:top-transport-gradient}\\
 \int A q_\alpha u\cdot\nabla q_\alpha
 ={}&-\frac12\int\Div(Au)q_\alpha^2,
 \label{eq:top-transport-screening}\\
 \int u(\mathcal C_\alpha-\pa^\alpha\cR_\phi^a)
 \cdot\nabla q_\alpha
 ={}&-\int\Div\!\left[u(\mathcal C_\alpha
 -\pa^\alpha\cR_\phi^a)\right]q_\alpha.
 \label{eq:top-transport-commutator}
\end{align}
The last identity is legitimate because \eqref{eq:grad-C-alpha} controls
$\nabla\mathcal C_\alpha$ and the residual is available in $H^{s+1}$.
The second term in \eqref{eq:top-transport-rho} is bounded directly by
$\norm{\Div u}_\infty\norm{\rho_\alpha}_2\norm{q_\alpha}_2$.
Consequently,
\begin{equation}
 \sum_{1\le\abs\alpha\le s}
 \abs{-\ip{u\cdot\nabla\rho_\alpha}{q_\alpha}}
 \le C(K_a+\mathfrak N_s)\cE_{s,\eps}
 +C(\mathfrak R_s^a)^2. 
 \label{eq:top-transport-bound}
\end{equation}

We now extract the time derivative.  Differentiate
\eqref{eq:commuted-poisson} in time:
\begin{equation}
  \pa_t\rho_\alpha
  =-\eps^2\Delta\pa_tq_\alpha
  +A\pa_tq_\alpha
  +(\pa_tA)q_\alpha
  +\pa_t\mathcal C_\alpha-\pa_t\pa^\alpha\cR_\phi^a. \label{eq:time-commuted-poisson}
\end{equation}
Pairing with $q_\alpha$ yields the exact identity
\begin{align}
 \ip{\pa_t\rho_\alpha}{q_\alpha}
 ={}&\frac12\frac{\dd}{\dd t}\int
 \left(\eps^2\abs{\nabla q_\alpha}^2+Aq_\alpha^2\right)
 +\frac12\int(\pa_tA)q_\alpha^2\notag\\
 &+\ip{\pa_t\mathcal C_\alpha}{q_\alpha}
 -\ip{\pa_t\pa^\alpha\cR_\phi^a}{q_\alpha}.
 \label{eq:time-commuted-pairing}
\end{align}
Since
$\pa_tA=A(\pa_t\phi^a+\pa_t\psi)$,
\Cref{lem:time-derivatives} and Sobolev embedding give
$\norm{\pa_tA}_\infty
\le C(K_a+\mathfrak N_s)+C\mathfrak R_s^a$.
Furthermore, \eqref{eq:time-C-alpha} gives
\begin{equation}
  \sum_{1\le\abs\alpha\le s}
  \abs{\ip{\pa_t\mathcal C_\alpha}{\pa^\alpha\psi}}
  \le C(K_a+\mathfrak N_s)\cE_{s,\eps}
  +C\bigl(\mathfrak R_s^a\bigr)^2.
 \label{eq:time-C-pairing}
\end{equation}
The residual pairing in \eqref{eq:time-commuted-pairing} is treated by
Young's inequality.  Finally,
$\abs{\ip{\mathcal M_\alpha}{q_\alpha}}$ is controlled by
\eqref{eq:Malpha} and the $H^s$ part of the energy.  Insert
\eqref{eq:mass-top}, \eqref{eq:top-transport-bound}, and
\eqref{eq:time-commuted-pairing} into \eqref{eq:electric-ibp}; summing over
$\alpha$ proves \eqref{eq:top-electric}--\eqref{eq:ealpha}.  The only
top spatial derivative of the potential appears in
$\eps\nabla q_\alpha$, exactly with its energy weight.
\end{proof}

\begin{proposition}[Uniform high-order estimate]\label{prop:energy}
Under \eqref{eq:bootstrap},
\begin{equation}
  \frac{\dd}{\dd t}\cE_{s,\eps}(t)
  \le C\bigl(K_a(t)+\mathfrak N_s(t)\bigr)\cE_{s,\eps}(t)
  +C\bigl(\mathfrak R_s^a(t)\bigr)^2, \label{eq:differential-energy}
\end{equation}
where $C$ is independent of $\eps$ and $K_a$ is defined in \eqref{eq:Ka}.
Consequently,
\begin{equation}
  \sup_{t_0\le t\le\tau}\cE_{s,\eps}(t)
  \le C_{t_0,T}\left(
  \cE_{s,\eps}(t_0)+
  \int_{t_0}^{\tau}\bigl(\mathfrak R_s^a(t)\bigr)^2\,\dd t\right). \label{eq:integrated-energy}
\end{equation}
\end{proposition}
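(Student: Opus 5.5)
We will differentiate each of the three blocks of $\cE_{s,\eps}$ in \eqref{eq:energy-definition} separately and show that the electric work produced by the fluid block is cancelled exactly by the time derivatives of the two electric blocks. The cancellations are supplied by \Cref{lem:zero-electric} at order zero and by \Cref{lem:electric-compensation} for $1\le\abs\alpha\le s$. All remaining terms will be bounded by $C(K_a+\mathfrak N_s)\cE_{s,\eps}+C(\mathfrak R_s^a)^2$. Energy equivalence (\Cref{lem:energy-equivalence}) converts $\mathfrak N_s^2$ into $\cE_{s,\eps}$ throughout.

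\textbf{Fluid block.} For $\abs\alpha\le s$ we apply $\pa^\alpha$ to \eqref{eq:pert-mass}--\eqref{eq:pert-momentum}. We pair the momentum equation with $n\pa^\alpha v$ and the mass equation with $h_i'(n)\pa^\alpha r$, then add.

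The time derivative of the weights produces $\tfrac12\int(\pa_tn\abs{\pa^\alpha v}^2+\pa_t h_i'(n)\abs{\pa^\alpha r}^2)$. Here $\pa_tn=\pa_tn^a+\pa_tr$, and \Cref{lem:time-derivatives} with Sobolev embedding gives $\norm{\pa_t n}_\infty\le C(K_a+\mathfrak N_s)+C\mathfrak R_s^a$.

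The transport terms $u\cdot\nabla$ are integrated by parts, which costs $\norm{\Div(nu)}_\infty$ and $\norm{\Div(h_i'(n)u)}_\infty$.

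The acoustic cross terms are $\int n h_i'(n)(\nabla\pa^\alpha r\cdot\pa^\alpha v+\Div\pa^\alpha v\,\pa^\alpha r)$. Since $nh_i'(n)=p_i'(n)$, these combine into $-\int\nabla(p_i'(n))\cdot\pa^\alpha v\,\pa^\alpha r$, which involves only first derivatives of $n$.

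The commutators $[\pa^\alpha,u]\nabla v$, $[\pa^\alpha,h_i'(n)]\nabla r$, $[\pa^\alpha,n]\Div v$ and the right-hand sides are handled with the Moser and commutator estimates of \Cref{lem:appendix-tame}. For the pressure source we use \eqref{eq:profile-pressure-source}. Undifferentiated profile factors are always placed in $L^\infty$, so only $K_a$ enters. The residuals $\cR^a_n,\cR^a_u$ are handled by Young's inequality.

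The one term not estimated this way is the electric work $-\int n\pa^\alpha v\cdot\nabla\pa^\alpha\psi$. There is one more caveat: the momentum commutator contains no $\nabla\psi$ term, because $\nabla\psi$ enters linearly with a constant coefficient.

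\textbf{Electric block.} For $\alpha=0$ we invoke \Cref{lem:zero-electric}. For $1\le\abs\alpha\le s$ we invoke \Cref{lem:electric-compensation}. Moving the exact time derivatives to the left reproduces precisely $\frac{\dd}{\dd t}$ of the second and third lines of \eqref{eq:energy-definition}. The errors $\mathfrak e_0$ and $\mathfrak e_\alpha$ satisfy the required bound by \eqref{eq:e0} and \eqref{eq:ealpha}. Summing everything gives \eqref{eq:differential-energy}. At no step is $\norm{\nabla\psi}_{H^s}$ used without the weight $\eps$.

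\textbf{Gronwall step.} For \eqref{eq:integrated-energy} we apply Gronwall on $[t_0,\tau]$ with the factor $\exp\int_{t_0}^{\tau}C(K_a+\mathfrak N_s)\,\dd t$. Under \eqref{eq:bootstrap} we have $\mathfrak N_s\le 2\eta\le1$. Also $\int_{t_0}^TK_a\,\dd t\le C_{t_0,T}$ uniformly in $\eps$, by \eqref{eq:profile-bound} and Sobolev embedding of the profile derivatives; \eqref{eq:rare-bounds} gives this even for $t_0=0$.

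\textbf{Main obstacle.} I expect the main obstacle to be bookkeeping rather than analysis. The Moser estimates for the fluid commutators must involve only $\norm r_{H^s}$, $\norm v_{H^s}$ and derivatives of the profile, never an undifferentiated far-field value in $L^2$. In addition, the $\pa_tn$ and $\pa_tA$ weights bring in $\mathfrak R_s^a$ linearly, multiplying $\cE_{s,\eps}$. I would absorb this term via $\mathfrak R_s^a\cE_{s,\eps}\le(\mathfrak R_s^a)^2+\cE_{s,\eps}^2$ together with $\cE_{s,\eps}\le C\eta^2\le C\mathfrak N_s$, or simply note that $\mathfrak R_s^a\le C\eps^{2M+2}\le1$ by \eqref{eq:residual-bound} and fold it into $K_a$.
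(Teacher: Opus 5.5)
Your proposal is correct and follows the paper's proof essentially step by step: you pair the commuted fluid equations with $h_i'(n)\pa^\alpha r$ and $n\pa^\alpha v$, use the acoustic cancellation through $nh_i'(n)=p_i'(n)$, absorb the electric work into the two electric blocks via \Cref{lem:zero-electric} at order zero and \Cref{lem:electric-compensation} for $1\le\abs\alpha\le s$, and close with Gronwall using $\int_{t_0}^\tau(K_a+\mathfrak N_s)\,\dd t\le C_{M,t_0,T}+2\eta(T-t_0)$. The only deviation is cosmetic: the paper uses the exact continuity equation $\pa_tn+\Div(nu)=0$, so the $n$-weighted velocity transport term is an exact time derivative and no $\mathfrak R_s^a$ appears, whereas your route through \Cref{lem:time-derivatives} produces a linear $\mathfrak R_s^a\cE_{s,\eps}$ term; your suggestion of folding $\mathfrak R_s^a\le C$ into $K_a$ handles it correctly, but your other suggestion should read $\cE_{s,\eps}\le C_0\mathfrak N_s^2\le C\eta\,\mathfrak N_s$, since $\eta^2\le C\mathfrak N_s$ fails when $\mathfrak N_s$ is small.
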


\begin{proof}
Let $a(n)=h_i'(n)$ and, for $\abs\alpha\le s$, write the commuted fluid
equations as
\begin{align}
 \pa_t\pa^\alpha r+u\cdot\nabla\pa^\alpha r
 +n\Div\pa^\alpha v&=F_\alpha^r, 
 \label{eq:commuted-fluid-mass}\\
 \pa_t\pa^\alpha v+u\cdot\nabla\pa^\alpha v
 +a(n)\nabla\pa^\alpha r+\nabla\pa^\alpha\psi
 &=F_\alpha^v, 
 \label{eq:commuted-fluid-momentum}
\end{align}
where
\begin{align}
 F_\alpha^r={}&-[\pa^\alpha,u]\cdot\nabla r
 -[\pa^\alpha,n]\Div v
 -\pa^\alpha(v\cdot\nabla n^a+r\Div u^a+\cR_n^a),
 \label{eq:Fr-alpha}\\
 F_\alpha^v={}&-[\pa^\alpha,u]\cdot\nabla v
 -[\pa^\alpha,a(n)]\nabla r
 -\pa^\alpha\left(v\cdot\nabla u^a
 +[a(n)-a(n^a)]\nabla n^a+\cR_u^a\right).
 \label{eq:Fv-alpha}
\end{align}
For $\alpha=0$, commutators are understood to vanish.

The tame estimates in two dimensions give
\begin{equation}
 \sum_{\abs\alpha\le s}
 \left(\norm{F_\alpha^r}_2+\norm{F_\alpha^v}_2\right)
 \le C(K_a+\mathfrak N_s)(\norm r_{H^s}+\norm v_{H^s})
 +C\mathfrak R_s^a. 
 \label{eq:fluid-source-bound}
\end{equation}
Indeed, for $1\le\abs\alpha\le s$,
\begin{align}
 \norm{[\pa^\alpha,u]\nabla f}_2
 &\le C\left(\norm{\nabla u}_\infty\norm f_{H^s}
 +\norm{\nabla u}_{H^{s-1}}\norm{\nabla f}_\infty\right),
 \label{eq:transport-commutator-full}\\
 \norm{[\pa^\alpha,a(n)]\nabla r}_2
 &\le C(1+K_a+\norm r_{H^s})\norm r_{H^s},
 \label{eq:pressure-commutator-full}
\end{align}
and the analogous estimate holds for $[\pa^\alpha,n]\Div v$.
The composition constants are uniform because $n$ remains in the fixed
compact interval $I^\sharp$.  The profile terms use one bounded derivative
of $(n^a,u^a)$ and the localized higher derivatives recorded by $K_a$.
This proves \eqref{eq:fluid-source-bound} without placing the nonconstant
profiles themselves in $L^2$.

Multiply \eqref{eq:commuted-fluid-mass} by
$a(n)\pa^\alpha r$ and
\eqref{eq:commuted-fluid-momentum} by
$n\pa^\alpha v$.  The transport terms are exact coefficient derivatives:
\begin{align}
 \int a(n)\pa^\alpha r
 (\pa_t+u\cdot\nabla)\pa^\alpha r
 ={}&\frac12\frac{\dd}{\dd t}\int a(n)\abs{\pa^\alpha r}^2
 -\frac12\int
 \bigl(\pa_ta(n)+\Div(a(n)u)\bigr)
 \abs{\pa^\alpha r}^2, 
 \label{eq:density-transport-energy}\\
 \int n\pa^\alpha v\cdot
 (\pa_t+u\cdot\nabla)\pa^\alpha v
 ={}&\frac12\frac{\dd}{\dd t}\int n\abs{\pa^\alpha v}^2.
 \label{eq:velocity-transport-energy}
\end{align}
The second identity uses the full continuity equation
$\pa_tn+\Div(nu)=0$.  In the first,
\[
 \pa_ta(n)+u\cdot\nabla a(n)
 =-n a'(n)\Div u,
\]
so its coefficient is bounded by $C(K_a+\mathfrak N_s)$.

Since $na(n)=p_i'(n)$, the two principal acoustic terms satisfy the exact
identity
\begin{equation}
 \int p_i'(n)\pa^\alpha r\,\Div\pa^\alpha v\,\dd x
 +\int p_i'(n)\pa^\alpha v\cdot\nabla\pa^\alpha r\,\dd x
 =-\int \nabla p_i'(n)\cdot\pa^\alpha v\,
 \pa^\alpha r\,\dd x. \label{eq:nonlinear-acoustic-cancellation}
\end{equation}
The right-hand side is bounded by
$C(K_a+\mathfrak N_s)\cE_{s,\eps}$ because
$\nabla p_i'(n)=p_i''(n)(\nabla n^a+\nabla r)$ is bounded in $L^\infty$.
The commuted sources satisfy, by \eqref{eq:fluid-source-bound},
\begin{align}
 \sum_{\abs\alpha\le s}
 \abs{\ip{a(n)F_\alpha^r}{\pa^\alpha r}
 +\ip{nF_\alpha^v}{\pa^\alpha v}}
 \le C(K_a+\mathfrak N_s)\cE_{s,\eps}
 +C(\mathfrak R_s^a)^2. 
 \label{eq:fluid-source-energy}
\end{align}
Here Young's inequality is used only for the residual factors.

At order zero, insert \Cref{lem:zero-electric}.  At orders
$1\le\abs\alpha\le s$, insert \Cref{lem:electric-compensation}.  The two
identities produce precisely the last two lines of
\eqref{eq:energy-definition}.  Summing proves
\eqref{eq:differential-energy}.  Under the bootstrap bound,
\begin{equation}
 \int_{t_0}^\tau(K_a(t)+\mathfrak N_s(t))\,\dd t
 \le C_{M,t_0,T}+2\eta(T-t_0). 
 \label{eq:coefficient-integral}
\end{equation}
Gronwall's inequality applied to \eqref{eq:differential-energy} therefore
gives \eqref{eq:integrated-energy}, with a constant depending on the fixed
profile and interval but not on $\eps$ or $\tau$.
\end{proof}

We explain why warm-ion pressure is used.
The screened part controls $\psi$, but not $r$ uniformly at high frequency.
The positive coefficient $h_i'(n)$ provides the structural mechanism to control 
the $\eps$-independent density energy. The cold-ion case requires a completely 
different topology and derivative hierarchy. 

\subsection{Existence, continuation, and proof of the main theorem}\label{sec:existence}

We give the construction because uniform continuation is part of the claim.
At each fixed time denote by $\mathcal P_\eps(t,r)$ the solution $\psi$
supplied by \Cref{prop:poisson-resolver}.  The dependence on $t$ enters only
through the prescribed composite profile and its residual.  The local
Lipschitz estimate and \eqref{eq:poisson-time-equation} show that
\begin{equation}
 \mathcal P_\eps(t,\cdot):\{r\in H^s:\norm r_{H^s}<\eta_P\}
 \longrightarrow\{\psi:\norm\psi_{\cG^s_\eps}<C\eta_P\}
 \label{eq:poisson-map}
\end{equation}
is smooth on a smaller ball, uniformly for $t\in[t_0,T]$.  More precisely,
implicit differentiation of \eqref{eq:nonlinear-poisson} gives
\begin{equation}
 D_r\mathcal P_\eps(t,r)f
 =\left(-\eps^2\Delta+e^{\phi^a+\mathcal P_\eps(t,r)}\right)^{-1}f.
 \label{eq:poisson-Frechet}
\end{equation}
Higher Fr\'echet derivatives are obtained by differentiating products of
the lower ones and satisfy fixed-$\eps$ tame estimates.  The graph estimates
for the first derivative, rather than the unweighted norm of
$\nabla\mathcal P_\eps$, are uniform in $\eps$.

For fixed $\eps>0$, ordinary elliptic regularity improves the graph estimate:
if $r\in H^s$, then $\mathcal P_\eps(t,r)\in H^{s+2}$, with a constant that
may depend on $\eps^{-1}$.  Consequently, after substituting
$\psi=\mathcal P_\eps(t,r)$, the fluid equations form a standard symmetrizable
quasilinear hyperbolic system with a smooth nonlocal source.

\begin{proposition}[Fixed-$\eps$ local theory and continuation]
\label{prop:local-theory}
Fix $0<\eps\le\eps_0$.  Compatible $H^s$ data in the positive state range
with $\norm{r_0}_{H^s}<\eta_P/2$ generate a unique maximal perturbative
solution on $[t_0,T_\eps^{\max})$.  Here maximality is taken in the Poisson
chart \eqref{eq:poisson-map} and in the positive density region.
If $T_\eps^{\max}<\infty$, then either
\begin{equation}
\begin{split}
 &\liminf_{t\uparrow T_\eps^{\max}}\inf_x n(t,x)=0,
 \quad\mathrm{or}\quad
 \limsup_{t\uparrow T_\eps^{\max}}
 \bigl(\norm{r(t)}_{H^s}+\norm{v(t)}_{H^s}\bigr)=\infty,\\
 &\hspace{38mm}\mathrm{or}\quad
 \limsup_{t\uparrow T_\eps^{\max}}\norm{r(t)}_{H^s}\ge\eta_P.
 \label{eq:continuation-criterion}
\end{split}
\end{equation}
The local lifespan in this proposition is allowed to depend on $\eps$.
\end{proposition}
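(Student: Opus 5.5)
The plan is to reduce the problem, for fixed $\eps$, to a quasilinear symmetric hyperbolic system in the fluid perturbation $(r,v)$ alone. The potential is eliminated through the smooth nonlocal map $\psi=\mathcal P_\eps(t,r)$ of \eqref{eq:poisson-map}. Substituting into \eqref{eq:pert-mass}--\eqref{eq:pert-momentum} gives
\[
 \pa_tr+u\cdot\nabla r+n\Div v=F_r(t,r,v),\qquad
 \pa_tv+u\cdot\nabla v+h_i'(n)\nabla r=-\nabla\mathcal P_\eps(t,r)+F_v(t,r,v),
\]
with $u=u^a+v$ and $n=n^a+r$. For fixed $\eps$, elliptic regularity gives $\norm{\nabla\mathcal P_\eps(t,r)}_{H^s}\le C_\eps\norm{r}_{H^s}+C_\eps\norm{\cR_\phi^a}_{H^s}$. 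By \eqref{eq:poisson-Frechet} together with this regularity, the map $r\mapsto\nabla\mathcal P_\eps(t,r)$ is locally Lipschitz $H^s\to H^s$, and even $H^{s-1}\to H^{s-1}$, on the ball $\norm r_{H^s}<\eta_P$, uniformly in $t\in[t_0,T]$. The electric term is therefore a zeroth-order, Lipschitz nonlocal source. The principal part is the classical symmetrizable Euler operator with symmetrizer $\mathrm{diag}(h_i'(n),n)$, which is positive definite on the compact state range.

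Existence on a short interval of length $T_\eps>0$ would follow the same Friedrichs-mollifier scheme used for the correctors in \Cref{prop:composite}. One solves the ODE $\pa_tW_\nu+J_\nu[(A(W_\nu)\cdot\nabla)J_\nu W_\nu]=J_\nu F(W_\nu)$ on band-limited $H^s$ functions, with $W=(r,v)$. One then derives $\nu$-uniform $H^s$ energy bounds using the symmetrizer, the commutator estimates \eqref{eq:transport-commutator-full}--\eqref{eq:pressure-commutator-full}, and \Cref{lem:appendix-cutoff}. These bounds hold on a time interval depending only on the data size, the distance to the boundary of the state range, and $C_\eps$. Next one proves the Cauchy property in $C([t_0,T_\eps];L^2)$ by an $L^2$ difference estimate. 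Here the Lipschitz bound on $\mathcal P_\eps$ in $L^2$-based norms controls the electric difference. Interpolation then gives convergence in $C H^{s'}$ for $s'<s$. Passing to the limit yields a solution in $L^\infty H^s\cap C H^{s'}$. Continuity in $H^s$ comes from the standard argument: weak continuity plus continuity of the symmetrized energy norm, or equivalently Bona--Smith regularization of the data. The potential is recovered as $\psi=\mathcal P_\eps(t,r)$, and \eqref{eq:poisson-time} supplies its time regularity. Uniqueness follows from the same $L^2$ difference estimate and Gronwall. The energy method needs the positive lower bound on $n$ and the Poisson chart, which is why uniqueness is stated within that class.

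For the continuation criterion, define $T_\eps^{\max}$ as the supremum of times to which the solution extends while staying in the chart $\norm{r}_{H^s}<\eta_P$ with positive density. Suppose $T_\eps^{\max}<\infty$ and none of the three alternatives in \eqref{eq:continuation-criterion} occurs. Then $\norm{(r,v)}_{H^s}$ is bounded, $\norm r_{H^s}\le\eta_P-\kappa$, and $\inf n\ge\kappa>0$ on $[t_0,T_\eps^{\max})$ for some $\kappa>0$. Under these bounds the local existence time depends only on $\kappa$, the $H^s$ bound and $C_\eps$. Restarting from any time sufficiently close to $T_\eps^{\max}$ therefore extends the solution past $T_\eps^{\max}$, a contradiction.

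The main obstacle is the uniformity of the local lifespan with respect to the distance from the chart boundary, together with the $C_\eps$ dependence. This is acceptable here because the statement allows an $\eps$-dependent lifespan. I would handle it by shrinking the working ball to $\norm r_{H^s}\le\eta_P-\kappa/2$, so that the resolver estimates and the smooth dependence of $\mathcal P_\eps$ hold with fixed constants, and by tracking these constants explicitly in the mollified energy inequality.
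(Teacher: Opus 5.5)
Your proposal is correct and follows essentially the same route as the paper: eliminate $\psi$ through $\mathcal P_\eps$, build Friedrichs approximations with $\nu$-uniform fixed-$\eps$ $H^s$ bounds, prove a low-norm Cauchy estimate and interpolate, obtain strong $H^s$ continuity from weak continuity plus continuity of the energy norm, and prove the continuation criterion by restarting. The differences are minor. You run the Cauchy and uniqueness estimates in $L^2$ rather than in $H^{s-2}$ and via \Cref{lem:difference}; this is legitimate at fixed $\eps$, since $\norm{\nabla(\psi_1-\psi_2)}_2\le C\eps^{-1}\norm{r_1-r_2}_2$ follows at once from the difference equation. You also make the dependence of the local lifespan on the chart margin $\kappa$ explicit, whereas the paper's restart step uses it only implicitly.
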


\begin{proof}
We separate construction, compactness, and continuation.

\emph{Step 1: Friedrichs equations.}
Let $J_\nu$ be a self-adjoint Fourier multiplier on $\Om$ with symbol
$\chi(\xi/\nu)$, where $\chi$ is even, smooth, compactly supported, and
equal to one near the origin.  Put
$Z_\nu=(r_\nu,v_\nu)$,
$n_\nu=n^a+r_\nu$, $u_\nu=u^a+v_\nu$, and
\begin{equation}
 \psi_\nu=\mathcal P_\eps(t,r_\nu).
 \label{eq:Friedrichs-potential}
\end{equation}
On the range of $J_\nu$, solve
\begin{align}
 \pa_tr_\nu={}&-J_\nu\bigl[
 u_\nu\cdot\nabla J_\nu r_\nu
 +n_\nu\Div J_\nu v_\nu
 +v_\nu\cdot\nabla n^a+r_\nu\Div u^a+\cR_n^a\bigr],
 \label{eq:Friedrichs-mass}\\
 \pa_tv_\nu={}&-J_\nu\bigl[
 u_\nu\cdot\nabla J_\nu v_\nu
 +h_i'(n_\nu)\nabla J_\nu r_\nu+
 \nabla\psi_\nu+v_\nu\cdot\nabla u^a\notag\\
 &\hspace{31mm}
 +[h_i'(n_\nu)-h_i'(n^a)]\nabla n^a+\cR_u^a\bigr],
 \label{eq:Friedrichs-momentum}
\end{align}
with $Z_\nu(t_0)=J_\nu Z_0$.  One may place $J_\nu$ on every occurrence of
$Z_\nu$ without changing the solution because the vector field takes values
in the band-limited subspace.  For fixed $\eps$, elliptic regularity and
\eqref{eq:poisson-Frechet} imply
\begin{equation}
 \norm{\mathcal P_\eps(t,r)}_{H^{s+2}}
 \le C_\eps(1+\norm r_{H^s}),
 \qquad
 \norm{D_r\mathcal P_\eps(t,r)f}_{H^{s+2}}
 \le C_\eps\norm f_{H^s}
 \label{eq:fixed-epsilon-elliptic}
\end{equation}
on compact subsets of the chart.  Thus the right-hand side of
\eqref{eq:Friedrichs-mass}--\eqref{eq:Friedrichs-momentum} is locally
Lipschitz on the band-limited $H^s$ space, and Picard's theorem gives a
smooth solution up to its first exit from that subset.

\emph{Step 2: bounds independent of the cutoff.}
The symmetric-hyperbolic calculation at fixed $\eps$ gives, while
$\norm{Z_\nu}_{H^s}\le R$, $n_\nu\ge c>0$, and
$\norm{r_\nu}_{H^s}\le3\eta_P/4$,
\begin{equation}
 \frac{\dd}{\dd t}\norm{Z_\nu}_{H^s}^2
 \le C_{\eps,R,c}\bigl(1+\norm{Z_\nu}_{H^s}^2\bigr).
 \label{eq:Friedrichs-fixed-epsilon-energy}
\end{equation}
The constant is independent of $\nu$: $J_\nu$ is self-adjoint, commutes
with derivatives, is bounded on every Sobolev space, and its commutator
with a variable coefficient is controlled by
\eqref{eq:cutoff-commutator}.  Taking a time
$\tau_\eps>t_0$ sufficiently close to $t_0$, the ODE estimate, Sobolev
embedding, and the initial margins give
\begin{equation}
 \sup_{t_0\le t\le\tau_\eps}\norm{Z_\nu(t)}_{H^s}\le2\norm{Z_0}_{H^s}+1,
 \quad n_\nu\ge\tfrac12\inf n_0,
 \quad\norm{r_\nu}_{H^s}<\tfrac34\eta_P.
 \label{eq:Friedrichs-common-time}
\end{equation}
This is a common time for all sufficiently large $\nu$.  Low-frequency
approximants can be enlarged to the same interval by shortening it once.

\emph{Step 3: the Cauchy limit.}
Because $Z_\nu$ remains in the range of $J_\nu$, the equations may be
written
\begin{equation}
 \pa_tZ_\nu=J_\nu\mathcal Q_\eps(t,Z_\nu),
 \label{eq:Friedrichs-vector-field}
\end{equation}
where $\mathcal Q_\eps$ is the unregularized fluid right-hand side after
substitution of $\psi=\mathcal P_\eps(t,r)$.  The single spatial derivative
in the hyperbolic part is important here: \eqref{eq:Friedrichs-common-time}
and the fixed-$\eps$ elliptic estimate give only
\begin{equation}
 \sup_\nu\norm{\mathcal Q_\eps(t,Z_\nu)}_{H^{s-1}}
 \le C_\eps.
 \label{eq:Friedrichs-vector-field-bound}
\end{equation}
Thus the Cauchy estimate must be made in $H^{s-2}$, not in $H^{s-1}$.

Put $W_{\nu\mu}=Z_\nu-Z_\mu$.  From
\[
 J_\nu\mathcal Q_\eps(t,Z_\nu)
 -J_\mu\mathcal Q_\eps(t,Z_\mu)
 =\mathcal Q_\eps(t,Z_\nu)-\mathcal Q_\eps(t,Z_\mu)
 +(J_\nu-I)\mathcal Q_\eps(t,Z_\nu)
 -(J_\mu-I)\mathcal Q_\eps(t,Z_\mu),
\]
the principal part of the difference is the unregularized symmetric
hyperbolic operator.  Commuting through order $s-2$, integrating that
principal part by parts, and estimating the coefficient differences by the
tame product bounds gives
\begin{equation}
 \frac{\dd}{\dd t}\norm{W_{\nu\mu}}_{H^{s-2}}^2
 \le C_\eps\norm{W_{\nu\mu}}_{H^{s-2}}^2
 +C\sum_{\kappa\in\{\nu,\mu\}}
 \norm{(J_\kappa-I)
        \mathcal Q_\eps(t,Z_\kappa)}_{H^{s-2}}^2.
 \label{eq:Friedrichs-difference-differential}
\end{equation}
Here coefficient differences are controlled in $H^{s-2}$ by the uniform
$H^s$ bound and the tame product estimates.  For the electric coefficient,
subtracting the two screened equations and applying fixed-$\eps$ elliptic
regularity at order $s-2$ yields
\begin{equation}
 \norm{\nabla(\psi_\nu-\psi_\mu)}_{H^{s-2}}
 \le C_\eps\norm{r_\nu-r_\mu}_{H^{s-2}}.
 \label{eq:approximate-potential-difference}
\end{equation}
The constant may depend on $\eps^{-1}$, which is harmless in this
fixed-$\eps$ construction.  By \Cref{lem:appendix-cutoff} and
\eqref{eq:Friedrichs-vector-field-bound},
\begin{equation}
 \sum_{\kappa\in\{\nu,\mu\}}
 \norm{(J_\kappa-I)\mathcal Q_\eps(t,Z_\kappa)}_{H^{s-2}}
 \le C_\eps(\nu^{-1}+\mu^{-1}).
 \label{eq:cutoff-error}
\end{equation}
After integration, the difference inequality is
\begin{equation}
 \sup_{t_0\le t\le\tau_\eps}
 \norm{Z_\nu(t)-Z_\mu(t)}_{H^{s-2}}^2
 \le C_\eps\left(
 \norm{(J_\nu-J_\mu)Z_0}_{H^{s-2}}^2
 +(\nu^{-1}+\mu^{-1})^2\right).
 \label{eq:Friedrichs-Cauchy}
\end{equation}
The right-hand side tends to zero because $Z_0\in H^s$.  Hence
$Z_\nu\to Z$ strongly in $C([t_0,\tau_\eps];H^{s-2})$ and weak-star in
$L^\infty([t_0,\tau_\eps];H^s)$.  Interpolation between these two bounds
gives, in particular,
\begin{equation}
 Z_\nu\longrightarrow Z
 \quad\hbox{in }C([t_0,\tau_\eps];H^{s'})
 \quad\text{for every }s'<s,
 \label{eq:Friedrichs-interpolated-convergence}
\end{equation}
including $s'=s-1$.  Choosing $s'>2$ is enough to pass all products in the
fluid equations.  The corresponding fixed-$\eps$ Lipschitz estimate for
$\mathcal P_\eps$, together with
\eqref{eq:Friedrichs-interpolated-convergence}, passes the nonlinear Poisson
constraint and the electric term.

It remains to recover strong continuity at the top index without assuming
top-order convergence of the Friedrichs sequence.  The limit equations and
the bounds just obtained imply
\[
 Z\in L^\infty H^s\cap C H^{s-1},\qquad
 \pa_tZ\in L^\infty H^{s-1},
\]
and therefore $Z\in C_w H^s$.  Apply a spatial mollifier $K_\delta$ to the
limit equations, commute $\pa^\alpha$, $\abs\alpha\le s$, through the
coefficients, and take the scalar product with the standard positive
symmetrizer.  The Moser estimates in \Cref{app:moser}, the uniform $H^s$
bound, and $Z\in C H^{s-1}$ show that every commutator is uniformly
integrable in time and converges in $L^1$ as $\delta\downarrow0$.  Thus the
limit itself satisfies, on every $[t_1,t_2]\subset[t_0,\tau_\eps]$, the
integral $H^s$ energy identity
\begin{equation}
 \mathfrak E_s(t_2)-\mathfrak E_s(t_1)
 =\int_{t_1}^{t_2}\mathfrak G_s(\tau)\,\dd\tau,
 \qquad \mathfrak G_s\in L^1(t_0,\tau_\eps).
 \label{eq:limit-top-energy-identity}
\end{equation}
In particular $\mathfrak E_s$ is continuous.  At a fixed time $t_1$, freeze
the positive symmetrizer at $Z(t_1)$.  Its difference from the symmetrizer
at time $t$ tends to zero in $L^\infty$ as $t\to t_1$, by the
$C H^{s-1}$ convergence.  Consequently
\eqref{eq:limit-top-energy-identity} gives convergence of the norms in that
fixed weighted Hilbert space.  Weak $H^s$ continuity and the Hilbert-space
fact ``weak convergence plus convergence of norms implies strong
convergence'' yield
\begin{equation}
 Z\in C([t_0,\tau_\eps];H^s).
 \label{eq:top-strong-continuity}
\end{equation}
Since the right-hand side of the limit system is then continuous in
$H^{s-1}$, the equations give $Z\in C^1H^{s-1}$;
\eqref{eq:poisson-time} gives the asserted time regularity of $\psi$.

\emph{Step 4: uniqueness and restart.}
The difference estimate in \Cref{lem:difference} below, used on the common
existence interval, gives uniqueness and continuous dependence.  The
construction can be restarted from any time $t_1$ at which
\begin{equation}
 \inf_x n(t_1,x)>0,\qquad
 \norm{Z(t_1)}_{H^s}<\infty,\qquad
 \norm{r(t_1)}_{H^s}<\eta_P.
 \label{eq:restart-conditions}
\end{equation}
Indeed, these strict conditions persist for a short time by Sobolev and
$H^s$ continuity, and \eqref{eq:fixed-epsilon-elliptic} supplies the
fixed-$\eps$ potential bound.  If none of the alternatives in
\eqref{eq:continuation-criterion} occurs, a sequence
$t_j\uparrow T_\eps^{\max}$ has uniform margins in
\eqref{eq:restart-conditions}; the local construction then continues the
solution beyond $T_\eps^{\max}$, a contradiction.  This proves the stated
criterion.
\end{proof}

\begin{lemma}[Difference estimate]\label{lem:difference}
Let two solutions remain in the bootstrap set and have the same composite
profile.  Put
\[
 \dot r=r^{(1)}-r^{(2)},\qquad
 \dot v=v^{(1)}-v^{(2)},\qquad
 \dot\psi=\psi^{(1)}-\psi^{(2)}
\]
and
\begin{equation}
 B=e^{\phi^a}\int_0^1
 e^{\theta\psi^{(1)}+(1-\theta)\psi^{(2)}}\,\dd\theta.
 \label{eq:difference-screening}
\end{equation}
Define
\begin{equation}
 \cE_{s-1,\eps}^{\rm diff}
 =\frac12\sum_{\abs\alpha\le s-1}\int_\Om
 \bigl(n^{(1)}\abs{\pa^\alpha\dot v}^2
 +h_i'(n^{(1)})\abs{\pa^\alpha\dot r}^2
 +B\abs{\pa^\alpha\dot\psi}^2
 +\eps^2\abs{\nabla\pa^\alpha\dot\psi}^2\bigr)\,\dd x.
 \label{eq:difference-energy}
\end{equation}
Then
\begin{equation}
  \frac{\dd}{\dd t}\cE_{s-1,\eps}^{\rm diff}
  \le C\bigl(K_a+\mathfrak N_s^{(1)}+\mathfrak N_s^{(2)}\bigr)
  \cE_{s-1,\eps}^{\rm diff}. \label{eq:difference}
\end{equation}
The constant is independent of $\eps$.
\end{lemma}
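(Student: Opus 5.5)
The argument will rerun the compensated energy computation of \Cref{prop:energy} one derivative lower, now for the difference system.

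\textbf{Difference system.} Subtract the perturbation equations \eqref{eq:pert-mass}--\eqref{eq:pert-momentum} for the two solutions. Keep the transport velocity $u^{(1)}$, density $n^{(1)}$ and acoustic coefficient $h_i'(n^{(1)})$ on the left. This gives
\[
\pa_t\dot r+u^{(1)}\cdot\nabla\dot r+n^{(1)}\Div\dot v=\dot F^r,\qquad
\pa_t\dot v+u^{(1)}\cdot\nabla\dot v+h_i'(n^{(1)})\nabla\dot r+\nabla\dot\psi=\dot F^v .
\]
The sources contain $\dot v\cdot\nabla r^{(2)}$, $\dot r\,\Div v^{(2)}$, $[h_i'(n^{(1)})-h_i'(n^{(2)})]\nabla r^{(2)}$, and the profile terms. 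The residuals cancel because the profile is common. The Poisson difference is
\[
-\eps^2\Delta\dot\psi+B\dot\psi=\dot r,
\]
with $B$ as in \eqref{eq:difference-screening}. Apply $\pa^\alpha$ with $\abs\alpha\le s-1$. Each source, and each commutator of the form $[\pa^\alpha,\cdot]$, places at most $s-1$ derivatives on a dotted quantity and at most $s$ on an undotted one. The tame estimates then bound them in $L^2$ by $C(K_a+\mathfrak N_s^{(1)}+\mathfrak N_s^{(2)})(\norm{\dot r}_{H^{s-1}}+\norm{\dot v}_{H^{s-1}})$. The coefficient $\nabla r^{(2)}$ only needs $L^\infty$ together with an $H^{s-1}$ norm, which is available because $s\ge5$.

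\textbf{Fluid part.} Multiply the commuted mass equation by $h_i'(n^{(1)})\pa^\alpha\dot r$ and the commuted momentum equation by $n^{(1)}\pa^\alpha\dot v$. The transport terms become exact time derivatives up to coefficient terms, using the continuity equation for $n^{(1)}$ as in \eqref{eq:density-transport-energy}--\eqref{eq:velocity-transport-energy}. The acoustic terms cancel as in \eqref{eq:nonlinear-acoustic-cancellation}.

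\textbf{Electric work.} This is the main obstacle, since $\nabla\pa^\alpha\dot\psi$ must not be estimated directly. I would repeat the proof of \Cref{lem:electric-compensation} verbatim:
\begin{enumerate}
\item Integrate $-\int n^{(1)}\pa^\alpha\dot v\cdot\nabla\pa^\alpha\dot\psi$ by parts.
\item Replace $\Div(n^{(1)}\pa^\alpha\dot v)$ using the commuted mass equation, which produces $-\pa_t\pa^\alpha\dot r-u^{(1)}\cdot\nabla\pa^\alpha\dot r$ plus controlled terms.
\item Move the transport derivative onto $\pa^\alpha\dot\psi$.
\item Substitute the commuted relation
\[
\pa^\alpha\dot r=-\eps^2\Delta\pa^\alpha\dot\psi+B\pa^\alpha\dot\psi+\dot{\mathcal C}_\alpha .
\]
\end{enumerate}
For $\abs\alpha=0$ the same pairing applies directly, with no commutator and no entropy functional, since the difference equation is already linear in $\dot\psi$.

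Next, differentiate the commuted relation in time and pair it with $\pa^\alpha\dot\psi$. This generates
\[
\frac12\frac{\dd}{\dd t}\int\bigl(B\abs{\pa^\alpha\dot\psi}^2+\eps^2\abs{\nabla\pa^\alpha\dot\psi}^2\bigr)
\]
plus $\frac12\int\pa_tB\abs{\pa^\alpha\dot\psi}^2$ plus $\langle\pa_t\dot{\mathcal C}_\alpha,\pa^\alpha\dot\psi\rangle$.

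\textbf{Delicate bounds.} Two estimates carry the difficulty.
\begin{itemize}
\item $\norm{\pa_tB}_\infty$ is controlled by \Cref{lem:time-derivatives} applied to each solution.
\item $\dot{\mathcal C}_\alpha=[\pa^\alpha,B]\dot\psi$, together with $\nabla\dot{\mathcal C}_\alpha$ and $\pa_t\dot{\mathcal C}_\alpha$, must be bounded by $C(K_a+\mathfrak N_s^{(1)}+\mathfrak N_s^{(2)})$ times norms of $\dot\psi$ in $H^{s-1}$ and of $\pa_t\dot\psi$ in $H^{s-2}$.
\end{itemize}
For the second point, $B$ is only known in $H^s$ plus profile terms, but one derivative lower this suffices. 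The needed time-derivative bound on $\dot\psi$ comes from differentiating the Poisson difference in time and applying \Cref{lem:linear-screened} at order $s-2$. This gives a bound by $\norm{\pa_t\dot r}_{H^{s-2}}+\norm{\dot\psi}_{H^{s-1}}$, with coefficients depending on $\pa_t\psi^{(i)}$. In turn, $\norm{\pa_t\dot r}_{H^{s-2}}$ is bounded by the difference fluid equations.

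\textbf{Closing.} Finally, use the screened estimate of \Cref{lem:linear-screened} with coefficient $B$, at order $s-1$, to check that $\cE^{\rm diff}_{s-1,\eps}$ dominates $\norm{\dot r}_{H^{s-1}}^2+\norm{\dot v}_{H^{s-1}}^2+\norm{\dot\psi}_{\cG^{s-1}_\eps}^2$ uniformly in $\eps$. Summing over $\abs\alpha\le s-1$ then yields \eqref{eq:difference}. No step involves $\eps^{-1}$, because every top derivative of $\dot\psi$ appears only as $\eps\nabla\pa^\alpha\dot\psi$.
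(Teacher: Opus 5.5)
Your proposal is correct and follows essentially the same route as the paper. It writes the difference system with the first solution's coefficients on the left and uses the exact screened relation $-\eps^2\Delta\dot\psi+B\dot\psi=\dot r$, commuted to order $s-1$, with $[\pa^\alpha,B]\dot\psi$ controlled together with its gradient and time derivative. It obtains $\pa_t\dot\psi$ in $H^{s-2}$ from the time-differentiated difference Poisson equation, and it runs the same integrate-by-parts, transport and time-differentiated-constraint compensation of the electric work. Your closing appeal to \Cref{lem:linear-screened} at order $s-1$ is not needed: coercivity of $\cE^{\rm diff}_{s-1,\eps}$ is immediate, since the energy already contains $B\abs{\pa^\alpha\dot\psi}^2+\eps^2\abs{\nabla\pa^\alpha\dot\psi}^2$ for every $\abs\alpha\le s-1$ and $B$ is bounded below.
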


\begin{proof}
Set $n_i=n^a+r^{(i)}$, $u_i=u^a+v^{(i)}$, and
$a_i=h_i'(n_i)$.  Subtracting the perturbation equations and placing the
coefficients of the first solution on the left gives
\begin{align}
 \pa_t\dot r+u_1\cdot\nabla\dot r+n_1\Div\dot v
 ={}&-\dot v\cdot\nabla r^{(2)}-\dot r\Div v^{(2)}
 -\dot v\cdot\nabla n^a-\dot r\Div u^a,
 \label{eq:difference-fluid-mass}\\
 \pa_t\dot v+u_1\cdot\nabla\dot v+a_1\nabla\dot r
 +\nabla\dot\psi
 ={}&-\dot v\cdot\nabla(v^{(2)}+u^a)
 -(a_1-a_2)\nabla n_2.
 \label{eq:difference-fluid-momentum}
\end{align}
The difference of the nonlinear Poisson equations is the exact relation
\begin{equation}
 -\eps^2\Delta\dot\psi+B\dot\psi=\dot r.
 \label{eq:difference-poisson}
\end{equation}
The mean-value coefficient $B$ is bounded above and below uniformly, because
both solutions remain in the bootstrap set.  Put
\begin{equation}
 \Gamma(t)=1+K_a(t)+\mathfrak N_s^{(1)}(t)
 +\mathfrak N_s^{(2)}(t).
 \label{eq:difference-Gamma}
\end{equation}
The tame difference form of the composition estimate gives
\begin{equation}
 \norm{a_1-a_2}_{H^{s-1}}
 \le C\norm{\dot r}_{H^{s-1}},
 \qquad
 \norm B_{L^\infty}
 +\norm{\nabla B}_{H^{s-1}}+\norm{\pa_tB}_{H^{s-1}}
 \le C\Gamma.
 \label{eq:difference-coefficients}
\end{equation}
The coefficient itself is measured in $L^\infty$, while its spatial
derivatives are measured in $H^{s-1}$, as in
\Cref{def:profile-seminorm}.

For $\abs\alpha\le s-1$, let
$q_\alpha=\pa^\alpha\dot\psi$ and
$\rho_\alpha=\pa^\alpha\dot r$.  Commuting
\eqref{eq:difference-poisson} yields
\begin{equation}
 \rho_\alpha=-\eps^2\Delta q_\alpha+Bq_\alpha
 +\mathcal D_\alpha,
 \qquad
 \mathcal D_\alpha=[\pa^\alpha,B]\dot\psi.
 \label{eq:commuted-difference-poisson}
\end{equation}
The derivative in the commutator always falls at least once on $B$.
Consequently, the product estimates of \Cref{app:moser} give
\begin{equation}
 \sum_{\abs\alpha\le s-1}
 \left(\norm{\mathcal D_\alpha}_2+
 \norm{\nabla\mathcal D_\alpha}_2\right)
 \le C\Gamma\norm{\dot\psi}_{H^{s-1}}.
 \label{eq:difference-elliptic-commutator}
\end{equation}
The gradient estimate uses at most $s$ derivatives of $B$ and at most
$s-1$ derivatives of $\dot\psi$, which explains why the difference energy
is taken one order below the individual energies.

We record the corresponding time estimate.  Equations
\eqref{eq:difference-fluid-mass}--\eqref{eq:difference-fluid-momentum} imply
\begin{equation}
 \norm{\pa_t\dot r}_{H^{s-2}}+\norm{\pa_t\dot v}_{H^{s-2}}
 \le C\Gamma
 \left(\norm{\dot r}_{H^{s-1}}+\norm{\dot v}_{H^{s-1}}
 +\norm{\dot\psi}_{H^{s-1}}\right).
 \label{eq:difference-fluid-time}
\end{equation}
Differentiating \eqref{eq:difference-poisson} in time gives
\begin{equation}
 (-\eps^2\Delta+B)\pa_t\dot\psi
 =\pa_t\dot r-(\pa_tB)\dot\psi.
 \label{eq:difference-poisson-time}
\end{equation}
The two individual instances of \Cref{lem:time-derivatives}, together with
the difference Moser estimate, bound $\pa_tB$ in the relative coefficient
norms through order $s-1$.  Applying the screened estimate at order $s-2$ to
\eqref{eq:difference-poisson-time} therefore yields
\begin{equation}
 \norm{\pa_t\dot\psi}_{H^{s-2}}
 +\eps\norm{\nabla\pa_t\dot\psi}_{H^{s-2}}
 \le C\Gamma(\cE_{s-1,\eps}^{\rm diff})^{1/2}.
 \label{eq:difference-potential-time}
\end{equation}
Expanding $\pa_t\mathcal D_\alpha$ by Leibniz' rule, the factor
$\pa_t\dot\psi$ carries at most $s-2$ spatial derivatives, while the factor
$\pa_tB$ is paired with a lower derivative of $\dot\psi$.  Thus
\begin{equation}
 \sum_{\abs\alpha\le s-1}
 \norm{\pa_t\mathcal D_\alpha}_2
 \le C\Gamma(\cE_{s-1,\eps}^{\rm diff})^{1/2}.
 \label{eq:difference-commutator-time}
\end{equation}

We next perform the top electric calculation explicitly.  Commuting
\eqref{eq:difference-fluid-mass} gives
\begin{equation}
 \pa_t\rho_\alpha+u_1\cdot\nabla\rho_\alpha
 +n_1\Div\pa^\alpha\dot v=G_\alpha,
 \label{eq:commuted-difference-mass}
\end{equation}
where, by Leibniz' rule and \eqref{eq:difference-coefficients},
\begin{equation}
 \sum_{\abs\alpha\le s-1}\norm{G_\alpha}_2
 \le C\Gamma
 \left(\norm{\dot r}_{H^{s-1}}+\norm{\dot v}_{H^{s-1}}\right).
 \label{eq:difference-mass-source}
\end{equation}
It follows that
\begin{equation}
 \Div(n_1\pa^\alpha\dot v)
 =-\pa_t\rho_\alpha-u_1\cdot\nabla\rho_\alpha
 +G_\alpha+\nabla n_1\cdot\pa^\alpha\dot v.
 \label{eq:difference-divergence}
\end{equation}
After integration by parts, the electric work in the commuted momentum
equation is the pairing of the right-hand side of
\eqref{eq:difference-divergence} with $q_\alpha$.

Differentiate \eqref{eq:commuted-difference-poisson} in time and pair with
$q_\alpha$.  One obtains
\begin{equation}
 -\ip{\pa_t\rho_\alpha}{q_\alpha}
 ={}-\frac12\frac{\dd}{\dd t}\int
 \left(\eps^2\abs{\nabla q_\alpha}^2+Bq_\alpha^2\right)\,\dd x
 -\frac12\int(\pa_tB)q_\alpha^2\,\dd x
 -\ip{\pa_t\mathcal D_\alpha}{q_\alpha}.
 \label{eq:difference-electric-time}
\end{equation}
For the transport pairing, integrate once before inserting the elliptic
equation:
\begin{equation}
 -\ip{u_1\cdot\nabla\rho_\alpha}{q_\alpha}
 =\int\rho_\alpha u_1\cdot\nabla q_\alpha
 +\int(\Div u_1)\rho_\alpha q_\alpha.
 \label{eq:difference-electric-transport}
\end{equation}
The first term on the right is handled with
\eqref{eq:commuted-difference-poisson}.  The gradient and screened pieces
satisfy
\begin{align}
 -\eps^2\int(u_1\cdot\nabla q_\alpha)\Delta q_\alpha
 ={}&\eps^2\int\pa_ku_{1j}\,\pa_jq_\alpha\pa_kq_\alpha
 -\frac{\eps^2}{2}\int(\Div u_1)\abs{\nabla q_\alpha}^2,
 \label{eq:difference-transport-gradient}\\
 \int Bq_\alpha u_1\cdot\nabla q_\alpha
 ={}&-\frac12\int\Div(Bu_1)q_\alpha^2.
 \label{eq:difference-transport-screening}
\end{align}
For the commutator piece, use
\begin{equation}
 \int\mathcal D_\alpha u_1\cdot\nabla q_\alpha
 =-\int\Div(u_1\mathcal D_\alpha)q_\alpha,
 \label{eq:difference-transport-commutator}
\end{equation}
which is controlled by
\eqref{eq:difference-elliptic-commutator}.  The second term in
\eqref{eq:difference-electric-transport} is bounded after substituting
\eqref{eq:commuted-difference-poisson}; the Laplacian contribution is
integrated by parts once and is controlled by the weighted gradient energy.
Equations \eqref{eq:difference-electric-time}--
\eqref{eq:difference-transport-commutator}, together with
\eqref{eq:difference-mass-source} and
\eqref{eq:difference-commutator-time}, therefore show
\begin{equation}
 \sum_{\abs\alpha\le s-1}
 \left| -\int n_1\pa^\alpha\dot v\cdot\nabla q_\alpha
 +\frac12\frac{\dd}{\dd t}\int
 (Bq_\alpha^2+\eps^2\abs{\nabla q_\alpha}^2)\right|
 \le C\Gamma\cE_{s-1,\eps}^{\rm diff}.
 \label{eq:difference-electric-bound}
\end{equation}
In particular, no $\eps^{-1}$ factor has been introduced.

Finally commute \eqref{eq:difference-fluid-mass}--\eqref{eq:difference-fluid-momentum}, multiply them by
$a_1\pa^\alpha\dot r$ and $n_1\pa^\alpha\dot v$, and integrate.  The
principal acoustic terms cancel because $n_1a_1=p_i'(n_1)$.  Derivatives of
the weights are bounded by $C\Gamma$ using the equations for the first
solution.  Every source on the right of
\eqref{eq:difference-fluid-mass}--\eqref{eq:difference-fluid-momentum}
contains $\dot r$ or $\dot v$ times a profile derivative or a derivative of
the second solution, and hence contributes at most
$C\Gamma\cE_{s-1,\eps}^{\rm diff}$.  Adding
\eqref{eq:difference-electric-bound} proves \eqref{eq:difference}.  Gronwall
gives uniqueness and local Lipschitz dependence in $\cG^{s-1}_\eps$.
\end{proof}

\begin{proof}[Proof of \Cref{thm:main}]
Let $C_S$ be the norm of $H^s(\Om)\hookrightarrow L^\infty(\Om)$.  First
choose $\eta>0$ so that
\begin{equation}
 2\eta<\eta_P,\qquad 2C_S\eta\le\frac{n_*}{8},
 \label{eq:bootstrap-smallness}
\end{equation}
and so that \Cref{lem:energy-equivalence} holds throughout the $2\eta$
ball.  Reduce $\eps_0$ so that \Cref{prop:composite} gives
\begin{equation}
 \frac{3n_*}{4}\le n^a(t,x)\le\frac{5n^*}{4}
 \quad(t_0\le t\le T).
 \label{eq:profile-density-margin}
\end{equation}
The initial state-range hypothesis and the positivity in
\eqref{eq:fluid-weight-bounds} imply
\begin{equation}
 \norm{r_0}_{H^s}+\norm{v_0}_{H^s}\le C\eta_0.
 \label{eq:initial-fluid-smallness}
\end{equation}
The compatible-potential estimate then gives
\begin{equation}
 \norm{\psi_0}_{H^s}+\eps\norm{\nabla\psi_0}_{H^s}
 \le C(\eta_0+\eps^{2M+2}).
 \label{eq:initial-graph-smallness}
\end{equation}
Thus, after choosing $\eta_0$ and $\eps_0$ smaller if necessary, the data
lie strictly inside the local Poisson chart and the bootstrap set.

Let $C_*$ be the larger constant in the integrated energy estimate and in
the energy equivalence.  Choose $\eta_0$ and then reduce $\eps_0$ so that
\begin{equation}
  C_*\bigl(\eta_0^2+C_{M,t_0,T}\eps_0^{4M+4}\bigr)
  \le\frac14c_0\eta^2. \label{eq:small-choice}
\end{equation}

Let $[t_0,T_\eps^{\max})$ be the maximal perturbative interval from
\Cref{prop:local-theory}.  Define $\tau_\eps$ as the supremum of times
$\tau\le\min\{T,T_\eps^{\max}\}$ for which
\begin{equation}
 \sup_{t_0\le t<\tau}\mathfrak N_s(t)<2\eta,\qquad
 n(t,x)\in(n_*/4,2n^*),\qquad
 \norm{r(t)}_{H^s}<\eta_P.
 \label{eq:bootstrap-exit-time}
\end{equation}
The initial margins show $\tau_\eps>t_0$.  On
$[t_0,\tau_\eps)$, \Cref{prop:energy} and
\eqref{eq:residual-bound} give
\begin{align}
  \cE_{s,\eps}(t)
  &\le C_*\left(\cE_{s,\eps}(t_0)
  +\int_{t_0}^t(\mathfrak R_s^a(\sigma))^2\,\dd\sigma\right)\notag\\
  &\le C_*\bigl(\eta_0^2+
  C_{M,t_0,T}\eps^{4M+4}\bigr)
  \le\frac14c_0\eta^2.
 \label{eq:bootstrap-energy-improvement}
\end{align}
Energy equivalence now yields
\begin{equation}
 \sup_{t_0\le t<\tau_\eps}\mathfrak N_s(t)\le\frac{\eta}{2}.
 \label{eq:bootstrap-norm-improvement}
\end{equation}
In particular $\norm r_{H^s}<\eta_P/4$ after a harmless further reduction of
$\eta$.  By \eqref{eq:profile-density-margin}, Sobolev embedding, and
\eqref{eq:bootstrap-norm-improvement},
\begin{equation}
 n(t,x)\in
 \left[\frac{3n_*}{4}-\frac{C_S\eta}{2},
       \frac{5n^*}{4}+\frac{C_S\eta}{2}\right]
 \Subset(n_*/4,2n^*).
 \label{eq:bootstrap-density-improvement}
\end{equation}
Thus every inequality defining \eqref{eq:bootstrap-exit-time} is improved
with a strict margin independent of $\eps$.

If $\tau_\eps<T$ but $\tau_\eps<T_\eps^{\max}$, continuity would preserve
these strict inequalities past $\tau_\eps$, contradicting its definition.
If instead $\tau_\eps=T_\eps^{\max}<T$, then
\eqref{eq:bootstrap-norm-improvement} and
\eqref{eq:bootstrap-density-improvement} exclude every alternative in
\eqref{eq:continuation-criterion}; the solution restarts, again a
contradiction.  Hence $\tau_\eps=T$ for every
$0<\eps\le\eps_0$.  Returning to
\eqref{eq:bootstrap-energy-improvement} proves \eqref{eq:main-estimate}.
The difference estimate proves uniqueness and continuous dependence on the
whole interval.
\end{proof}

\subsection{Quasineutral expansion and vorticity}\label{sec:limit}

\begin{proof}[Proof of \Cref{cor:expansion}]
Combine \eqref{eq:main-estimate}, \Cref{lem:energy-equivalence}, and the
prepared-data assumption to obtain \eqref{eq:expansion-estimate}.  The
composite definition gives
\[
  \norm{n^a-\Nr}_{H^s}
  +\norm{u^a-(\Vr,0)}_{H^s}
  +\norm{\phi^a-\log\Nr}_{H^s}\le C\eps^2.
\]
The triangle inequality proves \eqref{eq:leading-rate}.
\end{proof}

\begin{corollary}[Identification of the Debye corrector]
\label{cor:first-corrector}
Assume the preparation in \Cref{cor:expansion} with $M\ge1$.  Then
\begin{equation}
\begin{split}
 \sup_{t_0\le t\le T}\biggl(&
 \norm{\eps^{-2}(n-\Nr)-N_1}_{H^s}
 +\norm{\eps^{-2}(u-(\Vr,0))-U_1}_{H^s}\\
 &+\norm{\eps^{-2}(\phi-\log\Nr)-\Phi_1}_{H^s}\biggr)
 \le C\eps^2. \label{eq:first-corrector-limit}
\end{split}
\end{equation}
Thus the forced system \eqref{eq:first-corrector-mass}--
\eqref{eq:first-corrector-momentum}, together with
\eqref{eq:first-phi}, is the actual first variation of the
Euler--Poisson solution around the quasineutral smooth simple wave.
\end{corollary}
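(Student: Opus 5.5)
The proof is a triangle-inequality argument built on \Cref{cor:expansion}, applied with $M\ge1$. The expansion estimate \eqref{eq:expansion-estimate} gives
\[
\norm{n-n^a}_{H^s}+\norm{u-u^a}_{H^s}+\norm{\phi-\phi^a}_{H^s}\le C\eps^{2M+2}.
\]
Dividing by $\eps^2$, the left side bounds $\eps^{-2}(n-n^a)$ and the analogous velocity and potential differences by $C\eps^{2M}\le C\eps^2$, since $M\ge1$ and $\eps\le1$.

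Next we decompose the composite profile \eqref{eq:corrector-ansatz}:
\[
\eps^{-2}(n^a-\Nr)-N_1=\sum_{j=2}^M\eps^{2j-2}N_j ,
\]
with identical identities for $u^a-(\Vr,0)-\eps^2U_1$ and for $\phi^a-\log\Nr-\eps^2\Phi_1$. The last identity uses $\Phi_0=\log\Nr$. Each $N_j,U_j,\Phi_j$ with $j\ge2$ lies in $C([t_0,T];H^{\ell_j})$, and $\ell_j\ge\ell_M=s+5>s$. By \Cref{prop:composite} (equivalently, finiteness of $\mathcal B_M$ in \eqref{eq:remainder-coefficient-bound}), their $H^s$ norms are bounded uniformly on $[t_0,T]$. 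The right-hand sum is therefore $O(\eps^2)$ in $H^s$, uniformly in time; when $M=1$ it is empty and vanishes. These are genuine $H^s$ norms, because every positive-order corrector decays at infinity and the base wave has been subtracted exactly, so the distinct end states cause no problem.

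Adding the two bounds by the triangle inequality proves \eqref{eq:first-corrector-limit}. The closing interpretive sentence follows because $(N_1,V_1,\Phi_1)$ solves \eqref{eq:first-phi}--\eqref{eq:first-corrector-momentum} by construction in \Cref{prop:composite}.

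I do not expect a real obstacle. The only point to check is that the preparation constant $C_0\eps^{4M+4}$ is used with the same $M$ as the composite. If only $\cE_{s,\eps}(t_0)\le C_0\eps^{8}$ were assumed, one would instead apply the result with $M=1$, which still yields an $O(\eps^2)$ error after division. Uniformity of all constants in $\eps$ is inherited from \Cref{thm:main} and \Cref{prop:composite}.
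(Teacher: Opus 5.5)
Your proposal is correct and is essentially the paper's own argument. You subtract the first two terms of the composite \eqref{eq:corrector-ansatz}, so that after division by $\eps^2$ the remaining correctors contribute $\sum_{j\ge2}\eps^{2j-2}(N_j,U_j,\Phi_j)=O(\eps^2)$ in $H^s$, while \eqref{eq:expansion-estimate} gives $\eps^{-2}\cdot O(\eps^{2M+2})=O(\eps^2)$ for $M\ge1$. Your closing remark about switching to the $M=1$ composite is harmless but unnecessary: if only $\cE_{s,\eps}(t_0)\le C_0\eps^8$ is assumed, keeping the order-$M$ composite and applying \eqref{eq:main-estimate} with energy equivalence already gives $\mathfrak N_s\le C\eps^4$.
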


\begin{proof}
Subtract the first two terms of \eqref{eq:corrector-ansatz}.  The remaining
profile terms start at order $\eps^4$, while
\eqref{eq:expansion-estimate} is $O(\eps^{2M+2})=O(\eps^4)$ for $M\ge1$.
Divide by $\eps^2$.
\end{proof}

\begin{corollary}[Every truncated Debye expansion]
\label{cor:truncated-expansion}
Assume the $M$th-order preparation in \Cref{cor:expansion}.  For
$0\le L\le M$, set
\[
 n^{[L]}=\sum_{j=0}^L\eps^{2j}N_j,\qquad
 u^{[L]}=\sum_{j=0}^L\eps^{2j}U_j,\qquad
 \phi^{[L]}=\sum_{j=0}^L\eps^{2j}\Phi_j.
\]
Then
\begin{equation}
 \sup_{t_0\le t\le T}\bigl(
 \norm{n-n^{[L]}}_{H^s}+\norm{u-u^{[L]}}_{H^s}
 +\norm{\phi-\phi^{[L]}}_{H^s}
 +\eps\norm{\nabla(\phi-\phi^{[L]})}_{H^s}\bigr)
 \le C_{L,M,t_0,T}\eps^{2L+2}.
 \label{eq:every-truncation}
\end{equation}
Thus the construction is an asymptotic expansion in the usual nested
sense, not only an estimate for the full $M$th-order sum.
\end{corollary}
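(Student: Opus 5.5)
The plan is a triangle inequality between the full solution, the $M$th-order composite $(n^a,u^a,\phi^a)$, and the truncation at level $L$. By \Cref{cor:expansion}, under the $M$th-order preparation we already know
\[
 \sup_{t_0\le t\le T}\bigl(\norm{n-n^a}_{H^s}+\norm{u-u^a}_{H^s}+\norm{\phi-\phi^a}_{\cG^s_\eps}\bigr)\le C\eps^{2M+2}\le C\eps^{2L+2},
\]
because $L\le M$ and $\eps\le\eps_0\le1$. What remains is to bound the tail $n^a-n^{[L]}=\sum_{j=L+1}^M\eps^{2j}N_j$, together with the analogous sums for $u$ and $\phi$, in $H^s$. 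The potential tail must also be bounded in the weighted graph norm $\norm{\cdot}_{H^s}+\eps\norm{\nabla\cdot}_{H^s}$.

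For the tail I would use the corrector regularity from the proof of \Cref{prop:composite}. There $Y_j=(N_j,V_j)\in C([t_0,T];H^{\ell_j})$ and $\Phi_j\in C([t_0,T];H^{\ell_j})$, with $\ell_j=s+3(M-j)+5\ge s+5$, and all of these are uniformly bounded on $[t_0,T]$. This is exactly the finiteness of $\mathcal B_M$ in \eqref{eq:remainder-coefficient-bound}. For $j\ge L+1\ge1$ these coefficients are genuine $H^s$ functions, since they vanish at infinity; there is no far-field obstruction, because the base wave (index $j=0$) cancels exactly between $n^a$ and $n^{[L]}$. It follows that
\[
 \norm{n^a-n^{[L]}}_{H^s}\le\sum_{j=L+1}^M\eps^{2j}\norm{N_j}_{H^s}\le \mathcal B_M\,\eps^{2L+2},
\]
and the same bound holds for $u$ (with $U_j=(V_j,0)$) and for $\phi$ in $H^s$. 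For the gradient term, $\eps\norm{\nabla\Phi_j}_{H^s}\le\norm{\Phi_j}_{H^{s+1}}\le\mathcal B_M$ because $\ell_j\ge s+1$. This gives the extra factor $\eps$ for free, so the graph norm of the tail is also $O(\eps^{2L+2})$.

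Adding the two estimates yields \eqref{eq:every-truncation} with $C_{L,M,t_0,T}$ depending on $\mathcal B_M$ and on the constant of \Cref{cor:expansion}. There is no real obstacle. The only point needing care is to justify that each norm is finite after subtracting the truncation: the difference $n-n^{[L]}=(n-n^a)+(n^a-n^{[L]})$ is a sum of two $H^s$ functions, so the far-field states match. The case $L=0$ reproduces \eqref{eq:leading-rate}, now including the weighted gradient term.
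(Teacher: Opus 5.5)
Your proposal is correct and is essentially the paper's own argument. You split $n-n^{[L]}=(n-n^a)+\sum_{j=L+1}^M\eps^{2j}N_j$, and likewise for $u$ and $\phi$. You bound the first piece by \eqref{eq:expansion-estimate} and the finite tail by the uniform corrector bounds from the construction in \Cref{prop:composite} together with $\eps\le1$, with the extra weight $\eps$ on the potential gradient only improving the order. Your citation of the $H^{\ell_j}$ bounds (i.e.\ finiteness of $\mathcal B_M$) from the proof of \Cref{prop:composite} is the precise source, since the stated $\cX_{s+3,T}$ seminorm bound alone does not control $\sup_t\norm{N_j(t)}_{H^s}$.
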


\begin{proof}
Write, for example,
\[
 n-n^{[L]}=(n-n^a)+\sum_{j=L+1}^M\eps^{2j}N_j.
\]
The first term is $O(\eps^{2M+2})$ by
\eqref{eq:expansion-estimate}.  Each fixed corrector in the finite sum is
uniformly bounded in the required Sobolev norm by
\Cref{prop:composite}, and $0<\eps\le1$ gives
$\sum_{j=L+1}^M\eps^{2j}\le C_M\eps^{2L+2}$.  The same argument applies to
the velocity and potential.  After one spatial derivative of the potential
tail, the additional energy weight $\eps$ only improves the order.  This
proves \eqref{eq:every-truncation}.
\end{proof}

The theorem permits rotational perturbations.  Indeed, let
\[
  \omega=\pa_1u^2-\pa_2u^1,
  \qquad \zeta=\frac{\omega}{n}.
\]

\begin{proposition}[Specific vorticity]\label{prop:vorticity}
Every classical solution of \eqref{eq:EP-mass}--\eqref{eq:EP-poisson}
satisfies
\begin{equation}
  (\pa_t+u\cdot\nabla)\zeta=0. \label{eq:specific-vorticity}
\end{equation}
Consequently,
\begin{equation}
  \sup_{t_0\le t\le T}\norm{\zeta(t)}_{H^{s-1}}
  \le C_{t_0,T}\norm{\zeta(t_0)}_{H^{s-1}}. \label{eq:vorticity-bound}
\end{equation}
The constant is independent of $\eps$.
\end{proposition}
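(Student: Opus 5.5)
The identity \eqref{eq:specific-vorticity} is the standard Ertel-type computation in two dimensions. The momentum equation \eqref{eq:EP-momentum} has the form $\pa_tu+u\cdot\nabla u=-\nabla(h_i(n)+\phi)$, so the entire force is a gradient. Since the curl of a gradient vanishes, this is the only place where the electric field enters, and it drops out irrespective of $\eps$.

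Apply $\pa_1(\cdot)^2-\pa_2(\cdot)^1$ to the momentum equation. In two dimensions one has $\operatorname{curl}(u\cdot\nabla u)=u\cdot\nabla\omega+\omega\Div u$, which gives $(\pa_t+u\cdot\nabla)\omega+\omega\Div u=0$. The continuity equation \eqref{eq:EP-mass} gives $(\pa_t+u\cdot\nabla)n=-n\Div u$. Hence
\[
(\pa_t+u\cdot\nabla)(\omega/n)=\frac{(\pa_t+u\cdot\nabla)\omega}{n}-\frac{\omega}{n^2}(\pa_t+u\cdot\nabla)n=\frac{-\omega\Div u+\omega\Div u}{n}=0.
\]
The regularity of a classical perturbative solution with $s\ge5$ makes every step pointwise legitimate. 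Note that $\omega=\pa_1v^2-\pa_2v^1$ exactly, because $u^a=(U^a,0)$ is planar and depends only on $(t,x_1)$, so $\omega^a=\pa_1 0-\pa_2 U^a=0$. Thus $\zeta\in H^{s-1}$ with no far-field subtraction needed.

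For \eqref{eq:vorticity-bound}, run a standard transport energy estimate at order $s-1$. Apply $\pa^\alpha$, $\abs\alpha\le s-1$, to get
\[
(\pa_t+u\cdot\nabla)\pa^\alpha\zeta=-[\pa^\alpha,u]\cdot\nabla\zeta,
\]
pair with $\pa^\alpha\zeta$, and integrate by parts. This yields
\[
\frac{\dd}{\dd t}\norm{\zeta}_{H^{s-1}}^2\le C\bigl(\norm{\nabla u}_{L^\infty}\norm\zeta_{H^{s-1}}+\norm{\nabla u}_{H^{s-2}}\norm{\nabla\zeta}_{L^\infty}\bigr)\norm\zeta_{H^{s-1}}+\norm{\Div u}_\infty\norm\zeta_{H^{s-1}}^2,
\]
using the commutator bound \eqref{eq:transport-commutator-full} with $s$ replaced by $s-1$. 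The key point is that only $\nabla u=\nabla u^a+\nabla v$ appears, never the profile $u^a$ itself in $L^2$. Here $\norm{\nabla u^a}_{W^{s,\infty}}\le K_a$, and $\nabla u^a$ is also in $H^{s}$ on $[t_0,T]$ by \eqref{eq:profile-bound} and \Cref{prop:smooth-simple-wave}. The remaining piece satisfies $\norm{\nabla v}_{H^{s-1}}\le\mathfrak N_s\le 2\eta$ by \Cref{thm:main}. Since $s-1\ge4$, $\norm{\nabla\zeta}_\infty\le C\norm\zeta_{H^{s-1}}$. Gronwall with $\int_{t_0}^T(K_a+\norm{\nabla u^a}_{H^{s-1}}+\mathfrak N_s)\,\dd t\le C_{t_0,T}$ closes the bound.

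The only genuine obstacle is $\eps$-uniformity of the coefficient $u$, which is already supplied by \Cref{thm:main}. No electric quantity, and in particular no unweighted $\nabla\psi$, enters this estimate. So I expect no difficulty beyond checking that the $H^{s-2}$ norm of $\nabla u^a$ is finite, which holds because only derivatives of the planar profile are involved.
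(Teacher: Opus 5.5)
Your proposal is correct and follows the paper's proof: the curl removes both gradient forces $\nabla h_i(n)$ and $\nabla\phi$, and the continuity equation turns the $\omega$-equation into pure transport of $\zeta=\omega/n$; a commuted transport estimate with Gronwall then closes using the $\eps$-uniform bounds of \Cref{thm:main}. Your tame commutator bound is in fact slightly more careful than the paper's appeal to $\nabla u\in L^1([t_0,T];W^{s-2,\infty})$. It places $\norm{\nabla u}_{H^{s-2}}\norm{\nabla\zeta}_{L^\infty}$ in the estimate instead of a top-order $L^\infty$ norm of $u$, and $v\in H^s$ alone does not give $\nabla v\in W^{s-2,\infty}$ in two dimensions.
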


\begin{proof}
Take the scalar curl of \eqref{eq:EP-momentum}.  Both
$\nabla h_i(n)$ and $\nabla\phi$ are gradients, so
\[
  (\pa_t+u\cdot\nabla)\omega+\omega\Div u=0.
\]
Combine this with
$(\pa_t+u\cdot\nabla)n=-n\Div u$ to obtain
\eqref{eq:specific-vorticity}.  Commuting spatial derivatives through the
transport equation and applying the usual transport estimate gives
\eqref{eq:vorticity-bound}, since $\nabla u\in L^1([t_0,T];W^{s-2,\infty})$
by \Cref{thm:main}.  The electric field does not enter the estimate.
\end{proof}

\section{Conclusion and Outlook}\label{sec:conclusion}

We have established finite-time, Debye-uniform nonlinear stability of a
smooth expanding Euler--Poisson--Boltzmann simple wave under genuinely
two-dimensional perturbations, together with an arbitrary finite-order
quasineutral expansion. The decisive estimate couples the warm-ion
symmetrizer to the time-differentiated screened constraint, successfully closing the
top electric work without an $\eps^{-1}$ loss. This mechanism is fully compatible
with distinct neutral end states and transported vorticity.

The reference constructed in \Cref{prop:smooth-simple-wave} constitutes a smooth exact
solution of the effective Euler system. Because the Burgers datum is smooth and
monotone, all coefficient and corrector norms required by the energy method are
finite on $[t_0,T]$ for every $t_0\ge0$. Consequently, the theorem yields a
genuine initial-time stability result for the smooth expanding wave. While it
approximates the centered rarefaction at the quantitative rate \eqref{eq:centered-comparison},
the distinct geometric challenge of the Riemann vertex requires a separate treatment.

Specifically, for a centered self-similar fan $U^c(x_1/t)$,
\[
  \pa_1^2\log N^c(t,x_1)=t^{-2}(\log N^c)''(x_1/t).
\]
This indicates that the first outer Poisson correction for the centered fan is of size
$(\eps/t)^2$, meaning the expansion ceases to be uniform in the critical region
$t\lesssim\eps$. Extending the present stability framework to a nonsmooth reference
reaching its vertex will therefore necessitate constructing an inner Euler--Poisson vertex profile
and matching it to the outer Debye hierarchy developed here.

Similarly, extending the present framework to a global-in-time theorem requires
transitioning from the integrable coefficient bounds used on the finite interval
\eqref{eq:differential-energy} to a signed and weighted rarefaction estimate.
The compensated energy topology established in this paper provides the foundational
singular-perturbation machinery required to prove that the highly oscillatory electric
commutators can be integrated into such a global weighted hierarchy.

\appendix

\section{Appendix}\label{sec:appendix}

\subsection{Tame calculus on the cylinder}\label{app:moser}

We collect the precise variants of the product estimates used in the
profile, energy, and difference arguments.  They are stated in a form that
allows a coefficient to have different limits as $x_1\to\pm\infty$.

\begin{lemma}[Products and commutators]\label{lem:appendix-tame}
Let $m\ge3$, let $g\in H^m(\Om)$, and suppose
$f\in L^\infty(\Om)$ with $\nabla f\in H^{m-1}(\Om)$.  Then
\begin{align}
 \norm{fg}_{H^m}
 &\le C_m\left(\norm f_\infty\norm g_{H^m}
 +\norm{\nabla f}_{H^{m-1}}\norm g_\infty\right),
 \label{eq:relative-product}\\
 \sum_{1\le\abs\alpha\le m}
 \norm{[\pa^\alpha,f]g}_2
 &\le C_m\left(\norm{\nabla f}_\infty\norm g_{H^{m-1}}
 +\norm{\nabla f}_{H^{m-1}}\norm g_\infty\right),
 \label{eq:relative-commutator}\\
 \sum_{1\le\abs\alpha\le m}
 \norm{[\pa^\alpha,f]\nabla g}_2
 &\le C_m\left(\norm{\nabla f}_\infty\norm g_{H^m}
 +\norm{\nabla f}_{H^{m-1}}\norm{\nabla g}_\infty\right).
 \label{eq:transport-commutator-appendix}
\end{align}
If both $f$ and $g$ belong to $H^m$, the first estimate reduces to
\begin{equation}
 \norm{fg}_{H^m}\le C_m\left(
 \norm f_\infty\norm g_{H^m}+\norm g_\infty\norm f_{H^m}\right).
 \label{eq:moser-product}
\end{equation}
\end{lemma}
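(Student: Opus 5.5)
The plan is to reduce everything to the classical Moser/Kato--Ponce calculus on $\R^2$, transferred to the cylinder. The only new feature is that $f$ need not lie in $L^2$: its limits as $x_1\to\pm\infty$ may differ. Because of this, every factor carrying derivatives of $f$ will be measured through $\nabla f\in H^{m-1}$, and only undifferentiated $f$ will be placed in $L^\infty$.

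\textbf{Transfer to the cylinder.} Functions on $\Om=\R\times\T$ are identified with functions on $\R^2$ that are $1$-periodic in $x_2$. The Gagliardo--Nirenberg inequalities used below hold on $\Om$ with constants depending only on $m$. One way to see this is to apply them on $\R\times(-1,2)$ with an extension argument. Alternatively, one can prove them directly from the Fourier series in $x_2$ and the Fourier transform in $x_1$. Since $m\ge3>1=d/2$, we have $H^{m-1}\hookrightarrow L^\infty$ as well.

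\textbf{Product estimate \eqref{eq:relative-product}.} For $\abs\alpha\le m$, I expand $\pa^\alpha(fg)$ by Leibniz.
\begin{itemize}
\item The term with no derivative on $f$ is bounded by $\norm f_\infty\norm{\pa^\alpha g}_2$.
\item For $0<\beta\le\alpha$, I write $\pa^\beta f=\pa^{\beta-e_i}\pa_if$ with $\abs{\beta-e_i}=k-1$ and $\abs{\alpha-\beta}=\ell$, where $k+\ell\le m$.
\end{itemize}
For the second type, I use the Gagliardo--Nirenberg bound
\[
\norm{\pa^{k-1}F\,\pa^\ell g}_2\le C\norm F_\infty^{1-\theta}\norm F_{\dot H^{m-1}}^{\theta}\norm g_\infty^{1-\theta'}\norm g_{\dot H^{m}}^{\theta'}
\]
with $F=\nabla f$. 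Hölder with exponents $2(m-1)/(k-1)$ and $2m/\ell$, combined with interpolation, makes the exponents match. This yields $\norm{\nabla f}_\infty\norm g_{H^{m-1}}+\norm{\nabla f}_{H^{m-1}}\norm g_\infty$. Since $\norm{\nabla f}_\infty\le C\norm{\nabla f}_{H^{m-1}}$ and $\norm g_{H^{m-1}}\le\norm g_{H^m}$, that bound is controlled by the right side of \eqref{eq:relative-product}. If the exponent bookkeeping becomes awkward, a cruder route suffices, since $m\ge3$ in two dimensions. The factor with more derivatives goes in $L^2$. The other goes in $L^\infty$ via $H^{m-1}\hookrightarrow L^\infty$ when it carries at most $m-2$ derivatives. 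When it carries $m-1$ derivatives, both factors instead go in $L^4$ using $H^1\hookrightarrow L^4$.

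\textbf{Commutator estimates.} The commutator estimate \eqref{eq:relative-commutator} is the same Leibniz expansion with the $\beta=0$ term removed. Every remaining term has at least one derivative on $f$, so I write it as a derivative of $F=\nabla f$ of order $k-1$ times $\pa^\ell g$, where $(k-1)+\ell\le m-1$. The standard Kato--Ponce commutator argument, i.e. the product estimate above applied at level $m-1$ to the pair $(\nabla f,g)$, then gives $\norm{\nabla f}_\infty\norm g_{H^{m-1}}+\norm{\nabla f}_{H^{m-1}}\norm g_\infty$.

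The transport commutator \eqref{eq:transport-commutator-appendix} is \eqref{eq:relative-commutator} applied with $g$ replaced by $\nabla g$. This uses $\norm{\nabla g}_{H^{m-1}}\le\norm g_{H^m}$. Finally, \eqref{eq:moser-product} follows from \eqref{eq:relative-product} together with $\norm{\nabla f}_{H^{m-1}}\le\norm f_{H^m}$.

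\textbf{Main obstacle.} I expect the only delicate point to be that $f$ must never be placed in $L^2$. This is handled by always splitting off one derivative onto $f$ before interpolating, so that the interpolation runs between $\norm{\nabla f}_\infty$ and $\norm{\nabla f}_{\dot H^{m-1}}$. The Gagliardo--Nirenberg inequality used in this step holds for $F\in L^\infty\cap\dot H^{m-1}$ without requiring $F\in L^2$.
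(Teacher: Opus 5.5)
The gap is in \eqref{eq:relative-product}, and hence in \eqref{eq:moser-product}.

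By peeling one derivative onto $f$, you are in effect writing $\pa^\alpha(fg)=f\pa^\alpha g+[\pa^\alpha,f]g$. You then bound the second piece by the commutator quantity $\norm{\nabla f}_\infty\norm g_{H^{m-1}}+\norm{\nabla f}_{H^{m-1}}\norm g_\infty$. Your claim that this is controlled by the right side of \eqref{eq:relative-product} is false. The inequalities $\norm{\nabla f}_\infty\le C\norm{\nabla f}_{H^{m-1}}$ and $\norm g_{H^{m-1}}\le\norm g_{H^m}$ only give $C\norm{\nabla f}_{H^{m-1}}\norm g_{H^m}$. That is a product of two top-order norms, and it does not appear on that right side.

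Nor is the quantity itself controlled by that right side. Take $f=\delta\theta(x_1/\delta)$ and $g=\chi(x_1/R)$ with fixed nonzero $\theta,\chi\in C_c^\infty(\R)$, $\delta\to0$ and $R=\delta^{-2m}$. Then $\norm{\nabla f}_\infty\norm g_{H^{m-1}}\sim R^{1/2}=\delta^{-m}$. By contrast, $\norm f_\infty\norm g_{H^m}+\norm{\nabla f}_{H^{m-1}}\norm g_\infty=O(\delta R^{1/2}+\delta^{3/2-m})=O(\delta^{1-m})$.

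Your ``cruder route'' has the same defect. Putting $\pa_if$ in $L^\infty$ by Sobolev embedding bounds $\pa_if\,\pa^{\alpha-e_i}g$ only by $\norm{\nabla f}_{H^{m-1}}\norm g_{H^m}$. That route is essentially the splitting used in the paper's appendix. As written, that splitting likewise yields only $\norm{fg}_{H^m}\le C(\norm f_\infty+\norm{\nabla f}_{H^{m-1}})\norm g_{H^m}$, so it cannot close the gap.

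Your displayed Gagliardo--Nirenberg bound is also unusable as written. For $k+\ell=m$ with $k<m$, the reciprocals of your Hölder exponents $2(m-1)/(k-1)$ and $2m/\ell$ sum to $\frac12-\frac{m-k}{2m(m-1)}<\frac12$. Hölder therefore lands in some $L^q$ with $q>2$, which does not control the $L^2$ norm on the cylinder.

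The missing idea is to interpolate $f$ itself, not $\nabla f$. The worry in your last paragraph is unfounded. For $1\le k\le j$, the inequality $\norm{D^kf}_{L^{2j/k}}\le C\norm f_\infty^{1-k/j}\norm{D^jf}_2^{k/j}$ involves only $\norm f_\infty$ and $\norm{D^jf}_2$, never $\norm f_2$. Take $\abs\alpha=j\le m$ and $0<\beta\le\alpha$ with $\abs\beta=k$. Hölder with exponents $2j/k$ and $2j/(j-k)$, together with the same inequality for $g$, gives
\[
 \norm{\pa^\beta f\,\pa^{\alpha-\beta}g}_2
 \le C\bigl(\norm f_\infty\norm{D^jg}_2\bigr)^{1-k/j}
 \bigl(\norm{D^jf}_2\norm g_\infty\bigr)^{k/j}
 \le C\bigl(\norm f_\infty\norm g_{H^m}
 +\norm{\nabla f}_{H^{m-1}}\norm g_\infty\bigr).
\]
Here $j\ge1$ gives $\norm{D^jf}_2\le\norm{\nabla f}_{H^{m-1}}$, and the case $k=j$ is read in the obvious way. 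It is splitting off a derivative that creates the bad pairing $\norm{\nabla f}_\infty\norm g_{H^{m-1}}$.

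On $\R\times\T$, the interpolation inequality can be obtained, for example, on unit cells in $x_1$: apply it to $f$ minus its cell average and use Poincaré. This merely replaces $\norm{D^jf}_2$ by $\norm{\nabla f}_{H^{j-1}}$, which is harmless.

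Your commutator argument is correct once it is read as this same bilinear interpolation for the pair $(\nabla f,g)$. The level is $j=(k-1)+\ell\le m-1$, with both endpoints at $\dot H^j$. Then \eqref{eq:transport-commutator-appendix} follows from \eqref{eq:relative-commutator} as you say. Likewise, \eqref{eq:moser-product} follows from the repaired \eqref{eq:relative-product}.
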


\begin{proof}
For $\abs\alpha\le m$, Leibniz' rule reads
\begin{equation}
 \pa^\alpha(fg)=f\pa^\alpha g+
 \sum_{0<\beta\le\alpha}\binom{\alpha}{\beta}
 \pa^\beta f\,\pa^{\alpha-\beta}g.
 \label{eq:appendix-Leibniz}
\end{equation}
The first term is bounded by $\norm f_\infty\norm g_{H^m}$.
If $1\le\abs\beta\le m-2$, then
$\pa^\beta f\in H^{m-\abs\beta}$ and the latter exponent is at least two;
the two-dimensional embedding $H^2\hookrightarrow L^\infty$ puts this
factor in $L^\infty$ and the other one in $L^2$.  If
$\abs\beta\ge m-1$, then $\abs{\alpha-\beta}\le1$; put the derivative of
$g$ in $L^\infty$ using $H^m\hookrightarrow W^{1,\infty}$ and the
derivative of $f$ in $L^2$.  Summing the finitely many splittings proves
\eqref{eq:relative-product}.

In $[\pa^\alpha,f]g$, the term with $\beta=0$ in
\eqref{eq:appendix-Leibniz} is absent.  The preceding low--high division
therefore gives \eqref{eq:relative-commutator}, with only derivatives of
$f$ on the right.  Replacing $g$ by $\nabla g$ gives
\eqref{eq:transport-commutator-appendix}; the apparent derivative of order
$m+1$ cancels because the $\beta=0$ term is again absent.  The estimate
\eqref{eq:moser-product} follows by using
$\norm{\nabla f}_{H^{m-1}}\le\norm f_{H^m}$ and its symmetric counterpart.
\end{proof}

\begin{lemma}[Composition and differences]\label{lem:appendix-composition}
Let $I\Subset\R$ be compact and $F\in C^{m+1}(I)$.
If $f$ takes values in $I$, $f\in L^\infty$, and
$\nabla f\in H^{m-1}$, then
\begin{equation}
 \norm{\nabla F(f)}_{H^{m-1}}
 \le C_{F,I,m}\left(1+\norm{\nabla f}_{H^{m-1}}^m\right).
 \label{eq:relative-composition}
\end{equation}
If $f_1,f_2$ take values in $I$, $f_1-f_2\in H^m$, and their relative
coefficient norms are bounded by $R$, then
\begin{equation}
 \norm{F(f_1)-F(f_2)}_{H^m}
 \le C_{F,I,m,R}\norm{f_1-f_2}_{H^m}.
 \label{eq:composition-difference}
\end{equation}
For a function $f\in H^m$ and a normalization $F(0)=0$, this also gives
\begin{equation}
 \norm{F(f)}_{H^m}
 \le C_{F,\norm f_\infty,m}
 (1+\norm f_{H^m}^{m-1})\norm f_{H^m}.
 \label{eq:moser-composition}
\end{equation}
\end{lemma}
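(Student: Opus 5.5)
The plan is to prove the three estimates of \Cref{lem:appendix-composition} by the Fa\`a di Bruno formula for spatial derivatives, combined with the relative product calculus of \Cref{lem:appendix-tame}. Throughout, only derivatives of $f$ are placed in $L^2$, while $F^{(k)}(f)$ is placed in $L^\infty$; this is legitimate because $f$ takes values in the compact set $I$ and $F\in C^{m+1}(I)$.

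For \eqref{eq:relative-composition}, take $1\le\abs\alpha\le m$ and write
\[
\pa^\alpha F(f)=\sum_{k=1}^{\abs\alpha}F^{(k)}(f)\sum c\,\pa^{\gamma_1}f\cdots\pa^{\gamma_k}f,
\qquad \abs{\gamma_i}\ge1,\quad \textstyle\sum\abs{\gamma_i}=\abs\alpha .
\]
Each factor $\pa^{\gamma_i}f$ is a derivative of order $\abs{\gamma_i}-1$ of $\nabla f\in H^{m-1}$. In two dimensions, $H^{m-1}\hookrightarrow W^{m-3,\infty}$, so at most one factor in each product can carry more than $m-2$ derivatives. That factor goes in $L^2$ and the rest in $L^\infty$, using $\norm{\pa^{\gamma}f}_\infty\le C\norm{\nabla f}_{H^{m-1}}$ when $\abs\gamma\le m-2$.

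The one case to watch is $m=3$, $\abs{\gamma}=2$, where $\nabla f\in H^2$ gives $\pa^\gamma f\in H^1$, which is not in $L^\infty$. Such a product, for example $\pa^{\gamma_1}f\,\pa^{\gamma_2}f$ with $\abs{\gamma_1}=2$ and $\abs{\gamma_2}=1$, is handled by placing $\nabla f\in H^2\subset L^\infty$ and $\pa^{\gamma_1}f\in L^2$. More generally, in any product the smaller factors have order at most $\lfloor\abs\alpha/2\rfloor\le m-2$ when $m\ge3$, apart from a single large factor, so the allocation always works. Each product has at most $k\le m$ factors, hence is bounded by $C(1+\norm{\nabla f}_{H^{m-1}}^m)$, where the $1$ absorbs lower powers. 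Summing over $\alpha$ gives \eqref{eq:relative-composition}.

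For \eqref{eq:composition-difference}, write
\[
F(f_1)-F(f_2)=G\,(f_1-f_2),\qquad G=\int_0^1F'(\theta f_1+(1-\theta)f_2)\,\dd\theta .
\]
Here I use that $I$ is an interval, so convex combinations of $f_1,f_2$ stay in $I$; if $I$ were not an interval one would replace it by its convex hull. Since $F'\in C^m(I)$, the estimate \eqref{eq:relative-composition} applied to $F'$ (with $m-1$ derivatives in the bound), uniformly in $\theta$, gives $\norm{G}_\infty+\norm{\nabla G}_{H^{m-1}}\le C_R$. The product estimate \eqref{eq:relative-product}, with $f=G$ and $g=f_1-f_2$, together with $\norm{g}_\infty\le C\norm g_{H^m}$, yields the Lipschitz bound. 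The main point to check is the regularity count: \eqref{eq:relative-composition} for $F'$ needs $F'\in C^{m}$, which is exactly what $F\in C^{m+1}$ supplies.

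For \eqref{eq:moser-composition}, use $F(0)=0$ to write $F(f)=\tilde G f$ with $\tilde G=\int_0^1F'(\theta f)\,\dd\theta$, taking $I$ to contain $0$ and the range of $\theta f$. Apply \eqref{eq:relative-product}. The Fa\`a di Bruno count above, in the sharper form that keeps the power $k-1$ at level $k$ (the $L^2$ factor is linear), gives $\norm{\nabla\tilde G}_{H^{m-1}}\le C(1+\norm f_{H^m}^{m-1})\norm{f}_{H^m}$. Combined with $\norm f_\infty\le C\norm f_{H^m}$, this is actually stronger than needed. A simpler route is to apply Fa\`a di Bruno directly to $\pa^\alpha F(f)$ for $\abs\alpha\ge1$, which gives products of $k\le m$ derivatives of $f$ bounded by $C\norm f_{H^m}^k$, and to use $\abs{F(f)}\le C\abs f$ for the $L^2$ part. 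This gives $C(1+\norm f_{H^m}^{m-1})\norm f_{H^m}$. I expect no step to be a real obstacle; only the borderline $m=3$ allocation requires care.
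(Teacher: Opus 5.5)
Your proposal is correct and follows essentially the same route as the paper: Fa\`a di Bruno with the highest-order factor in $L^2$ and the rest in $L^\infty$ for \eqref{eq:relative-composition}, the mean-value identity combined with \eqref{eq:relative-product} for \eqref{eq:composition-difference}, and the specialization $f_2=0$ for \eqref{eq:moser-composition}. You also make explicit two points the paper leaves implicit: that $I$ must be convex, and that $F'\in C^m$ suffices.

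One small correction concerns the $\tilde G f$ route. Keep the factor $\norm f_\infty$ in the constant rather than bounding it by $\norm f_{H^m}$, since that bound would raise the top power to $m+1$. Your direct Fa\`a di Bruno route avoids this issue altogether.
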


\begin{proof}
Every differentiated composition is a finite sum of terms
\begin{equation}
 F^{(\ell)}(f)
 \pa^{\beta_1}f\cdots\pa^{\beta_\ell}f,
 \qquad
 \abs{\beta_j}\ge1,\qquad
 \beta_1+\cdots+\beta_\ell=\alpha.
 \label{eq:Faa-di-Bruno}
\end{equation}
The smooth coefficient is bounded in $L^\infty$ on $I$.  Put a factor with
the largest derivative count in $L^2$ and estimate the remaining factors
successively by the product argument in
\eqref{eq:appendix-Leibniz}; in dimension two the two-derivative reserve is
sufficient.  This proves \eqref{eq:relative-composition}.

For the difference estimate, use the exact identity
\begin{equation}
 F(f_1)-F(f_2)=(f_1-f_2)
 \int_0^1F'\bigl(f_2+\theta(f_1-f_2)\bigr)\,\dd\theta.
 \label{eq:composition-mean-value}
\end{equation}
The integral coefficient has bounded $L^\infty$ norm and bounded relative
derivatives through order $m$, by
\eqref{eq:relative-composition}.  Applying
\eqref{eq:relative-product} proves
\eqref{eq:composition-difference}.  Taking $f_2=0$ gives
\eqref{eq:moser-composition}.
\end{proof}

\subsection{Fourier cutoffs and passage to the limit}\label{app:cutoffs}

\begin{lemma}[Cutoff estimates]\label{lem:appendix-cutoff}
Let $J_\nu$ be the cutoff used in
\eqref{eq:Friedrichs-mass}--\eqref{eq:Friedrichs-momentum}.  If
$0\le r\le q$, then
\begin{equation}
 \norm{(I-J_\nu)f}_{H^r}
 \le C\nu^{r-q}\norm f_{H^q},\qquad
 \norm{J_\nu f}_{H^q}\le C\norm f_{H^q}.
 \label{eq:cutoff-quantitative}
\end{equation}
In particular, for a family bounded in $H^s$,
\[
 \norm{(J_\nu-J_\mu)f}_{H^{s-1}}
 \le C(\nu^{-1}+\mu^{-1})\norm f_{H^s}.
\]
For a family bounded only in $H^{s-1}$, the estimate actually used on the
nonlinear hyperbolic right-hand side is
\begin{equation}
 \norm{(J_\nu-J_\mu)f}_{H^{s-2}}
 \le C(\nu^{-1}+\mu^{-1})\norm f_{H^{s-1}}.
 \label{eq:cutoff-right-hand-side}
\end{equation}
Moreover, for a coordinate derivative $\partial_k$ and an integer $m\ge0$,
\begin{equation}
 \norm{[J_\nu,a]\partial_k f}_{H^m}
 \le C_m\left(\norm{\nabla a}_{L^\infty}\norm f_{H^m}
 +\norm{\nabla a}_{H^m}\norm f_{L^\infty}\right),
 \label{eq:cutoff-commutator}
\end{equation}
with a constant independent of $\nu$ whenever the right-hand side is
finite.
\end{lemma}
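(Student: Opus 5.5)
All assertions of \Cref{lem:appendix-cutoff} come from working on the Fourier side of $\Om=\R\times\T$. There the frequency variable is $\xi=(\xi_1,k)\in\R\times\mathbb Z$ and $\norm f_{H^q}^2\simeq\int\sum_k\langle\xi\rangle^{2q}\abs{\hat f(\xi)}^2$. The symbol of $J_\nu$ is $\chi(\xi/\nu)$ with $0\le\chi\le C$, and $\chi\equiv1$ on $\{\abs\xi\le c_0\}$.

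\emph{Step 1: the two multiplier bounds.} The estimate $\norm{J_\nu f}_{H^q}\le C\norm f_{H^q}$ is immediate from $\abs{\chi}\le C$. For the first bound in \eqref{eq:cutoff-quantitative}, note that $1-\chi(\xi/\nu)$ vanishes for $\abs\xi\le c_0\nu$. On its support, $\langle\xi\rangle^{r-q}\le (c_0\nu)^{r-q}$ since $r\le q$. Multiplying and dividing by $\langle\xi\rangle^{q}$ then gives
\[
\norm{(I-J_\nu)f}_{H^r}\le C\nu^{r-q}\norm f_{H^q}.
\]
The difference bounds follow by writing $J_\nu-J_\mu=(J_\nu-I)-(J_\mu-I)$. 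Applying the previous estimate with $(r,q)=(s-1,s)$ and $(r,q)=(s-2,s-1)$ yields \eqref{eq:cutoff-right-hand-side}, and this is exactly what Step 3 of \Cref{prop:local-theory} uses.

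\emph{Step 2: the commutator estimate \eqref{eq:cutoff-commutator}.} This is the main point. I would use the physical-space kernel representation $J_\nu g=K_\nu*g$. Here $K_\nu(y)=\nu^2K(\nu y)$ on $\R^2$, periodized in $x_2$; the periodization poses no difficulty because $\chi$ is Schwartz and the kernel sums converge. Write
\[
[J_\nu,a]\pa_kf(x)=\int K_\nu(x-y)\bigl(a(y)-a(x)\bigr)\pa_kf(y)\,\dd y .
\]
Integrating by parts in $y$ moves $\pa_k$ onto $K_\nu(x-y)(a(y)-a(x))$. This produces two terms:
\begin{itemize}
\item a term $\int K_\nu(x-y)\,\pa_ka(y)f(y)\,\dd y$, bounded in $L^2$ by $\norm{\nabla a}_\infty\norm f_2$;
\item a term $\int(\pa_kK_\nu)(x-y)(a(y)-a(x))f(y)\,\dd y$.
\end{itemize}
For the second term, the mean value theorem gives $\abs{a(y)-a(x)}\le\norm{\nabla a}_\infty\abs{x-y}$. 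Then $\int\abs{z}\abs{\nabla K_\nu(z)}\,\dd z=\int\abs z\abs{\nabla K(z)}\,\dd z$ is independent of $\nu$, so Young's convolution inequality gives the $m=0$ case with a uniform constant.

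For $m\ge1$, apply $\pa^\gamma$ with $\abs\gamma\le m$. Since $J_\nu$ commutes with derivatives, Leibniz gives
\[
\pa^\gamma[J_\nu,a]\pa_kf=\sum_{\beta\le\gamma}\binom\gamma\beta[J_\nu,\pa^\beta a]\pa_k\pa^{\gamma-\beta}f .
\]
\begin{itemize}
\item For $\beta=0$, the term is handled by the $m=0$ bound applied to $\pa^\gamma f$.
\item For $\beta\ne0$, do not use the commutator structure. Bound $J_\nu(\pa^\beta a\,\pa_k\pa^{\gamma-\beta}f)$ and $\pa^\beta a\,\pa_k\pa^{\gamma-\beta}f$ separately, using the uniform $L^2$ boundedness of $J_\nu$. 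This reduces to the tame product estimates of \Cref{lem:appendix-tame}, placing $\nabla a$ in $H^m$ or $L^\infty$ by the same low–high split.
\end{itemize}

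\emph{Expected obstacle.} The delicate case is $\beta=\gamma$ with $\abs\gamma=m$: a naive count puts $m+1$ derivatives on $a$ while $\pa_kf$ is only assumed in $L^\infty$ through $f$. I would treat it with the first integration-by-parts trick again: move $\pa_k$ off $f$ inside $[J_\nu,\pa^\gamma a]\pa_kf$. The resulting terms carry $\pa^\gamma a\in H^{m-1}$-level objects times $\pa_kK_\nu$ against differences. Alternatively, I would accept $\norm{\nabla a}_{H^m}\norm f_{L^\infty}$ as stated on the right of \eqref{eq:cutoff-commutator}: after expanding $[J_\nu,\pa^\gamma a]\pa_kf=\pa_k[J_\nu,\pa^\gamma a]f-[J_\nu,\pa_k\pa^\gamma a]f$, the derivatives on $a$ total at most $m+1$, i.e. $\nabla a\in H^m$. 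The only remaining piece is $\pa_k[J_\nu,\pa^\gamma a]f$, a first derivative of a commutator with $\pa^\gamma a$, where $\abs\gamma=m$. I expect its uniform-in-$\nu$ control from the kernel moment bound above, applied with $\pa^\gamma a$ in place of $a$, to be the fiddliest bookkeeping step.
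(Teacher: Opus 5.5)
Your route is the paper's own. You use the Fourier-side bound on the support of $1-\chi(\xi/\nu)$ for \eqref{eq:cutoff-quantitative} and the difference estimates, and the kernel representation with the first-moment cancellation for the order-zero commutator. You then apply Leibniz' rule, handle $\beta=0$ by the order-zero bound, and estimate the $\beta\neq0$ terms piece by piece with product bounds. What needs correcting is your ``expected obstacle'' paragraph.

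The term $[J_\nu,\pa^\gamma a]\pa_kf$ with $\abs\gamma=m$ carries $m$ derivatives on $a$ (so $m-1$ on $\nabla a$) and one on $f$. That is the same total budget $m$ on the pair $(\nabla a,f)$ as every other $\beta\neq0$ term, so your $\beta\neq0$ bullet already covers it. Your worry does point at something real, but it applies to all $\beta\neq0$. To get $\norm f_{L^\infty}$ on the right, rather than $\norm{\nabla f}_{L^\infty}$ or a second high norm of $f$, an $H^2\hookrightarrow L^\infty$ low--high split is not enough; you need the Gagliardo--Nirenberg form of the tame estimate. Let $D^i$ denote any derivative of order $i$, and take $g=\nabla a$, $h\in\{f,J_\nu f\}$, $i=\abs\beta-1$, $j=\abs\gamma-\abs\beta+1\ge1$ and $k=i+j=\abs\gamma$. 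For $i\ge1$,
\[
\norm{D^ig\,D^jh}_{L^2}
\le\norm{D^ig}_{L^{2k/i}}\norm{D^jh}_{L^{2k/j}}
\le C\bigl(\norm g_{L^\infty}\norm h_{H^k}\bigr)^{j/k}
\bigl(\norm g_{H^k}\norm h_{L^\infty}\bigr)^{i/k},
\]
while for $i=0$ the bound $\norm g_{L^\infty}\norm h_{H^k}$ is direct. Young's inequality then gives exactly the right-hand side of \eqref{eq:cutoff-commutator}, once you also use $\norm{J_\nu f}_{H^m}\le C\norm f_{H^m}$ and $\norm{J_\nu f}_{L^\infty}\le\norm K_{L^1}\norm f_{L^\infty}$. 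The last bound matters because the second piece of each commutator is $\pa^\beta a\,J_\nu\pa_k\pa^{\gamma-\beta}f$, not $\pa^\beta a\,\pa_k\pa^{\gamma-\beta}f$, so $L^2$-boundedness of $J_\nu$ alone is not enough.

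By contrast, the remedy you sketch does not close. Set $b=\pa^\gamma a$. You can integrate by parts inside $[J_\nu,b]\pa_kf$, or write $[J_\nu,b]\pa_kf=\pa_k[J_\nu,b]f-[J_\nu,\pa_kb]f$; the second option only reshuffles the difficulty into the first term. Either way you reach $\int(\pa_kK_\nu)(x-y)\bigl(b(y)-b(x)\bigr)f(y)\,\dd y$. The first-moment cancellation makes this uniform in $\nu$ only through $\abs{b(y)-b(x)}\le\norm{\nabla b}_{L^\infty}\abs{x-y}$. That requires $m+1$ derivatives of $a$ in $L^\infty$, which are not among the hypotheses. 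Splitting the integral instead costs $\norm{\pa_kK_\nu}_{L^1}=\nu\norm{\pa_kK}_{L^1}$. Drop that paragraph and let the interpolation estimate treat $\beta=\gamma$ together with the other $\beta\neq0$.
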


\begin{proof}
Expand in Fourier transform in $x_1$ and Fourier series in $x_2$.  On the
support of $1-\chi(\xi/\nu)$, the full frequency
$\langle\xi\rangle$ is bounded below by a fixed multiple of $\nu$.
Therefore
\[
 \langle\xi\rangle^{2r}|1-\chi(\xi/\nu)|^2
 \le C\nu^{2(r-q)}\langle\xi\rangle^{2q}.
\]
Integration and summation give the first estimate; boundedness of the symbol
gives the second.  The difference estimate follows from
$J_\nu-J_\mu=(I-J_\mu)-(I-J_\nu)$.

For \eqref{eq:cutoff-commutator}, write $J_\nu$ as convolution with
$K_\nu(x)=\nu^2K(\nu x)$ on the cylinder, with the periodic variable
understood by periodization.  In the order-zero case, integrate by parts in
the identity
\[
 [J_\nu,a]\partial_kf(x)
 =\int K_\nu(x-y)(a(y)-a(x))\partial_kf(y)\,\dd y.
\]
The first moment bound for $K_\nu$ cancels the factor produced by
$\nabla K_\nu$, giving
$\norm{[J_\nu,a]\partial_kf}_2
 \le C\norm{\nabla a}_\infty\norm f_2$ uniformly in $\nu$.
Apply $\pa^\alpha$, expand by Leibniz' rule, use this order-zero estimate on
the term with all derivatives on $f$, and use
\eqref{eq:relative-product} on the remaining terms.  This proves
\eqref{eq:cutoff-commutator}.
\end{proof}

The compactness passage in \Cref{prop:local-theory} uses only the following
consequence.  If $z_\nu$ is bounded in $L^\infty H^s$, Cauchy in
$C H^{s-2}$, and $\pa_tz_\nu$ is bounded in $L^\infty H^{s-1}$, then
interpolation gives strong convergence in $C H^{s'}$ for every $s'<s$;
in particular it recovers $C H^{s-1}$.  The limit is weakly continuous in
$H^s$, and products pass because one may choose $s'>2$.  Applying a spatial
mollifier directly to the limit equation, passing its commutators by the
tame estimates, and then letting the mollification scale tend to zero gives
the top-index integral energy identity.  Continuity of that energy and weak
$H^s$ continuity imply strong $H^s$ continuity as in
\eqref{eq:top-strong-continuity}. This convergence architecture succeeds strictly 
through frequency interpolation and energy continuity, circumventing any reliance 
on compact embedding in the unbounded $x_1$ direction.

\subsection{Constant dependence}\label{app:constants}

All constants in the profile construction may depend on
\[
  s,\ M,\ t_0,\ T,\ n_*,\ n^*,\ p_*,\
  \norm{p_i}_{C^{s+3M+6}(I^\sharp)},\ \delta_0,\text{ and }w_0,
\]
but not on $\eps$.  The stability constants also depend on the fixed
rarefaction family and the period of $\T$.  Smallness is chosen in the order
\[
  \text{state interval and pressure}\;\longrightarrow\;
  \text{profile bound}\;\longrightarrow\;
  \eta\;\longrightarrow\;\eta_0\;\longrightarrow\;\eps_0.
\]
This strict selection hierarchy prevents any implicit $\eps$ dependence from contaminating the lifespan.

\bibliography{1references}

@article{cordier2000quasineutral,
  title={Quasineutral limit of an Euler-Poisson system arising from plasma physics},
  author={Cordier, St{\'e}phane and Grenier, Emmanuel},
  journal={Communications in Partial Differential Equations},
  volume={25},
  number={5-6},
  pages={1099--1113},
  year={2000},
  publisher={Taylor \& Francis}
}

@article{wang2005quasineutral,
  title={Quasineutral limit of Euler--Poisson system with and without viscosity},
  author={Wang, Shu},
  journal={Communications in Partial Differential Equations},
  volume={29},
  number={3-4},
  pages={419--456},
  year={2005},
  publisher={Taylor \& Francis}
}

@article{peng2006quasi,
  title={Quasi-neutral limit of the non-isentropic Euler--Poisson system},
  author={Peng, Yue-Jun and Wang, Ya-Guang and Yong, Wen-An},
  journal={Proceedings of the Royal Society of Edinburgh Section A: Mathematics},
  volume={136},
  number={5},
  pages={1013--1026},
  year={2006},
  publisher={Royal Society of Edinburgh Scotland Foundation}
}

@article{slemrod2001quasi,
  title={Quasi-neutral limit for Euler-Poisson system},
  author={Slemrod, Marshall and Sternberg, Natalia},
  journal={Journal of Nonlinear Science},
  volume={11},
  number={3},
  pages={193--209},
  year={2001},
  publisher={Springer}
}

@article{gerard2013quasineutral,
  title={Quasineutral limit of the Euler-Poisson system for ions in a domain with boundaries},
  author={G{\'e}rard-Varet, David and Han-Kwan, Daniel and Rousset, Fr{\'e}d{\'e}ric},
  journal={Indiana University Mathematics Journal},
  pages={359--402},
  year={2013},
  publisher={JSTOR}
}

@article{gerard2014quasineutral,
  title={Quasineutral limit of the Euler-Poisson system for ions in a domain with boundaries II},
  author={G{\'e}rard-Varet, David and Han-Kwan, Daniel and Rousset, Fr{\'e}d{\'e}ric},
  journal={Journal de l’{\'E}cole polytechnique—Math{\'e}matiques},
  volume={1},
  pages={343--386},
  year={2014}
}

@article{luo2025stability1,
  title={On the stability of multi-dimensional rarefaction waves I: the energy estimates},
  author={Luo, Tian-Wen and Yu, Pin},
  journal={Annals of Mathematics},
  volume={202},
  number={2},
  pages={631--752},
  year={2025},
  publisher={Department of Mathematics, Princeton University Princeton, New Jersey, USA}
}

@article{luo2025stability2,
  title={On the stability of multi-dimensional rarefaction waves II: existence of solutions and applications to the Riemann problem},
  author={Luo, Tian-Wen and Yu, Pin},
  journal={Annals of Mathematics},
  volume={202},
  number={2},
  pages={753--855},
  year={2025},
  publisher={Department of Mathematics, Princeton University Princeton, New Jersey, USA}
}

@article{he2026extra,
  title={The Extra Vanishing Structure and Nonlinear Stability of Multi-Dimensional Rarefaction Waves: The Geometric Weighted Energy Estimates},
  author={He, Haoran and He, Qichen},
  journal={arXiv preprint arXiv:2603.05332},
  year={2026}
}

@article{jia2026multi,
  title={Multi-Dimensional Structural Stability of Mixed Riemann Configurations Containing Centered Rarefaction Waves and Surfaces of Discontinuities of Gas Dynamics},
  author={Jia, Jin and Luo, Tao},
  journal={arXiv preprint arXiv:2603.14696},
  year={2026}
}

@article{duan2015stability1,
  title={Stability of rarefaction waves of the Navier--Stokes--Poisson system},
  author={Duan, Renjun and Liu, Shuangqian},
  journal={Journal of Differential Equations},
  volume={258},
  number={7},
  pages={2495--2530},
  year={2015},
  publisher={Elsevier}
}

@article{duan2015stability2,
  title={Stability of the Rarefaction Wave of the Vlasov--Poisson--Boltzmann System},
  author={Duan, Renjun and Liu, Shuangqian},
  journal={SIAM Journal on Mathematical Analysis},
  volume={47},
  number={5},
  pages={3585--3647},
  year={2015},
  publisher={SIAM}
}

@article{degond2012numerical,
  title={Numerical approximation of the Euler-Poisson-Boltzmann model in the quasineutral limit},
  author={Degond, Pierre and Liu, Hailiang and Savelief, Dominique and Vignal, M-H},
  journal={Journal of Scientific Computing},
  volume={51},
  number={1},
  pages={59--86},
  year={2012},
  publisher={Springer}
}

@article{arun2025asymptotic,
  title={An asymptotic preserving scheme for the Euler-Poisson-Boltzmann system in the quasineutral limit},
  author={Arun, KR and Ghorai, Rahuldev},
  journal={Computers \& Mathematics with Applications},
  volume={185},
  pages={1--28},
  year={2025},
  publisher={Elsevier}
}

@article{ju2010quasi1,
  title={Quasi-neutral limit of the two-fluid Euler-Poisson system},
  author={Ju, Qiangchang and Li, Hailiang and Li, Yong and Jiang, Song},
  journal={Communications on Pure and Applied Analysis},
  volume={9},
  number={6},
  pages={1577--1590},
  year={2010},
  publisher={Communications on Pure and Applied Analysis}
}

@article{jiang2010quasi2,
  title={Quasi-neutral limit of the full bipolar Euler-Poisson system},
  author={Jiang, Song and Ju, QiangChang and Li, HaiLiang and Li, Yong},
  journal={Science China Mathematics},
  volume={53},
  number={12},
  pages={3099--3114},
  year={2010},
  publisher={Springer}
}

@article{alves2024zero,
  title={Zero-electron-mass and quasi-neutral limits for bipolar Euler--Poisson systems: NJ Alves and AE Tzavaras},
  author={Alves, Nuno J and Tzavaras, Athanasios E},
  journal={Zeitschrift f{\"u}r angewandte Mathematik und Physik},
  volume={75},
  number={1},
  pages={17},
  year={2024},
  publisher={Springer}
}

@article{pu2014quasineutral1,
  title={Quasineutral limit of the pressureless Euler--Poisson equation},
  author={Pu, Xueke},
  journal={Applied Mathematics Letters},
  volume={30},
  pages={33--37},
  year={2014},
  publisher={Elsevier}
}

@article{pu2016quasineutral2,
  title={Quasineutral limit of the pressureless Euler-Poisson equation for ions},
  author={Pu, Xueke and Guo, Boling},
  journal={Quarterly of Applied Mathematics},
  volume={74},
  number={2},
  pages={245--273},
  year={2016},
  publisher={Brown University}
}

@article{pu2016quasineutral3,
  title={QUASINEUTRAL LIMIT OF THE EULER-POISSON SYSTEM UNDER STRONG MAGNETIC FIELDS.},
  author={Pu, Xueke},
  journal={Discrete \& Continuous Dynamical Systems-Series S},
  volume={9},
  number={6},
  pages={2095},
  year={2016}
}

@article{brenier2000convergence,
  title={Convergence of the Vlasov-Poisson system to the incompressible Euler equations},
  author={Brenier, Yann},
  journal={Communications in Partial Differential Equations},
  volume={25},
  number={3-4},
  pages={737--754},
  year={2000},
  publisher={Taylor \& Francis}
}

@article{han2011quasineutral,
  title={Quasineutral limit of the Vlasov-Poisson system with massless electrons},
  author={Han-Kwan, Daniel},
  journal={Communications in Partial Differential Equations},
  volume={36},
  number={8},
  pages={1385--1425},
  year={2011},
  publisher={Taylor \& Francis}
}

@article{han2014quasineutral,
  title={The quasineutral limit of the Vlasov-Poisson equation in Wasserstein metric},
  author={Han-Kwan, Daniel and Iacobelli, Mikaela},
  journal = {Communications in Mathematical Sciences},
  volume  = {15},
  number  = {2},
  pages   = {481--509},
  year    = {2017}
}

@article{han2017quasineutral,
  title={Quasineutral limit for Vlasov--Poisson via Wasserstein stability estimates in higher dimension},
  author={Han-Kwan, Daniel and Iacobelli, Mikaela},
  journal={Journal of Differential Equations},
  volume={263},
  number={1},
  pages={1--25},
  year={2017},
  publisher={Elsevier}
}

@article{crispel2007asymptotic,
  title={An asymptotic preserving scheme for the two-fluid Euler--Poisson model in the quasineutral limit},
  author={Crispel, Pierre and Degond, Pierre and Vignal, Marie-H{\'e}l{\`e}ne},
  journal={Journal of Computational Physics},
  volume={223},
  number={1},
  pages={208--234},
  year={2007},
  publisher={Elsevier}
}

@article{degond2008analysis,
  title={Analysis of an asymptotic preserving scheme for the Euler--Poisson system in the quasineutral limit},
  author={Degond, Pierre and Liu, Jian-Guo and Vignal, Marie-H{\'e}l{\`e}ne},
  journal={SIAM Journal on Numerical Analysis},
  volume={46},
  number={3},
  pages={1298--1322},
  year={2008},
  publisher={SIAM}
}

@article{vignal2010boundary,
  title={A boundary layer problem for an asymptotic preserving scheme in the quasi-neutral limit for the Euler--Poisson system},
  author={Vignal, Marie H{\'e}l{\`e}ne},
  journal={SIAM Journal on Applied Mathematics},
  volume={70},
  number={6},
  pages={1761--1787},
  year={2010},
  publisher={SIAM}
}

@article{alinhac1989existence,
  title={Existence d'ondes de rarefaction pour des systems quasi-lineaires hyperboliques multidimensionnels},
  author={Alinhac, Serge},
  journal={Communications in partial differential equations},
  volume={14},
  number={2},
  pages={173--230},
  year={1989},
  publisher={Taylor \& Francis}
}

@article{matsumura1986asymptotics,
  title={Asymptotics toward the rarefaction waves of the solutions of a one-dimensional model system for compressible viscous gas},
  author={Matsumura, Akitaka and Nishihara, Kenji},
  journal={Japan Journal of Applied Mathematics},
  volume={3},
  number={1},
  pages={1--13},
  year={1986},
  publisher={Springer}
}

@article{liu1988nonlinear,
  title={Nonlinear stability of rarefaction waves for compressible Navier Stokes equations},
  author={Liu, Tai-Ping and Xin, Zhouping},
  journal={Communications in mathematical physics},
  volume={118},
  number={3},
  pages={451--465},
  year={1988},
  publisher={Springer}
}

@article{xin1993zero,
  title={Zero dissipation limit to rarefaction waves for the one-dimensional Navier-Stokes equations of compressible isentropic gases},
  author={Xin, Zhouping},
  journal={Communications on pure and applied mathematics},
  volume={46},
  number={5},
  pages={621--665},
  year={1993},
  publisher={Wiley Online Library}
}

\end{document}